\title{\vspace{-0.7cm}Hamiltonian actions of unipotent groups on compact\\ K\"ahler manifolds}
\author{\vspace{0cm} Daniel Greb and Christian Miebach}
\institution{Essener Seminar f\"ur Algebraische Geometrie und Arithmetik, 
Fakult\"at f\"ur Ma\-the\-matik, Universit\"at Duisburg--Essen, 45117 Essen, 
Germany}\\
\email{daniel.greb@uni-due.de}}\\
\institution{Universit\'e du Littoral C\^ote d'Opale, EA 2797 - LMPA - Laboratoire de 
math\'ematiques pures et appliqu\'ees Joseph Liouville, F-62228 Calais, 
France}\\
\email{christian.miebach@univ-littoral.fr}}
\date{\vspace{-5ex}} 
\journal{\'Epijournal de G\'eom\'etrie Alg\'ebrique} 
\numberwithin{equation}{numsection}
\newtheorem{thm}{Theorem}[section]
\newtheorem{prop}[thm]{Proposition}
\newtheorem{lem}[thm]{Lemma}
\newtheorem{cor}[thm]{Corollary}
\newenvironment{thm*}{\paragraph{Theorem}}{\bigskip}
\newtheorem{defn}[thm]{Definition}
\newtheorem{ex}[thm]{Example}
\newtheorem{rem}[thm]{Remark}
\newcommand{\mbb}[1]{\mathbb{#1}}
\newcommand{\wh}[1]{\hat{#1}}
\newcommand{\ol}[1]{\overline{#1}}
\newcommand{\lie}[1]{{\mathfrak{#1}}}
\newcommand{\abs}[1]{\lvert #1\rvert}
\newcommand{\norm}[1]{\lVert #1\rVert}
\newcommand{\hq}{/\hspace{-0.09cm}/}
\DeclareMathOperator{\id}{id}
\DeclareMathOperator{\Ad}{Ad}
\DeclareMathOperator{\Aut}{Aut}
\DeclareMathOperator{\Alb}{Alb}
\DeclareMathOperator{\Lie}{Lie}
\DeclareMathOperator{\reg}{reg}
\DeclareMathOperator{\cone}{cone}
\DeclareMathOperator{\Spec}{Spec}
\DeclareMathOperator{\inn}{int}
\newcommand{\acts}{\mbox{\,\raisebox{0.26ex}{\tiny{$\bullet$}}\,}}
\begin{document}


\maketitle



\begin{prelims}

\vspace{-0.55cm}

\def\abstractname{Abstract}
\abstract{We study meromorphic actions of unipotent complex Lie groups on compact K\"ahler 
manifolds using moment map techniques. We introduce natural stability conditions 
and show that sets of semistable points are Zariski-open and admit geometric 
quotients that carry compactifiable K\"ahler structures obtained by symplectic 
reduction. The relation of our complex-analytic theory to the work of 
Doran--Kirwan regarding the Geometric Invariant Theory of unipotent group 
actions on projective varieties is discussed in detail.}

\keywords{Unipotent algebraic groups; automorphisms of compact K\"ahler manifolds;
K\"ahler metrics on fibre bundles and homogeneous spaces; moment maps;
symplectic reduction; Geometric Invariant Theory}

\MSCclass{32M05; 32M10; 32Q15; 14L24; 14L30; 37J15; 53D20}

\vspace{0.15cm}

\languagesection{Fran\c{c}ais}{%

\textbf{Titre. Actions hamiltoniennes des groupes unipotents sur les vari\'et\'es k\"ahl\'eriennes compactes} \commentskip \textbf{R\'esum\'e.} Nous \'etudions les actions m\'eromorphes de groupes de Lie complexes unipotents sur les vari\'et\'es k\"ahl\'eriennes compactes en utilisant des techniques de type application moment. Nous introduisons des conditions de stabilit\'e naturelles et nous montrons que l'ensemble des points semi-stables forme un ouvert de Zariski et admet des quotients g\'eom\'etriques munis de structures k\"ahl\'eriennes compactifiables obtenues par r\'eduction symplectique. Le parall\`ele entre notre th\'eorie analytique complexe et les travaux de Doran--Kirwan concernant la Th\'eorie G\'eom\'etrique des Invariants des actions de groupes unipotents sur les vari\'et\'es projectives est discut\'e en d\'etails.}

\end{prelims}


\newpage

\setcounter{tocdepth}{1} \tableofcontents

\section*{Introduction}

\setcounter{numsection}{0}

Since the fundamental work of Mumford~\cite{MumfordGIT}, Kirwan~\cite{Kir2}, 
Guillemin--Sternberg \cite{GS_Mult}, and others, moment map geometry has become 
one of the most important tools for studying actions of \emph{complex-reductive 
Lie groups} $G=K^{\mathbb{C}}$ on K\"ahler manifolds. Given a Hamiltonian 
$G$-manifold, i.e., a K\"ahler $G$-manifold $(X, \omega_X)$ admitting a moment 
map $\mu\colon X \to \Lie(K)^*$ for the $K$-action, by the work of 
Heinzner--Loose~\cite{ReductionOfHamiltonianSpaces}, 
Heinzner--Huckleberry--Loose \cite{Extensionofsymplectic}, and 
Sjamaar~\cite{Sj2}, the set of $\mu$-semistable points $X^{ss}_G(\mu) := \{x \in 
X \mid \overline{G\acts x} \cap \mu^{-1}(0) \neq \emptyset \}$ admits an 
analytic Hilbert quotient, i.e., a $G$-invariant holomorphic Stein map 
$\pi\colon X^{ss}_G(\mu)\to X^{ss}_G(\mu)\hq G$ onto a K\"ahlerian complex 
space $X^{ss}_G(\mu)\hq G$ with structure sheaf 
$\mathscr{O}_{X^{ss}_G(\mu)/\negthickspace / 
G}=(\pi_*\mathscr{O}_{X^{ss}_G(\mu)})^G$; see also the survey \cite{HH2}. If $X$ 
is projective algebraic, and if the K\"ahler form $\omega_X$ as well as the 
moment map $\mu$ are induced by an embedding of $X$ into some projective space, 
both the set $X^{ss}_G(\mu)$ of semistable points and the quotient 
$X^{ss}_G(\mu)\hq G$ are the ones constructed via Geometric Invariant Theory 
(GIT). This theory crucially uses the fact that complex-analytic objects on $X$ 
can be averaged over the compact group $K$ to produce $G$-invariant objects, 
which then can be used to construct the quotient. 

On the other hand, actions of \emph{unipotent groups} on (compact) K\"ahler 
manifolds appear naturally in a number of contexts and play an important role in 
K\"ahler geometry. By a fundamental result of Lichnerowicz and Matsushima, a 
given compact K\"ahler manifold $X$ can admit a constant scalar curvature 
K\"ahler metric only if the Lie sub-algebra of all holomorphic vector fields 
having a zero is reductive, see e.g.~\cite[Proposition~4.18 and 
Remark~4.12]{Sze}. In other words, unipotent subgroups of $\mathrm{Aut}(X)$ 
appear as obstructions to the existence of such metrics. In a related direction, 
the paper \cite{CD} proposes a way to produce canonical destabilising 
test-configurations (showing $K$-unstability of $X$) from non-reductive 
subgroups of the automorphism group. 

Motivated by these and other moduli-theoretic questions, Doran and Kirwan in 
\cite{DorKir} started to study actions of unipotent algebraic groups $N$ on 
projective manifolds $X$ linearised in very ample line bundles, using 
invariant-theoretic methods on the one hand and Geometric Invariant Theory for 
related actions of reductive groups $G$ containing $N$ on twisted products $G 
\times_N X$ on the other hand. When thinking about the relation of their work to 
K\"ahler geometry and moment maps, one encounters three basic questions: 

\begin{enumerate}
 \item[(a)] What is the correct analogue of a ``linear'' action in K\"ahler 
geometry? 
 \item[(b)] Given a compact K\"ahler $N$-manifold $X$, how can one produce 
K\"ahler metrics on the non-compact twisted product $G\times_N X$ ?
 \item[(c)] If (b) has a positive answer, can one use moment map geometry on 
the non-compact Hamiltonian $G$-manifold to produce quotients for the $N$-action 
on $X$ with good geometric and complex-analytic properties?
\end{enumerate}

With a different set of problems in mind, Question (a) has been solved a rather 
long time ago by Fujiki~\cite{Fuj}, Lieberman~\cite{Lie}, and 
Sommese~\cite{Som}: meromorphic actions and more generally actions for which the 
induced action on the Albanese torus is trivial were already called ``linear'' 
or ``projective'' in \emph{loc.~cit.}, and it turns out that also with a view 
towards moment map geometry and K\"ahlerian quotient theory, these are the 
correct conditions to impose, see~Remark~\ref{rem:reductive_equivalence}. For 
actions of reductive Lie groups this was observed by Huckleberry--Wurzbacher 
\cite[Remark on page 262]{HW} and Fujiki~\cite[Lemma 2.1]{Fuj2}. Note that the 
Lie algebra of the group of all automorphisms acting trivially on Albanese 
consists exactly of those holomorphic vector fields having a zero, e.g.~see 
\cite[Proposition 6.8]{Fuj}, which connects the question of ``linearity'' to the 
one concerning the existence of extremal K\"ahler metrics discussed above.

\subsection*{Main results}

As our first contribution, using a criterion of Blanchard and and properties of 
unipotent groups, with respect to Question (b) we prove the following result.

\paragraph{Theorem (Theorem~\ref{Thm:ExistenceExtensions}).}
{\em Let $N$ be a unipotent subgroup of the simply-connected complex semisimple Lie 
group $G$. For a connected compact K\"ahler manifold $(X, \omega_X)$ endowed with a 
holomorphic $N$-action the following statements are equivalent.
\begin{enumerate}[\rm (1)]
\item There exists a Hamiltonian $G$-extension $(Z,\omega_Z)$ of the $N$-action 
on $X$.
\item The $N$-action on $X$ is meromorphic.
\item The twisted product $G\times_NX$ is K\"ahler.
\end{enumerate}
Moreover, given a meromorphic $N$-action on $X$ we can always find a 
Hamiltonian $G$-extension that is a $G$-equivariant compactification of 
$G\times_NX$.}

\bigskip

Here, a \emph{Hamiltonian $G$-extension} of the $N$-action on
$(X,\omega_X)$ consists of a connected compact Hamiltonian $G$-manifold 
$(Z,\omega_Z)$ and an $N$-equivariant embedding $\iota\colon X\hookrightarrow Z$ 
such that for the de Rham cohomology classes associated with the K\"ahler forms 
we have $\iota^*[\omega_Z]=[\omega_X]$. This theorem links the condition 
``meromorphic'' to moment map geometry of the $G$-action on $G \times_N X$, and 
therefore opens the door to using the complex-reductive theory for the 
construction of quotients of $X$ with respect to the $N$-action.

Once the existence of Hamiltonian $G$-extensions is established, these can be 
used to define the set of \emph{$N$-semistable points} $X^{ss}_N[\omega_X]$ with 
respect to the given K\"ahler class $[\omega_X] \in H^2(X, \mathbb{R})$, after
one has chosen a suitable K\"ahler form on the quasi-affine 
homogeneous space $G/N$. While we show in Theorem~\ref{Thm:Independence}, using 
Hodge-theoretic arguments as well as the relation of $G$-semistability to 
$K$-invariant strictly plurisubharmonic exhaustion functions, that this 
definition does not depend on the chosen $G$-extension, the choice of metric on 
$G/N$ influences semistability in a subtle way, as we explore in great detail in 
Section~\ref{subsect:discussion_of_choice}. The problems that occur are closely 
related to the ones encountered in the algebraic situation when searching for 
various kinds of ``reductive envelopes'', cf.~\cite[Sections~5.2 and 
5.3]{DorKir}, and can be traced back to the fact that in general there is no 
choice of metric on $G/N$ so that the corresponding moment map is 
proper or admissible in the sense of \cite{Sj2}. A detailed comparison of the 
moment map approach introduced here and the GIT approach of Doran--Kirwan is 
presented in Section~\ref{subsect:algebraic_actions}.

Finally, regarding Question (c), we establish that the set of $N$-semistable 
points indeed has a number of very desirable complex-geometric properties. The 
following result summarises the content of 
Theorem~\ref{thm:existence_of_geometric_quotient}, 
Proposition~\ref{prop:compactI}, Theorem~\ref{thm:Zopen}, and 
Theorem~\ref{thm:reducedKaehler}. 

\paragraph{Theorem.}
{\em  Let $(X, \omega_X)$ be a compact K\"ahler manifold endowed with a meromorphic 
  $N$-action. Then, the following holds.
  \begin{enumerate}
   \item[\rm (a)] The set $X^{ss}_N[\omega_X]$ of semi\-stable points admits a 
geometric quotient $\pi\colon X^{ss}_N[\omega_X] \to X^{ss}_N[\omega_X] / N$ by 
the $N$-action. In fact, $\pi$ is a principal $N$-fibre bundle and 
$X^{ss}_N[\omega_X] / N =:Q$ is smooth. 
   \item[\rm (b)] The definition of semistability naturally induces an open 
embedding $\phi\colon Q=X^{ss}_N[\omega_X] / N \hookrightarrow \overline{Q}$ of $Q$ into a compact 
complex space $\overline{Q}$ such that $\overline{Q} \setminus \phi( Q )$ is analytic.
 \item[\rm (c)] The set $X^{ss}_N[\omega_X]$ of semi\-stable points is Zariski-open 
in $X$. Moreover, the quotient map $\pi\colon X^{ss}_N[\omega_X] \to 
Q$ extends to a meromorphic map $\pi\colon X \dasharrow 
\overline{Q}$.
 \item[\rm (d)] There exists a K\"ahler structure $\omega_{\ol{Q}}$ on $\ol{Q}$ 
whose restriction $\omega_Q = \omega_{\ol{Q}}|_Q^{}$ to $Q \hookrightarrow 
\ol{Q}$ is smooth and fulfils
 \[ [\pi^*{\omega_Q}] = [\omega_X|_{X^{ss}_N[\omega_X]}] 
 \in H^2(X^{ss}_N[\omega_X],\, \mathbb{R}).\]
  \end{enumerate}}

\subsection*{Future directions}

With the fundamental results of a K\"ahlerian quotient theory for meromorphic 
actions of unipotent groups established, interesting questions include whether 
despite the difficulties presented by the examples collected in Section 4.3 
under certain additional conditions the set of semistable points can be shown to 
be independent of further choices, whether in certain applications there are 
natural K\"ahler metrics on the homogeneous space $G/N$ leading to a quotient 
that is as well-adapted as possible to the given geometric situation at hand, 
and whether given a compact K\"ahler manifold $(X, \omega_X)$ with non-trivial 
non-reductive part in the automorphism group (obstructing the existence of 
special metrics) one can use the K\"ahlerian quotient theory for this unipotent 
group in order to produce a K\"ahlerian complex space/manifold where said 
obstruction vanishes and special metrics might exist. 

\subsection*{Organisation of the paper}

In the first two sections we review basic facts about meromorphic actions and 
GIT on K\"ahler manifolds via moment maps. In Section~\ref{Section:Extensions} 
we prove our first main result, Theorem~\ref{Thm:ExistenceExtensions}. The 
difficulties one encounters in defining the set of semistable points for 
holomorphic actions of unipotent groups, the actual definition of 
semistability, as well as fundamental properties of the set of semistable points 
are discussed in Section~\ref{Section:Semistable}, while in the final 
Section~\ref{sect:properties} we prove the second main result concerning the 
properties of semistable quotients for unipotent group actions.

\subsection*{Acknowledgements}

The first author wants to thank the Laboratoire de Math\'ematiques Pures et 
Appliqu\'ees at Universit\'e du Littoral C\^ote d'Opale as well as FRIAS for 
hospitality during research visits in the spring of 2016 and 2018, 
respectively. During the preparation of this paper, he has been partially 
supported by the DFG-Collaborative Research Center SFB/TR 45 ``Periods, moduli 
spaces and arithmetic of algebraic varieties''. The second author gratefully 
acknowledges the hospitality of the mathematical departments of the universities 
of Duisburg-Essen and Freiburg as well as of the FRIAS during the spring of 
2016, 2017 and 2018. Both authors would like to thank Peter Heinzner for 
fruitful and interesting discussions.

\subsection*{Global conventions}

We work over the field $\mathbb{C}$ of complex numbers. A \emph{complex space} 
is a reduced complex space with countable topology. \emph{Analytic
subsets} are assumed to be closed. \emph{Manifolds} are assumed to be connected.

\section{Meromorphic group actions}\label{Section:mero}

Let us review some facts about meromorphic group actions from~\cite{Fuj} that 
we will use freely in the following. Lieberman obtained essentially the same 
results in \cite{Lie}. In this section, $G$ denotes a complex Lie group. 

A \emph{meromorphic structure} on the complex Lie group $G$ is a compactification 
$\ol{G}$ together with a meromorphic mapping 
$\ol{\mu}\colon\ol{G}\times\ol{G}\to\ol{G}$ which extends the group 
multiplication of $G$ such that $\ol{\mu}$ is holomorphic on 
$(G\times\ol{G})\cup(\ol{G}\times G)$. In other words, the compactification 
$\ol{G}$ is $(G\times G)$-equivariant. Moreover, the map $G\to G$, $g\mapsto 
g^{-1}$, extends to a meromorphic map $\ol{G}\to\ol{G}$.

Let us fix a meromorphic structure on $G$. A complex subgroup $H$ of $G$ is 
\emph{meromorphic} if the topological closure $\ol{H}$ of $H$ in $\ol{G}$ is 
analytic.

\begin{rem}{\rm
If $G$ is linear algebraic, we may and will choose $\ol{G}$ to be a 
projective manifold. The meromorphic subgroups of $G$ are then precisely the 
algebraic subgroups of $G$.}
\end{rem}

Let $X$ be a complex space endowed with a holomorphic $G$-action. This 
$G$-action is \emph{meromorphic} if the action map $G\times X\to X$ extends to 
a meromorphic map $\ol{G}\times X\to X$. For meromorphic actions on compact 
K\"ahler manifolds (in fact, on reduced compact complex spaces of class 
$\mathcal{C}$) we have the following quotient theorem, 
see~\cite[Theorem~4.1]{Fuj}.

\begin{thm}[Quotient Theorem]
Let $X$ be a compact K\"ahler manifold on which $G$ acts meromorphically. Then, 
there exist a compact complex space $Y$ and a $G$-invariant surjective 
meromorphic map $\pi\colon X\dasharrow Y$ such that the following universal 
property is satisfied. If $\pi'\colon X\dasharrow Y'$ is another $G$-invariant 
meromorphic map to a compact complex space $Y'$, then there exists a unique 
meromorphic map $m\colon Y \dasharrow Y'$ such that $m\circ\pi=\pi'$. In this 
situation we call $\pi\colon X\dasharrow Y$ a \emph{meromorphic quotient} for 
the $G$-action on $X$.
\end{thm}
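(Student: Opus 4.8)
The plan is to realise $Y$ as a suitable compact subspace of the Barlet cycle space $\mathcal{B}(X)$ of $X$, assembled from the closures of the general $G$-orbits. Since the $G$-action is meromorphic, every orbit closure $\overline{G\cdot x}$ is an irreducible compact analytic subset of $X$, and the point where the K\"ahler hypothesis enters decisively is the Fujiki--Lieberman theory of cycle spaces: because $X$ is compact K\"ahler (hence of class $\mathcal{C}$), the irreducible components of $\mathcal{B}(X)$ are again compact complex spaces of class $\mathcal{C}$, and a holomorphic map into $\mathcal{B}(X)$ defined on a dense open subset of $X$ extends to a meromorphic map on all of $X$. We may assume $G$ connected; this is convenient but not essential, as the construction below only refers to the closures $\overline{G\cdot x}$ of the $G$-orbits.

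\emph{Construction of $\pi$.} First I would produce a $G$-invariant Zariski-dense open subset $U\subseteq X$ over which the orbit closures vary uniformly: on $U$ the stabiliser has minimal dimension, the orbits have maximal dimension, and the assignment $x\mapsto\overline{G\cdot x}$ is an analytic family of cycles on $X$, hence a holomorphic map $\tau\colon U\to\mathcal{B}(X)$. (This is the technical core, and it rests on the structure theory of meromorphic actions: the graph of the meromorphic extension $\ol{G}\times X\dasharrow X$ of the action restricts, over $U$, to an honest analytic family of subvarieties of $X$.) By the compactness statement quoted above, $\tau$ extends to a meromorphic map $X\dasharrow\mathcal{B}(X)$; let $\Gamma\subseteq X\times\mathcal{B}(X)$ be the closure of its graph. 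Then $\Gamma$ is an irreducible compact analytic set, the projection $\Gamma\to X$ is bimeromorphic (being an isomorphism over $U$), and, by Remmert's proper mapping theorem, the image $Y$ of the projection $\Gamma\to\mathcal{B}(X)$ is a compact complex space. Define $\pi\colon X\dasharrow Y$ as the composition $X\dasharrow\Gamma\to Y$. It is meromorphic and surjective, and it is $G$-invariant because invariance can be tested on the dense set $U$, on which the fibres of $\pi$ are precisely the orbit closures.

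\emph{Universal property.} Given a $G$-invariant meromorphic map $\pi'\colon X\dasharrow Y'$ to a compact complex space, form $(\pi,\pi')\colon X\dasharrow Y\times Y'$ and let $W\subseteq Y\times Y'$ be the closure of the image of the locus where both maps are defined; this is an irreducible compact analytic set with projections $p\colon W\to Y$ (surjective and proper) and $q\colon W\to Y'$. The claim is that $p$ is bimeromorphic. For a general $y\in Y$ the fibre $\pi^{-1}(y)$ is the closure $\overline{G\cdot x}$ of a single orbit; since $\pi'$ is $G$-invariant, it is constant on $G\cdot x$, and a meromorphic map that is constant on a dense subset of an irreducible analytic set is constant on that set wherever it is defined; hence $\pi'$ contracts the general fibre of $\pi$ to a point of $Y'$, so $p^{-1}(y)$ is a single point for general $y$. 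Thus $m:=q\circ p^{-1}\colon Y\dasharrow Y'$ is meromorphic and satisfies $m\circ\pi=\pi'$. Uniqueness follows because any two solutions agree on the dense subset $\pi(U)\subseteq Y$.

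I expect the main obstacle to be the first step in the construction of $\pi$: showing that the general orbit closures genuinely form an analytic family with a compact, class-$\mathcal{C}$ parameter space sitting inside $\mathcal{B}(X)$. This requires both the structure theory of meromorphic group actions (so that orbit closures are analytic and depend holomorphically on the point over a Zariski-open set) and, essentially, the full strength of the Fujiki--Lieberman theory (Bishop-type compactness of cycle spaces of class-$\mathcal{C}$ manifolds) to guarantee that the graph closure $\Gamma$ is analytic and compact, so that $\pi$ is honestly meromorphic rather than merely holomorphic on an open subset. Granting this, surjectivity, $G$-invariance, and the universal property are formal consequences of the fact that the general fibre of $\pi$ is a single orbit closure.
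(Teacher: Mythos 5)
The paper does not prove this theorem itself but quotes it from Fujiki \cite[Theorem~4.1]{Fuj}, and your cycle-space construction --- sending a general point to its orbit closure $\overline{G\acts x}$, extending $x\mapsto\overline{G\acts x}$ to a meromorphic map into the Barlet space via the Fujiki--Lieberman compactness theory for class-$\mathcal{C}$ manifolds, and deducing surjectivity, invariance, and the universal property from the fact that the general fibre is a single orbit closure --- is precisely Fujiki's argument, which the paper itself records in the more precise Proposition~\ref{Chow}. So your proposal is correct and follows essentially the same route as the source the paper relies on.
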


\begin{ex}{\rm
Let $H$ be an algebraic subgroup of $G$. Then there exists a $G$-equivariant 
smooth projective compactification $\ol{G/H}$ of $G/H$. Moreover, the 
$H$-principal bundle $\pi\colon G\to G/H$ extends to a rational map $\ol{\pi} 
\colon\ol{G}\dasharrow\ol{G/H}$. Then $\ol{\pi}$ is a meromorphic quotient for the 
$H$-action on $\ol{G}$.}
\end{ex}

In fact, by inspecting Fujiki's construction, one obtains the following more 
precise result that has appeared in several places in the literature; see for 
example \cite[Theorem 0.2.2]{BBS}, as well as \cite[Proposition 3.1]{Greb}, 
\cite[Section 3]{Hu}, and the references given there for the analogous result in 
the algebraic category.

\begin{prop}\label{Chow}
Let $X$ be a compact K\"ahler manifold on which $G$ acts meromorphically. Then, 
there exists an irreducible, compact analytic subset $Q_F$ of the cycle space of 
$X$, a $G$-invariant meromorphic map $\pi_F\colon X \dasharrow Q_F$, and a 
$G$-invariant Zariski-open subset $U_F \subset \mathrm{dom}(\pi_F)$, called a 
\emph{Fujiki set} of $X$, such that
\begin{enumerate}[\rm (1)]
\item $\pi_F\colon X \dasharrow Q_F$ is a meromorphic quotient for the 
$G$-action on $X$, 
\item $U_F \subset X_{\mathrm{gen}} := \{x \in X \mid \dim G\acts x = m \text{ 
is maximal}\}$,
\item for all $u \in U_F$, we have $\pi_F (u) = \overline{G\acts u}$, 
considered as a (reduced) cycle of $X$,
\item $\pi_F(U_F)$ is smooth and Zariski-open in $Y$, and the restriction 
$\pi_F|_{U_F}\colon U_F \to \pi_F(U_F)$ is a geometric quotient.
\end{enumerate}
\end{prop}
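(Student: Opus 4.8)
The plan is to refine the construction behind the Quotient Theorem (Fujiki's Theorem~4.1) rather than invoke it as a black box, extracting from it the explicit description of the quotient as a subset of the cycle space. First I would recall that for a meromorphic $G$-action on a compact Kähler manifold $X$, the generic orbit closures $\overline{G \acts x}$ for $x \in X_{\mathrm{gen}}$ are irreducible compact analytic subsets of $X$ of the fixed maximal dimension $m$, and — using that $X$ is of class $\mathcal{C}$ — they define points in the cycle space (Barlet space) $\mathcal{C}_m(X)$. The assignment $x \mapsto [\overline{G \acts x}]$ is defined on a $G$-invariant Zariski-open subset of $X_{\mathrm{gen}}$ and, by the properness and analyticity properties of the graph of the meromorphic action map $\overline{G} \times X \dashrightarrow X$, extends to a meromorphic map $\pi_F \colon X \dashrightarrow \mathcal{C}_m(X)$; here one uses that the family of orbit closures, over the locus where it is a well-defined analytic family, gives a holomorphic classifying map, whose image has compact irreducible closure $Q_F \subseteq \mathcal{C}_m(X)$. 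I would then define $U_F$ to be the largest $G$-invariant Zariski-open subset of the domain of definition of $\pi_F$ on which $\pi_F$ restricts to a submersion onto its image with fibres exactly the orbit closures; nonemptiness of $U_F$ follows from generic flatness/semicontinuity applied to the universal cycle over $Q_F$.

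With this setup, properties (2) and (3) hold essentially by construction: $U_F \subseteq X_{\mathrm{gen}}$ because only maximal-dimensional orbits give $m$-cycles, and $\pi_F(u) = \overline{G \acts u}$ as a reduced cycle is exactly how the map was defined on the generic locus. For (4) I would argue that after shrinking $Q_F$ to a Zariski-open smooth subset $\pi_F(U_F)$ — using that a reduced complex space has a Zariski-open dense smooth locus and that the $G$-invariant bad locus is analytic hence can be removed $G$-equivariantly — the restriction $\pi_F|_{U_F} \colon U_F \to \pi_F(U_F)$ is a surjective submersion whose fibres are precisely the $G$-orbit closures through points of $U_F$. To upgrade "fibres are orbit closures" to "$\pi_F|_{U_F}$ is a geometric quotient", I would further shrink $U_F$ so that over it every orbit is closed in $U_F$ (possible because the non-closed-orbit locus is a proper $G$-invariant analytic subset after the reductions already made) and each fibre is a single orbit; then $\pi_F|_{U_F}$ has the correct fibres, is open (being a submersion), and carries the quotient topology, so it is a geometric quotient.

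Finally, property (1) — that $\pi_F \colon X \dashrightarrow Q_F$ is a meromorphic quotient, i.e. satisfies the universal property of the Quotient Theorem — I would obtain by comparing $\pi_F$ with Fujiki's meromorphic quotient $\pi \colon X \dashrightarrow Y$: both are $G$-invariant meromorphic maps to compact complex spaces, both restrict to geometric quotients on dense $G$-invariant Zariski-open sets, and any two such maps are bimeromorphic over the common generic locus (each factors through the other by the universal property applied in both directions), so $Q_F$ is bimeromorphic to $Y$ and inherits the universal property; alternatively one verifies the universal property directly, noting that a $G$-invariant meromorphic $\pi' \colon X \dashrightarrow Y'$ is constant on generic orbit closures and hence factors through the classifying map to the cycle space.

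The main obstacle I expect is the precise construction of the meromorphic map $\pi_F$ into the cycle space together with the verification that the orbit-closure family is analytic over a Zariski-open set — this requires knowing that the orbit closures of a meromorphic action form an analytic family (which is where class $\mathcal{C}$ and the compactness of $\overline{G}$ enter, via the analyticity of the closure of the graph of the action inside $\overline{G} \times X \times X$ and its image cycles), and that the resulting classifying map has closed analytic graph so that its image has compact analytic (not merely constructible) closure $Q_F$. Once this analytic family and its classifying map are in hand, items (2)–(4) are a matter of shrinking to good $G$-invariant Zariski-open loci, and (1) follows from the universal property by the bimeromorphic comparison above.
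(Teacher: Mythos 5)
Your proposal is correct in outline and takes exactly the route the paper intends: the paper gives no proof of Proposition~\ref{Chow}, stating only that it follows by inspecting Fujiki's construction and citing \cite[Theorem~0.2.2]{BBS}, \cite[Proposition~3.1]{Greb} and \cite[Section~3]{Hu}, and your sketch is a faithful unpacking of that construction (generic orbit closures as points of the cycle space, a meromorphic classifying map with compact irreducible image $Q_F$, and shrinking to a good $G$-invariant Zariski-open locus where orbits of maximal dimension are closed so that fibres become single orbits). The technical points you flag as the main obstacle --- analyticity of the orbit-closure family and compactness of the graph in $X\times\mathscr{C}_m(X)$, which rest on the K\"ahler hypothesis via the compactness theorem for cycle spaces --- are precisely the ones the paper delegates to the cited references.
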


We call $\pi_F\colon X \dasharrow Q_F$ a \emph{Fujiki quotient} of $X$ by $G$.

\section{Hamiltonian $G$-spaces}

Let $G=K^\mbb{C}$ be a complex reductive group with maximal compact subgroup 
$K$. In this section we review the definition and some properties of Hamiltonian 
$G$-manifolds. A general reference for the complex-analytic theory of moment 
maps on K\"ahler manifolds is~\cite{HH2}.

\subsection{Moment maps}

Let $(Z,\omega_Z)$ be a K\"ahler manifold with K\"ahler form $\omega_Z$. Suppose 
that $G$ acts holomorphically on $Z$ such that $\omega_Z$ is $K$-invariant. A 
\emph{moment map} for the $K$-action on $Z$ is a $K$-equivariant smooth map 
$\mu\colon Z\to\lie{k}^*$, where $K$ acts via the coadjoint representation on 
the dual $\lie{k}^*$ of its Lie algebra, such that
\begin{equation}\label{Eqn:momentcondition}
d\mu^\xi=\iota_{\xi_Z}\omega_Z.
\end{equation}
Here, for every $\xi\in\lie{k}$, we write $\mu^\xi\in\mathcal{C}^\infty(Z)$ for 
the function defined by $\mu^\xi(z)=\mu(z)(\xi)$, and $\xi_Z$ for the vector 
field on $Z$ whose flow is given by $(t,z)\mapsto\exp(-t\xi)\acts z$, and 
$\iota_{\xi_Z}\omega_Z$ for the contraction of $\omega_Z$ by $\xi_Z$.

If a moment map for the $K$-action on $Z$ exists, we say that $(Z,\omega_Z)$ is 
a \emph{Hamiltonian $G$-manifold}. The notions introduced above make sense for 
actions of $G$ on K\"ahlerian complex spaces, see for example \cite[Sections 3.1 
and 3.2]{PaHq} and the references given there; in this setup one speaks about 
\emph{Hamiltonian $G$-spaces}.

\begin{rem}\label{rem:semisimple_always_Hamiltonian}{\rm
If $G$ is semisimple, then every K\"ahler manifold $(Z,\omega_Z)$ endowed with a 
holomorphic $G$-action such that $\omega_Z$ is $K$-invariant is Hamiltonian. 
Moreover, in this case the moment map $\mu\colon Z\to\lie{k}^*$ is unique, 
see~\cite[Proposition~24.1]{GS} and the discussion following this proposition.}
\end{rem}

\begin{rem}\label{rem:reductive_equivalence}{\rm
For a \emph{compact} K\"ahler manifold $(Z,\omega_Z)$ endowed with a 
holomorphic action of the connected complex reductive Lie group $G=K^\mathbb{C}$ 
and $K$-invariant K\"ahler form $\omega_Z$ the following statements are 
equivalent:
\begin{enumerate}[\rm (1)]
\item $Z$ is a Hamiltonian $G$-manifold.
\item $G$ acts meromorphically on $Z$ in the sense of~\cite{Fuj}, see also 
Section~\ref{Section:mero}.
\item $G$ acts trivially on the Albanese torus $\Alb(Z)$ of $Z$.
\end{enumerate}
The equivalence $(1)\Longleftrightarrow(3)$ was observed in~\cite[Lemma~2.1 and 
subsequent remark]{Fuj2}. The implication $(2)\Longrightarrow(3)$ follows from 
\cite[Lemma~3.8]{Fuj} since every complex reductive group is linear algebraic. 
The last implication $(3)\Longrightarrow(2)$ follows 
from~\cite[Proposition~I]{Som}, see also~\cite[Proposition~6.10]{Fuj}.}
\end{rem}

For later use we record an elementary result on the moment image of a  
Hamiltonian $G$-manifold $(Z,\omega_Z)$ with moment map $\mu\colon 
Z\to\lie{k}^*$: If the interior of $\mu(Z)$ relative to $\lie{k}^*$ is 
non-empty, then, due to Sard's Theorem, there exists a point $z\in Z$ such that 
$d\mu_z$ is surjective. Since condition~\eqref{Eqn:momentcondition} implies
\begin{equation}\label{eq:magic_formula}
\ker d\mu_z=(\lie{k}\acts z)^{\perp_{\omega_Z}}=
\{\xi_Z(z)\mid \xi\in\lie{k}\}^{\perp_{\omega_Z}},
\end{equation}
cf.~\cite[Section~2.3]{HH2}, and therefore that the rank of $\mu$ in $z$ 
coincides with $\dim K\acts z$, we conclude that $K_z$ is finite. Conversely, if 
$K_z$ is finite, then $\mu$ is a submersion in $z$, hence the image of $\mu$ has 
interior points in $\lie{k}^*$. We summarize this discussion in the following

\begin{lem}\label{Lem:IntPoints}
Suppose that $(Z,\omega_Z)$ is a $G$-connected\footnote{We say that $Z$ is 
$G$-connected if it cannot be written as the disjoint union of two non-empty $G$-stable 
closed subsets.} Hamiltonian $G$-manifold. Then $\mu(Z)$ has non-empty interior 
in $\lie{k}^*$ if and only if $K$ acts with generically finite isotropy on $Z$.
\end{lem}

\subsection{The set of semistable points}

The set of semistable points is defined by
\begin{equation*}
Z^{ss}_G(\mu):=\{z\in Z\mid \ol{G\acts z}\cap\mu^{-1}(0)\not=
\emptyset\}.
\end{equation*}
The $G$-invariant set $Z^{ss}_G(\mu)$ is open and can be characterized in the 
following way. For $z\in Z^{ss}_G(\mu)$ consider the inclusion $\iota\colon 
\ol{G\acts z}\cap Z^{ss}_G(\mu) \hookrightarrow Z$. Then 
$\iota^*\omega_Z=i\partial\ol{\partial}\rho$ for some strictly plurisubharmonic 
exhaustion function $\rho$ and $\iota^*\mu=\mu_\rho$ where 
$\mu_\rho\colon\ol{G\acts z}\cap Z^{ss}_G(\mu)\to\lie{k}^*$ is given by 
\begin{equation}\label{Eqn:murho}
\mu_\rho(z)(\xi):=d\rho_z(J\xi_Z(z)),
\end{equation}
where $J$ denotes the complex structure of $Z$, see~\cite[Section~3]{HH1}. 

If $Z$ is compact and if $G$ is semisimple, then $Z^{ss}_G(\mu)$ depends only on 
the K\"ahler class $[\omega_Z]$, see~\cite[p.~71]{HH1}. Since we generalize 
this result later on for Hamiltonian actions of unipotent groups, we repeat its 
proof here for the readers' convenience.

\begin{prop}\label{Prop:Semistable}
Let $(Z,\omega_Z)$ be a compact Hamiltonian $G$-manifold with moment map $\mu$ 
and let $\omega_Z'$ be another $K$-invariant K\"ahler form on $Z$ such that 
$[\omega_Z]=[\omega_Z']\in H^2(Z,\,\mbb{R})$. Then there exists a 
moment map $\mu'$ for the $K$-action on $(Z,\omega_Z')$ such that 
$Z^{ss}_G(\mu) =Z^{ss}_G(\mu')$.
\end{prop}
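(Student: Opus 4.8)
The plan is to exploit the characterisation of semistability via strictly plurisubharmonic exhaustion functions on orbit closures, together with the $\partial\overline{\partial}$-lemma on the compact Kähler manifold $Z$. Since $[\omega_Z]=[\omega_Z']$ in $H^2(Z,\mathbb{R})$, the difference $\omega_Z'-\omega_Z$ is an exact real $(1,1)$-form, so by the global $\partial\overline{\partial}$-lemma there exists $\varphi\in\mathcal{C}^\infty(Z,\mathbb{R})$ with $\omega_Z'-\omega_Z=i\partial\overline{\partial}\varphi$. Averaging $\varphi$ over the compact group $K$ (which is possible since $G$ is reductive with maximal compact $K$, and $\omega_Z,\omega_Z'$ are $K$-invariant), we may and do assume that $\varphi$ is $K$-invariant. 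First I would use $\varphi$ to write down the candidate moment map: set $\mu'{}^\xi:=\mu^\xi+d\varphi(J\xi_Z)$ for $\xi\in\mathfrak{k}$, equivalently $\mu'(z)(\xi)=\mu(z)(\xi)+d\varphi_z(J\xi_Z(z))$. A direct computation using $\mathcal{L}_{\xi_Z}\varphi=0$ (the $K$-invariance of $\varphi$) and the Kähler identity $\iota_{\xi_Z}(i\partial\overline{\partial}\varphi)=d\bigl(d\varphi(J\xi_Z)\bigr)$ shows that $d\mu'{}^\xi=\iota_{\xi_Z}\omega_Z'$, so $\mu'$ is indeed a moment map for the $K$-action on $(Z,\omega_Z')$; $K$-equivariance of $\mu'$ follows from that of $\mu$ and the $K$-invariance of $\varphi$.

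Next I would prove the equality of semistable sets. The inclusion $Z^{ss}_G(\mu)\subseteq Z^{ss}_G(\mu')$ (and, by symmetry, the reverse, replacing $\varphi$ by $-\varphi$ and exchanging the roles of the two forms) is where the plurisubharmonic-exhaustion characterisation enters. Fix $z\in Z^{ss}_G(\mu)$ and let $A:=\overline{G\acts z}\cap Z^{ss}_G(\mu)$ with inclusion $\iota\colon A\hookrightarrow Z$. By the discussion preceding the proposition there is a strictly plurisubharmonic exhaustion $\rho$ on $A$ with $\iota^*\omega_Z=i\partial\overline{\partial}\rho$ and $\iota^*\mu=\mu_\rho$, where $\mu_\rho$ is given by \eqref{Eqn:murho}. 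Then $\rho':=\rho+\iota^*\varphi$ satisfies $i\partial\overline{\partial}\rho'=\iota^*\omega_Z'$, and since $A$ is (contained in an orbit closure, hence) relatively compact boundary behaviour is controlled: $\iota^*\varphi$ is bounded because $Z$ is compact, so $\rho'$ is again an exhaustion of $A$, and it is strictly plurisubharmonic after possibly noting that strict plurisubharmonicity of $\rho$ together with boundedness of the perturbation near the ideal boundary of $A$ suffices for the Hilbert-quotient machinery — more precisely, one compares the moment maps directly. The key identity is $\mu_{\rho'}=\mu_\rho+\iota^*\bigl(d\varphi(J\cdot_Z)\bigr)=\iota^*\mu'$, so $0$ is attained by $\mu'$ on $\overline{G\acts z}$ exactly when it is attained by the Kempf--Ness-type function associated to $\rho'$, which happens on the same $G$-orbit closure because the analytic Hilbert quotient of $A$ depends only on $A$ and the $G$-action, not on the chosen strictly psh exhaustion. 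Hence $\overline{G\acts z}\cap\mu'^{-1}(0)\neq\emptyset$, i.e. $z\in Z^{ss}_G(\mu')$.

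The main obstacle I anticipate is the control of the perturbed function $\rho'=\rho+\iota^*\varphi$ at the boundary of $A=\overline{G\acts z}\cap Z^{ss}_G(\mu)$: a priori $\rho'$ need not be an exhaustion nor strictly plurisubharmonic just because $\rho$ is, since adding a bounded but possibly wildly-oscillating smooth function can destroy both properties in general. The way around this is to observe that one does \emph{not} need $\rho'$ itself to be a nice exhaustion; rather, one invokes that $Z^{ss}_G(\mu)$ and $Z^{ss}_G(\mu')$ each carry an analytic Hilbert quotient and that, on the level of each $G$-orbit closure intersected with the respective semistable set, the condition ``$0\in\operatorname{image}$ of the moment map'' is intrinsic to the complex-geometric data (the orbit structure of the $G$-action together with the Kähler class), by the uniqueness and $\partial\overline{\partial}$-invariance statements in \cite{HH1}. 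Concretely, I would cite \cite[Section~3]{HH1} for the fact that the semistable set, viewed through the exhaustion-function characterisation, is unchanged when $\rho$ is replaced by $\rho+h$ for $h$ a bounded $K$-invariant pluriharmonic-up-to-$\partial\overline{\partial}$-exact perturbation whose associated one-form $d h(J\cdot)$ is the relevant correction term — which is exactly our situation with $h=\iota^*\varphi$. Running the same argument with $-\varphi$ gives the opposite inclusion and hence $Z^{ss}_G(\mu)=Z^{ss}_G(\mu')$.
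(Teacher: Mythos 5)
Your proposal is correct and follows essentially the same route as the paper: the $\partial\overline{\partial}$-lemma produces $\varphi$, the new moment map is $\mu'=\mu+\mu_\varphi$ with $\mu_\varphi$ defined as in Equation~\eqref{Eqn:murho}, and the key point is that $\rho+\iota^*\varphi$ remains an exhaustion of $\overline{G\acts z}\cap Z^{ss}_G(\mu)$ because $\varphi$ is bounded on the compact manifold $Z$. The only remark is that the ``obstacle'' you anticipate is vacuous: $i\partial\overline{\partial}(\rho+\iota^*\varphi)=\iota^*\omega_Z'$ is automatically positive, so strict plurisubharmonicity is free, and a bounded perturbation of an exhaustion is again an exhaustion, so the detour through the intrinsicness of the analytic Hilbert quotient is unnecessary.
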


\begin{proof}
Since $Z$ is compact K\"ahler, the $\partial\ol{\partial}$-lemma implies 
$\omega_Z'=\omega_Z+i\partial\ol{\partial}\varphi$ for some 
$\varphi\in\mathcal{C}^\infty(Z)$. Defining $\mu_\varphi\colon Z\to\lie{k}^*$ in 
the same way as in Equation~\eqref{Eqn:murho} one checks directly that the map 
$\mu':=\mu+\mu_\varphi$ is a moment map for the $K$-action on $(Z,\omega_Z')$.

Now suppose that $z\in Z^{ss}_G(\omega_Z)$. As noted above, we have 
$\iota^*\omega_Z=i\partial\ol{\partial}\rho$ and $\iota^*\mu=\mu_\rho$ for some 
strictly plurisubharmonic function $\rho$ on $\ol{G\acts z}\cap Z^{ss}_G(\mu)$ 
where $\iota\colon \ol{G\acts z}\cap Z^{ss}_G(\mu)\hookrightarrow Z$ is the 
inclusion. Since $Z$ is assumed to be compact, the function $\varphi$ is 
bounded, hence $\rho+\iota^*\varphi$ is still an exhaustion function on 
$\ol{G\acts z}\cap Z^{ss}_G(\mu)$. Consequently, $\iota^*\mu'=
\mu_{\rho+\iota^*\varphi}$ has a zero on $\ol{G\acts z}\cap Z^{ss}_G(\mu)$, 
which implies $z\in Z^{ss}_G(\mu')$. The converse inclusion follows by symmetry.
\qed
\end{proof}

\begin{rem}{\rm
If $G$ is semisimple and if $(Z,\omega_Z)$ is a compact Hamiltonian 
$G$-manifold, the moment map for the $K$-action on $Z$ is unique, see 
Remark~\ref{rem:semisimple_always_Hamiltonian} above. Due to 
Proposition~\ref{Prop:Semistable}, the set of semistable points therefore 
depends only on $[\omega_Z]\in H^2(Z,\,\mbb{R})$. In this case we thus 
write $Z^{ss}_G[\omega_Z]$ instead of $Z^{ss}_G(\mu)$.}
\end{rem}

\subsection{Analytic Hilbert quotients}\label{subsect:aHq_properties}

The importance of semistability stems from the fact that the set of semistable 
points admits the analogue of a good quotient in the analytic category:

Let $G$ be a complex reductive Lie group and $Z$ a complex space endowed with a 
holomorphic $G$-action. A complex space $Y$ together with a $G$-invariant 
surjective holomorphic map $\pi\colon Z \to Y$ is called an \emph{analytic 
Hilbert quotient}\footnote{In some places in the literature, the terminology 
\emph{semistable quotient} is used for the same concept.} of $Z$ by the action 
of $G$ if
\begin{enumerate}[\rm (1)]
 \item $\pi$ is a locally Stein map, and
 \item $(\pi_*\mathscr{O}_Z)^G = \mathscr{O}_Y$ holds.
\end{enumerate}
Here, \emph{locally Stein} means that there exists an open covering of $Y$ by 
open Stein subspaces $U_\alpha$ such that $\pi^{-1}(U_\alpha)$ is a Stein 
subspace of $Z$ for all $\alpha$; by $(\pi_*\mathscr{O}_Z)^G$ we denote the 
sheaf $U \mapsto \mathscr{O}_Z(\pi^{-1}(U))^G = \{f \in 
\mathscr{O}_Z(\pi^{-1}(U)) \mid f \;\; G\text{-invariant}\}$, $U$ open in
$Y$.

An analytic Hilbert quotient of a holomorphic $G$-space $Z$ is unique up to 
biholomorphism once it exists and we will denote it by $Z\hq G$. The following 
properties follow from the corresponding ones in the Stein case, where analytic 
Hilbert quotients always exist, see \cite{HeinznerGIT}: Two points $x,x' \in Z$ 
have the same image in $Z\hq G$ if and only if $\overline{G\acts x} \cap 
\overline{G\acts x'} \neq \emptyset$. For each $q \in Z\hq G$, the fibre 
$\pi^{-1}(q)$ contains a unique closed $G$-orbit $G\acts x$. The stabiliser 
$G_x$ of $x$ in $G$ is a complex reductive Lie group, see \cite{Mat}. If $A 
\subset X$ is a $G$-invariant analytic subset, then $\pi(A) \subset X\hq G$ is 
analytic, and $\pi|_A\colon A \to \pi(A)$ is an analytic Hilbert quotient.

The main results in the quotient theory for complex reductive group actions on 
K\"ahler spaces are summarised in the following theorem.

\begin{thm}[\cite{ReductionOfHamiltonianSpaces},
\cite{Extensionofsymplectic}, \cite{HH2}, \cite{Sj2}]\label{propertiesmomentumquotients}
Let $Z$ be a Hamiltonian $G$-space with K\"ahler form $\omega_Z$ and moment map 
$\mu\colon Z \to \mathfrak{k}^*$. Then,
\begin{enumerate}[\rm (1)]
\item $Z^{ss}_G(\mu)$ is open and $G$-invariant, and the analytic Hilbert 
quotient $\pi\colon Z^{ss}_G(\mu) \to Z^{ss}_G(\mu)\hq G$ exists,
\item the inclusion $\mu^{-1}(0) \hookrightarrow Z^{ss}_G(\mu)$ induces a 
homeomorphism $\mu^{-1}(0)/K \simeq Z^{ss}_G(\mu)\hq G$,
\item the complex space $Z^{ss}_G(\mu)\hq G$ carries a K\"ahler structure that 
is induced by symplectic reduction from $\omega_Z$ and that is smooth along a 
natural stratification of $Z^{ss}_G(\mu)\hq G$.
\end{enumerate}
\end{thm}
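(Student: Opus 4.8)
The plan is to deduce all three statements from the local structure of Hamiltonian $G$-spaces near the zero fibre of the moment map, which reduces everything to the Stein case, where analytic Hilbert quotients are known to exist by~\cite{HeinznerGIT}. That $Z^{ss}_G(\mu)$ is $G$-invariant is immediate, since $\ol{G\acts z}$ depends only on the orbit $G\acts z$. For openness and for the existence of the quotient, the decisive input is a slice theorem for the $G$-action near zeros of $\mu$: for every $x\in\mu^{-1}(0)$ one produces a $G$-invariant open neighbourhood $U_x\subseteq Z$ carrying an analytic Hilbert quotient $p_x\colon U_x\to U_x\hq G$ such that $\mu^{-1}(0)\cap U_x$ surjects onto $U_x\hq G$ and every $p_x$-fibre contains a unique closed $G$-orbit, which meets $\mu^{-1}(0)$. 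In the projective setting this is exactly what the gradient flow of $\norm{\mu}^2$ provides; in general I would obtain it by a local analysis, using the moment-map condition~\eqref{Eqn:momentcondition} to produce the $K$-invariant strictly plurisubharmonic functions of~\eqref{Eqn:murho} along orbits and then appealing to the Stein case. Setting $\Omega:=\bigcup_{x\in\mu^{-1}(0)}U_x$, one gets an open $G$-invariant set on which the $p_x$ --- being unique and compatible with passing to saturated open subsets --- glue to an analytic Hilbert quotient $\pi$. It then remains to check $\Omega=Z^{ss}_G(\mu)$: the inclusion $\Omega\subseteq Z^{ss}_G(\mu)$ holds because inside each $U_x$ the closed orbit of a $p_x$-fibre meets $\mu^{-1}(0)$; conversely, if $z\in Z^{ss}_G(\mu)$ and $z_0\in\ol{G\acts z}\cap\mu^{-1}(0)$ lies in $U_x$, then $z\in U_x\subseteq\Omega$, since $U_x$ is open, $G$-invariant, and contains the limit point $z_0$ of $G\acts z$.

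For statement~(2), the composite $\mu^{-1}(0)\hookrightarrow Z^{ss}_G(\mu)\to Z^{ss}_G(\mu)\hq G$ is $K$-invariant and so descends to a continuous map $\mu^{-1}(0)/K\to Z^{ss}_G(\mu)\hq G$, whose surjectivity is the fibrewise statement just used. Injectivity is the Kempf--Ness part of the argument: along each geodesic $t\mapsto\exp(it\xi)\acts x$ with $\xi\in\lie{k}$ the potential $\rho$ from~\eqref{Eqn:murho} is convex with a critical point at $x\in\mu^{-1}(0)$, which shows simultaneously that $G\acts x$ is closed in $Z^{ss}_G(\mu)$ --- so that $\pi(x)=\pi(y)$ forces $\ol{G\acts x}\cap\ol{G\acts y}\ne\emptyset$ and hence $G\acts x=G\acts y$ for $x,y\in\mu^{-1}(0)$ --- and that $\mu^{-1}(0)\cap G\acts x$ is a single $K$-orbit, whence $x$ and $y$ lie in one $K$-orbit. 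That the resulting continuous bijection is a homeomorphism I would read off the local K\"ahler slice model, in which a neighbourhood of $\pi(x)$ in $Z^{ss}_G(\mu)\hq G$ is the analytic Hilbert quotient of a slice at $x$ and a neighbourhood of $[x]$ in $\mu^{-1}(0)/K$ is the corresponding $K_x$-reduction, the slice theorem matching the two.

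For statement~(3), on the smooth locus of each orbit-type stratum $S$ of $Z^{ss}_G(\mu)\hq G$ I would define the reduced form by the usual recipe: with $\iota\colon\mu^{-1}(0)\hookrightarrow Z$ and the $K$-quotient $q\colon\mu^{-1}(0)\to\mu^{-1}(0)/K$, there is a unique smooth form $\omega_S$ with $q^*\omega_S=\iota^*\omega_Z$ over $q^{-1}(S)$, and one checks stratum by stratum, via the linear K\"ahler slice models, that $\omega_S$ is a genuine K\"ahler form --- this is Sjamaar's K\"ahler refinement of the Sjamaar--Lerman stratification. To see that these assemble into a K\"ahler structure on the complex space $Z^{ss}_G(\mu)\hq G$, I would push the local potentials down: on $U_x\hq G$ the function sending $\pi(z)$ to the value of $\rho$ at the (unique modulo $K$) zero of $\mu$ in $\ol{G\acts z}$ is continuous and plurisubharmonic, its $i\partial\ol{\partial}$ agrees with $\omega_S$ on the top stratum, and uniqueness of analytic Hilbert quotients renders these potentials compatible on overlaps.

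The hard part will be the very first step. In the non-compact, non-algebraic K\"ahler setting there is no complete gradient flow of $\norm{\mu}^2$ available to retract semistable points into $\mu^{-1}(0)$, so openness of $Z^{ss}_G(\mu)$, the existence of analytic Hilbert quotients on $G$-invariant neighbourhoods of $\mu^{-1}(0)$, and the identification of the orbits meeting $\mu^{-1}(0)$ with the closed orbits of $Z^{ss}_G(\mu)$ all have to be extracted by a delicate local bootstrap from the Stein case, using the plurisubharmonic functions supplied by~\eqref{Eqn:momentcondition}; this is precisely the content that I would invoke wholesale from~\cite{ReductionOfHamiltonianSpaces} and~\cite{Extensionofsymplectic}. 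Everything afterwards --- the gluing, the Kempf--Ness uniqueness, and the stratified K\"ahler structure --- is then comparatively formal.
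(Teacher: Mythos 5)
The paper gives no proof of this theorem: it is stated purely as a summary of results imported from \cite{ReductionOfHamiltonianSpaces}, \cite{Extensionofsymplectic}, \cite{HH2}, and \cite{Sj2}, so there is no internal argument to compare yours against. Your sketch accurately reconstructs the architecture of the proofs in those references --- local analytic Hilbert quotients on $G$-invariant neighbourhoods of points of $\mu^{-1}(0)$ obtained by reduction to the Stein case, gluing, the identification of the resulting open set with $Z^{ss}_G(\mu)$, the Kempf--Ness convexity argument for the homeomorphism $\mu^{-1}(0)/K\simeq Z^{ss}_G(\mu)\hq G$, and the Heinzner--Huckleberry--Loose/Sjamaar construction of the reduced K\"ahler structure from pushed-down invariant potentials --- and you correctly isolate the genuinely hard analytic input (the slice theorem near the zero fibre in the absence of a complete gradient flow of $\norm{\mu}^2$) as the part that must be invoked wholesale from the literature, which is exactly what the paper itself does.
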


\subsection{Moment maps associated with representations and their 
images}\label{sect:moment_maps_for_reps}

Let $G$ be a connected semisimple complex Lie group acting linearly on a finite 
dimensional complex vector space $V$. If we equip $V$ with the $K$-invariant 
flat K\"ahler metric $\omega_V$ given by a $K$-invariant hermitian inner 
product $\langle \cdot,\cdot\rangle$, then a moment map $\mu_V\colon 
V\to\lie{k}^*$ for the $K$-action on $V$ is given by $\mu_V^\xi(v)= 
-\frac{i}{2}\langle \xi.v,v\rangle$ for every $\xi\in\lie{k}$. Note that 
$\omega_V=i\partial\ol{\partial}\rho$ where $\rho(v)=\norm{v}^2$ for all 
$x\in V$ and that $\mu_V=\mu_\rho$ in this case.

For any $v\in V$ consider the affine $G$-variety $\ol{G\acts v}$. The 
restriction of $\mu_V$ to $\ol{G\acts v}$ yields the moment map for the 
$K$-action on $\ol{G\acts v}$ associated with the strictly plurisubharmonic 
exhaustion function $\rho|_{\ol{G\acts v}}$. By abuse of notation we will 
denote the restricted K\"ahler form and moment map again by $\omega_V$ and 
$\mu_V$, respectively. For later use we record the following result of Sjamaar, 
see~\cite[Theorem~4.9, Lemma~4.10]{Sj}. For its statement we have to introduce a 
maximal torus $T$ of $K$ with Lie algebra $\lie{t}$, the choice of a positive 
Weyl chamber $\lie{t}^*_+$, and the corresponding set $\Lambda^+$ of dominant 
weights. For $\lambda\in\Lambda^+$ let $V_\lambda$ denote the irreducible 
$G$-representation with highest weight $\lambda$.

\begin{thm}\label{Thm:MomentImage}
The moment map $\mu_V\colon\ol{G\acts v}\to\lie{k}^*$ is proper and verifies
\begin{equation*}
\mu_V(\ol{G\acts v})\cap \lie{t}^*_+=\cone\{\lambda\in\Lambda^+\mid 
V_\lambda\text{ occurs in $\mbb{C}[\ol{G\acts v}]$}\}.
\end{equation*}
\end{thm}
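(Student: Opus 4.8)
The plan is to prove Theorem~\ref{Thm:MomentImage} by relating the moment map image of the orbit closure $\ol{G\acts v}$ to the asymptotic geometry encoded in the coordinate ring $\mbb{C}[\ol{G\acts v}]$, following Sjamaar's argument. First I would recall the general principle that for a Hamiltonian action of a reductive group on a closed affine subvariety $A\subset V$ equipped with the restriction of the flat K\"ahler form, the moment map is proper; this uses that $\rho|_A=\norm{\cdot}^2|_A$ is a proper exhaustion function and that on an affine variety the associated moment map $\mu_\rho$ has the property that its norm squared, or more precisely the pair $(\rho,\norm{\mu}^2)$, controls the behaviour at infinity via the Kempf--Ness function along one-parameter subgroups. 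Properness then follows because a sequence escaping to infinity in $A$ either escapes to infinity in $V$ (where $\rho\to\infty$) and one uses the gradient-flow description of $\mu$.

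Next I would turn to the identification of the image with the weight cone. The key structural input is that $\mbb{C}[\ol{G\acts v}]$ decomposes as a $G$-module into a direct sum $\bigoplus_{\lambda} V_\lambda^{\oplus m_\lambda}$, and that the support $\{\lambda\in\Lambda^+\mid m_\lambda>0\}$ is a finitely generated monoid (or at least spans a convex cone), so $\cone$ of it is a genuine rational polyhedral cone. One direction — that $\mu_V(\ol{G\acts v})\cap\lie{t}^*_+$ is contained in this cone — I would get from a highest-weight/asymptotic argument: if $\lambda$ lies in the moment image, one produces, via the properness and the structure of $K$-orbits meeting $\lie{t}^*_+$, a sequence of points whose Kempf--Ness-type functionals detect that $V_\lambda$-isotypic components must appear in the ring (using that non-vanishing of certain matrix coefficients along torus degenerations forces the corresponding weight to occur). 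The reverse inclusion — that every weight occurring in $\mbb{C}[\ol{G\acts v}]$ gives a ray in the moment image — I would obtain by taking, for each such $V_\lambda$, a highest-weight vector in the dual picture and running the associated gradient flow / degeneration to land on a point of $\ol{G\acts v}$ whose moment image is a positive multiple of $\lambda$, then invoking convexity of $\mu_V(\ol{G\acts v})\cap\lie{t}^*_+$ (the Kirwan--Brion convexity theorem for proper moment maps on affine $G$-varieties) to fill in the whole cone.

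The main obstacle, and the technical heart of the argument, is establishing the convexity of $\mu_V(\ol{G\acts v})\cap \lie{t}^*_+$ together with its precise identification as the cone spanned by the occurring weights — this is exactly the content of Sjamaar's theorems and is genuinely nontrivial, requiring either Brion's technique of toric degeneration of the affine $G$-variety to its horospherical contraction (so that the moment image becomes computable as a union of torus moment images = weight polytopes) or Sjamaar's analytic approach via partial Kempf--Ness functions and the local normal form. Since the statement is attributed to \cite[Theorem~4.9, Lemma~4.10]{Sj}, I would in practice simply cite Sjamaar for this step rather than reproduce it, and concentrate the written proof on (i) checking that the flat metric and the restriction to $\ol{G\acts v}$ put us exactly in Sjamaar's setup, i.e.\ that $\rho|_{\ol{G\acts v}}$ is a $K$-invariant strictly plurisubharmonic proper exhaustion with $\mu_V=\mu_\rho$, and (ii) translating his ``ring of $G$-finite functions / ring of regular functions'' hypotheses into the concrete statement about $V_\lambda$ occurring in $\mbb{C}[\ol{G\acts v}]$. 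That translation is routine once one notes $\ol{G\acts v}$ is an affine $G$-variety so its coordinate ring is rational as a $G$-module.
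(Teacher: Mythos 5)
Your proposal is correct and ends up in the same place as the paper: the paper gives no proof of Theorem~\ref{Thm:MomentImage} at all, recording it purely as a quoted result of Sjamaar \cite[Theorem~4.9, Lemma~4.10]{Sj}, which is exactly where you defer the hard convexity and properness statements. Your additional verification that the flat metric on $V$ restricted to the affine variety $\ol{G\acts v}$ puts one in Sjamaar's setting (with $\mu_V=\mu_\rho$ for $\rho=\norm{\cdot}^2$) is the content of the paragraph preceding the theorem in the paper, so the two treatments essentially coincide.
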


In general it may be rather difficult to decide for which dominant weight 
$\lambda$ the irreducible representation $V_\lambda$ occurs in 
$\mbb{C}[G\acts v]$. Note that the inclusion $G\acts v\hookrightarrow 
\ol{G\acts v}$ gives an injective homomorphism of algebras $\mbb{C}[\ol{G\acts 
v}]\to \mbb{C}[G\acts v]\cong\mbb{C}[G]^{G_v}$. Therefore the situation is 
slightly easier for affine completions of quasi-affine homogeneous spaces $G/G_v$ 
for which the map $\mbb{C}[\ol{G\acts v}]\to\mbb{C}[G]^{G_v}$ is an 
isomorphism. 

To follow this train of thought, recall that an algebraic subgroup $H$ of $G$ 
is called \emph{Grosshans} if $G/H$ is quasi-affine and if the algebra 
$\mbb{C}[G]^H\cong \mbb{C}[G/H]$ is finitely generated. This is equivalent to 
the existence of a finite-dimensional $G$-representation space $W$ containing 
$G/H$ as an orbit $G\acts w$ such that the codimension of $\ol{G\acts 
w}\setminus G\acts w$ in $\ol{G\acts w}$ is at least $2$, 
see~\cite[Theorem~4.3]{Gr2}. Recall that an algebraic subgroup of $G$ is called 
\emph{unipotent} if it consists entirely of unipotent elements. If $N$ is a 
unipotent subgroup of $G$, then $G/N$ is always quasi-affine, see 
\cite[Corollary 1.5]{Gr2}, but not affine, see \cite{Mat}; however, not every 
such $N$ is Grosshans. If $N$ is the unipotent radical of a parabolic subgroup 
of $G$, then $N$ is Grosshans, see~\cite[Theorem~2.2]{Gr1}.

Suppose that the unipotent subgroup $N$ of $G$ is Grosshans. Then we have a 
canonical affine completion $\ol{G/N}^\text{a}=\Spec\mbb{C}[G]^N$. Since $N$ is 
contained in a maximal unipotent subgroup of $G$, we can deduce 
from~\cite[Example~4.19]{Sj} that
\begin{equation*}
\{\lambda\in\Lambda^+\mid V_\lambda\text{ occurs in $\mbb{C}[G]^N$}\} 
=\Lambda^+.
\end{equation*}
Consequently, by embedding $\ol{G/N}^\text{a}$ into any $G$-representation, we 
can find a K\"ahler form inducing a proper, surjective moment map for the 
$K$-action on $\ol{G/N}^\text{a}$.  Combining this observation with an 
application of Lemma~\ref{Lem:IntPoints} to the free $K$-action on $G/N\subset 
\ol{G/N}^\text{a}$ we obtain the following result.

\begin{lem}\label{Lem:GrosshansMomentImage}
Suppose that $N$ is a unipotent Grosshans subgroup of $G$. Then there exists a 
K\"ahler form $\omega_V$ on $G/N$ such that the image of the corresponding 
moment map $\mu_V\colon G/N\to\lie{k}^*$ is a $K$-invariant dense open subset 
of $\lie{k}^*_{\reg}$. Moreover, $\omega_V$ and $\mu_V$ extend to the canonical 
affine completion $\ol{G/N}^{\text{a}}$.
\end{lem}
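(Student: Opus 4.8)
The plan is to combine Theorem~\ref{Thm:MomentImage} with the quoted computation of highest weights occurring in $\mbb{C}[G]^N$ and then apply Lemma~\ref{Lem:IntPoints}. First I would fix a faithful finite-dimensional $G$-representation $W$ realising $\ol{G/N}^{\text{a}}$ as the closure $\ol{G\acts w}$ of an orbit, which exists because $N$ is Grosshans (so $\mbb{C}[G]^N$ is finitely generated and $\ol{G/N}^{\text{a}}=\Spec\mbb{C}[G]^N$ embeds affinely into $W$). Equip $W$ with a $K$-invariant Hermitian inner product; the associated flat K\"ahler form $\omega_V=i\partial\ol\partial\rho$ with $\rho=\norm{\cdot}^2$ restricts to a K\"ahler form on $\ol{G\acts w}$, and by the discussion preceding Theorem~\ref{Thm:MomentImage} the induced moment map $\mu_V\colon\ol{G\acts w}\to\lie{k}^*$ is exactly $\mu_\rho$ and is \emph{proper}. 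Restricting further to the open orbit $G\acts w\cong G/N$ gives the claimed $\omega_V$ on $G/N$; properness on the completion is automatic from Theorem~\ref{Thm:MomentImage}, and this is where the ``Moreover'' clause about extension to $\ol{G/N}^{\text{a}}$ comes from essentially for free, since we built $\omega_V$ and $\mu_V$ on the completion to begin with.

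Next I would identify the moment image. By Theorem~\ref{Thm:MomentImage},
\[
\mu_V(\ol{G\acts w})\cap\lie{t}^*_+=\cone\{\lambda\in\Lambda^+\mid V_\lambda\text{ occurs in }\mbb{C}[\ol{G\acts w}]\}.
\]
Since $\ol{G\acts w}=\ol{G/N}^{\text{a}}$ has coordinate ring $\mbb{C}[G]^N$, and since $N$, being unipotent, is contained in a maximal unipotent subgroup of $G$, the cited consequence of \cite[Example~4.19]{Sj} gives that \emph{every} dominant weight occurs, i.e.\ the set above is all of $\Lambda^+$. Hence $\mu_V(\ol{G/N}^{\text{a}})\cap\lie{t}^*_+=\cone(\Lambda^+)=\lie{t}^*_+$ (the closed Weyl chamber), and by $K$-equivariance and the fact that $\mu_V(\ol{G/N}^{\text{a}})$ is $K$-invariant we conclude $\mu_V(\ol{G/N}^{\text{a}})$ contains $K\acts\lie{t}^*_+=\lie{k}^*$; being also closed (properness) it equals $\lie{k}^*$. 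In particular $\mu_V$ is surjective onto $\lie{k}^*$ when considered on the completion.

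Now I would pin down the image of the \emph{open} orbit $G/N$. The $K$-action on $G/N$ is free (indeed $K\cap N=\{e\}$ since $K$ is compact and $N$ unipotent, and more precisely $K_{gN}$ is trivial for all $gN$), so in particular the generic isotropy is finite; by Lemma~\ref{Lem:IntPoints}, applied to the $G$-connected Hamiltonian $G$-manifold $G/N$, the set $\mu_V(G/N)$ has non-empty interior in $\lie{k}^*$. Moreover $\mu_V(G/N)$ is $K$-invariant. To see it is open I would use that $K$ acts freely on $G/N$, so by \eqref{eq:magic_formula} the rank of $\mu_V$ at each point of $G/N$ equals $\dim K\acts(gN)=\dim K$ (using $\dim G/N\ge\dim K$, which holds since $\dim G/N=\dim G-\dim N\ge\dim G-\dim(\text{max.\ unipotent})=\dim K$); hence $\mu_V|_{G/N}$ is a submersion and $\mu_V(G/N)$ is open. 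Finally, $\mu_V(G/N)$ lands in $\lie{k}^*_{\reg}$: a point $g N$ has a regular image precisely when the $G$-isotropy $N$ of $gN$, intersected appropriately with a compact form, is as small as possible — more carefully, the boundary walls of the Weyl chamber are hit only by moment images of points with positive-dimensional isotropy under a subtorus, whereas on $G/N$ the stabiliser $N$ is unipotent and contains no non-trivial compact subgroup, forcing $\mu_V(G/N)\cap(\lie{k}^*\setminus\lie{k}^*_{\reg})=\emptyset$; combined with openness and $K$-invariance this yields that $\mu_V(G/N)$ is a $K$-invariant dense open subset of $\lie{k}^*_{\reg}$ (density because its closure is all of $\lie{k}^*$ by the previous paragraph).

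The main obstacle I anticipate is the last point: showing cleanly that $\mu_V(G/N)\subset\lie{k}^*_{\reg}$ and that it is dense, rather than merely open with non-empty interior. The inclusion into the regular set should follow from the description of which orbits of the coadjoint action meet the relative interior of the Weyl chamber together with the structure of the stabiliser $N$ (no non-trivial torus, since $N$ is unipotent), but making this rigorous may require invoking the convexity/stratification picture for the moment map of $\ol{G/N}^{\text{a}}$ or a direct computation of $\ker d\mu_V$ on $G/N$ using \eqref{eq:magic_formula}; density then drops out from $\ol{\mu_V(G/N)}\supseteq\mu_V(\ol{G/N}^{\text{a}})=\lie{k}^*$, which needs the (harmless but worth checking) fact that $G/N$ is dense in $\ol{G/N}^{\text{a}}$ and $\mu_V$ is continuous.
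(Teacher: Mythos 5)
Your proposal follows the same route as the paper's own (very terse) proof, which consists of the paragraph preceding the lemma: combine Sjamaar's description of the moment image of an affine orbit closure (Theorem~\ref{Thm:MomentImage}) with the fact that every dominant weight occurs in $\mbb{C}[G]^N$ because $N$ sits inside a maximal unipotent subgroup, conclude that $\mu_V$ is proper and surjective on $\ol{G/N}^{\text{a}}$, and then apply Lemma~\ref{Lem:IntPoints} to the free $K$-action on $G/N\subset\ol{G/N}^{\text{a}}$. Your handling of properness, surjectivity on the completion, openness of $\mu_V(G/N)$ via the submersion criterion coming from \eqref{eq:magic_formula}, density via $\ol{\mu_V(G/N)}\supseteq\mu_V(\ol{G/N}^{\text{a}})=\lie{k}^*$, and the ``Moreover'' clause all match what the authors intend and make explicit details they leave implicit.

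The one step where you go beyond the paper is the inclusion $\mu_V(G/N)\subseteq\lie{k}^*_{\reg}$, and the principle you invoke for it --- that walls of the Weyl chamber are hit only by moment images of points with positive-dimensional (toral) isotropy --- is false. Equivariance of $\mu_V$ only gives $K_z\subseteq K_{\mu_V(z)}$, which says nothing when $K_z$ is trivial, and a point with trivial stabiliser can map to a singular coadjoint orbit. Concretely, take $N=\{e\}$ (unipotent and Grosshans by the paper's definitions), so that $G/N=\ol{G/N}^{\text{a}}=G$: for any closed orbit embedding $G\hookrightarrow V$, Theorem~\ref{Thm:MomentImage} forces $\mu_V(G)\cap\lie{t}^*_+=\cone(\Lambda^+)=\lie{t}^*_+\ni 0$, so the image of this free $K$-action meets the walls (for $G={\rm{SL}}(2,\mbb{C})\subset\mbb{C}^{2\times 2}$ one checks directly that $\mu_V(\Id)=0$). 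So the obstacle you flagged at the end is real and your sketched argument for it would not go through; a correct proof of the containment in $\lie{k}^*_{\reg}$ would need input such as Kirwan's explicit description of the imploded cross-section, which the paper uses only later and only for unipotent radicals of parabolics. That said, the paper's own one-sentence proof does not address this inclusion either, and the later applications of the lemma only use that $\mu_V(G/N)$ is a $K$-invariant dense open subset of $\lie{k}^*$ and that $\mu_V$ is surjective on the completion --- both of which your argument does establish rigorously. In short: everything you actually prove is exactly what the paper proves; the part you could not prove is also the part the paper does not prove.
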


As the following example shows, this result depends on the choice of the 
$G$-repre\-sen\-ta\-tion into which we embed $G/N$.

\begin{ex}\label{Ex:nonsurjectivemomentmap}{\rm
Let $G$ be a simply-connected semisimple complex Lie group. For any choice of 
$\lambda_1,\dotsc,\lambda_k\in\Lambda^+$ consider $v:=v_1+\dotsb+v_k \in 
V^*_{\lambda_1}\oplus\dotsb \oplus V^*_{\lambda_k}$ where $v_j\in 
V^*_{\lambda_j}$ is a highest weight vector. Then we have $\mbb{C}[\ol{G\acts 
v}] =\bigoplus_{\lambda\in M}V_\lambda$ where $M$ is the submonoid of 
$\Lambda^+$ generated by $\lambda_1,\dotsc,\lambda_k$, see~\cite[Theorem~6]{PV}. 
Consequently, $\mu_V(\ol{G\acts v})\cap \mathfrak{t}^*$ is the cone generated 
by $\lambda_1,\dotsc,\lambda_k$.

To construct an example where the image of $\ol{G\acts v}$ under the moment map 
is not the whole of $\mathfrak{k}^*$, let $G={\rm{SL}}(3,\mbb{C})$ and choose 
$\lambda_1=\varpi_1+\varpi_2$ and $\lambda_2=2\varpi_1+\varpi_2$ where 
$\varpi_1,\varpi_2$ are the fundamental weights of $G$. Note that 
$V_{\lambda_1}$ and hence also $V_{\lambda_1}^*$ are isomorphic to the adjoint 
representation of $G={\rm{SL}}(3,\mbb{C})$. It follows that $G_{v_1}$ is the 
connected subgroup of $G$ having as Lie algebra the semi-direct sum of the 
kernel of $\lambda_1$ in the chosen Cartan sub-algebra of $\lie{g}$ and the 
positive maximal unipotent sub-algebra $\lie{n}$ of $\lie{g}$. Since the Lie 
algebra of $G_{v_2}$ for $v_2\in V_{\lambda_2}$ contains $\lie{n}$ and since the 
kernels of $\lambda_1$ and $\lambda_2$ intersect only trivially, the Lie algebra 
of the stabiliser of $v=v_1+v_2$ must coincide with $\lie{n}$. In summary, we 
see that in the chosen setup $G_v$ is the unipotent radical of a Borel 
subgroup of $G$, hence Grosshans, and that $\mu_V\colon\ol{G\acts 
v}\to\lie{k}^*$ is not surjective.}
\end{ex}

\begin{rem}\label{rem:forms_on_unipotent_radicals}{\rm
If $N$ is the unipotent radical of a parabolic subgroup of $G$, a 
$G$-representation space $E$ containing $\ol{G/N}^\text{a}$ and a certain 
$K$-invariant hermitian inner product on $E$ are described in great detail 
in~\cite{Kir}, extending~\cite{GJS} which dealt with unipotent radicals of Borel 
subgroups. In this situation it is natural to equip $\ol{G/N}^\text{a}$ with the 
restriction of the associated flat K\"ahler form $\omega_E$ as above, since the 
associated symplectic structure coincides with the one obtained via symplectic 
implosion from the cotangent bundle $T^*K$.}
\end{rem}

\section{Hamiltonian $G$-extensions}\label{Section:Extensions}

Let $G=K^\mbb{C}$ be a complex reductive group with maximal compact subgroup 
$K$. Recall that a unipotent subgroup of $G$ is by definition an algebraic 
subgroup of $G$ consisting entirely of unipotent elements. Such groups are 
automatically nilpotent and connected, see~\cite[Chapter~3.2.2]{OV}, hence 
simply-connected. Let $N$ be such a unipotent subgroup of $G$. Since our focus 
lies on actions of $N$, \emph{we suppose from now on that $G$ is connected and 
semisimple.}  Due to the simply-connectedness of $N$, by lifting to the 
universal cover if necessary, we may and will often assume that $G$ is 
simply-connected as well.

\subsection{Meromorphic actions and Hamiltonian extensions}

We will explore the relation between meromorphic $N$-actions and Hamiltonian 
$G$-actions. 

\begin{defn}{\rm
Let $(X, \omega_X)$ be a connected compact K\"ahler manifold endowed with a 
holomorphic $N$-action. A \emph{Hamiltonian $G$-extension of (the $N$-action on) 
$(X,\omega_X)$} consists in a connected compact Hamiltonian $G$-manifold 
$(Z,\omega_Z)$ and an $N$-equivariant embedding $\iota\colon X\hookrightarrow Z$ 
such that for the de Rham cohomology classes associated with the K\"ahler forms 
we have
\begin{equation}\label{eq:pullback}\iota^*[\omega_Z]=[\omega_X].\end{equation}}
\end{defn}

\begin{rem}{\rm
As $G=K^\mathbb{C}$, and hence $K$, is assumed to be semisimple, it follows 
from the fact that integration over $K$ does not change the cohomology class of 
a given K\"ahler form and from Remark~\ref{rem:semisimple_always_Hamiltonian} 
that any $N$-equivariant embedding of $(X,\omega_X)$ into a compact K\"ahler 
$G$-manifold $(Z,\omega_Z)$ satisfying Equation~\eqref{eq:pullback} is 
automatically a Hamiltonian $G$-extension.}
\end{rem}

The definition is motivated by the following example and the role it plays in 
the Geometric Invariant Theory of unipotent group actions on projective 
varieties, cf.~\cite[Section~5]{DorKir}.

\begin{ex}{\rm
Let $N$ act effectively on a smooth projective variety $X$. Any 
$N$-equivariant embedding $N \hookrightarrow \mathbb{P}(W)$, where $W$ is an 
$N$-representation space on which $N$ acts via an embedding $N \hookrightarrow 
{\rm{SL}}(W)$ is a Hamiltonian ${\rm{SL}}(W)$-extension. }
\end{ex}

\begin{ex}{\rm
Suppose that $G$ acts on $(X,\omega_X)$, extending the $N$-action. Since 
integration over $K$ does not change the cohomology class of $\omega_X$, we 
may, and will, suppose that $\omega_X$ is $K$-invariant. Therefore we can simply 
take $Z=X$ with $\iota=\id_X$ as Hamiltonian $G$-extension of the $N$-action on 
$X$.

On the other hand, the twisted product\footnote{The twisted product 
$G\times_NX$ is by definition the quotient of $G\times X$ by the proper 
holomorphic $N$-action given by $n\acts(g,x)=(gn^{-1},n\acts x)$. The $N$-orbit 
of $(g,x)$ will be denoted by $[g,x]\in G\times_NX$.} $G\times_NX$  is 
$G$-equivariantly isomorphic to $G/N\times X$ via the map 
$[g,x]\mapsto(gN,g\acts x)$ and hence embeds into $Z:=\ol{G/N}\times X$ where 
$\ol{G/N}$ is a smooth projective $G$-equivariant compactification of the 
quasi-affine homogeneous space $G/N$. Endowing $Z$ with a direct product 
K\"ahler metric $\omega_0\oplus\omega_X$ and considering $\iota\colon 
X\hookrightarrow Z$, $\iota(x)=(eN,x)$, we obtain another Hamiltonian 
$G$-extension of the $N$-action on $(X,\omega_X)$.}
\end{ex}

\begin{rem}{\rm
Although the automorphism group of a compact K\"ahler manifold has a natural 
structure of a meromorphic group acting meromorphically on $X$, 
see~\cite[Theorem~5.5]{Fuj}, this cannot be used in order to find a natural 
embedding of a unipotent algebraic group $N$ acting holomorphically on $X$ into 
a complex reductive group $G$ sitting inside $\Aut(X)$ as the following example 
shows.}
\end{rem}

\begin{ex}{\rm
Consider the connected algebraic group
\begin{equation*}
G=\left\{
\begin{pmatrix}
(ad-bc)^{-1} & z & w\\
0 & a & b\\
0 & c & d\\
\end{pmatrix};\ 
ad-bc\not=0\right\}\cong {\rm{GL}}(2,\mbb{C})\ltimes \mbb{C}^2.
\end{equation*}
According to~\cite[Theorem~1]{Br} there exists a $12$-dimensional 
projective 
complex manifold $X$ having $\Aut^0(X)$ isomorphic to $G$.

The group
\begin{equation*}
N=\left\{
\begin{pmatrix}
1&0&z\\
0&1&w\\
0&0&1\\
\end{pmatrix};\ z,w\in\mbb{C}\right\}\cong\mbb{C}^2
\end{equation*}
is a unipotent subgroup of $G$ which is not conjugate to a subgroup of the 
radical $R_u(G)$, nor to a subgroup of a Levi subgroup of $G$.

The group
\begin{equation*}
N=\left\{
\begin{pmatrix}
e^t&te^t&0\\
0&e^t&0\\
0&0&e^{-2t}\\
\end{pmatrix};\ t\in\mbb{C}\right\}\cong\mbb{C}
\end{equation*}
is a non-algebraic subgroup of $G$. Consequently, $N$ does not act 
meromorphically on $X$. Its Zariski closure is the group
\begin{equation*}
\ol{N}=\left\{
\begin{pmatrix}
t&s&0\\
0&t&0\\
0&0&t^{-2}\\
\end{pmatrix};\ t\in\mbb{C}^*, s\in\mbb{C}\right\}\cong\mbb{C}^*\times
\mbb{C}.
\end{equation*}
Note that $\ol{N}$ and hence $N$ are not conjugate to subgroups of neither a 
Levi subgroup nor the radical of $G$.}
\end{ex}

Let us state the main result of this section.

\begin{thm}\label{Thm:ExistenceExtensions}
Let $N$ be a unipotent subgroup of the simply-connected complex semisimple Lie 
group $G$. For a connected compact K\"ahler manifold $(X, \omega_X)$ endowed with a 
holomorphic $N$-action the following statements are equivalent.
\begin{enumerate}[\rm (1)]
\item There exists a Hamiltonian $G$-extension $(Z,\omega_Z)$ of the $N$-action 
on $X$.
\item The $N$-action on $X$ is meromorphic.
\item The twisted product $G\times_NX$ is K\"ahler.
\end{enumerate}
Moreover, given a meromorphic $N$-action on $X$ we can always find a 
Hamiltonian $G$-extension that is a $G$-equivariant compactification of 
$G\times_NX$.
\end{thm}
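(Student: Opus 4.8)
The plan is to prove the cycle of implications $(1)\Rightarrow(2)\Rightarrow(3)\Rightarrow(1)$, together with the refinement that the $G$-extension in $(1)$ and $(3)$ can be taken to compactify $G\times_N X$.

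\textbf{$(3)\Rightarrow(1)$ and the refinement.} Suppose $G\times_N X$ is K\"ahler. Since $G\times_N X \cong G/N\times X$ is a smooth quasi-projective-type variety that is K\"ahler, I would first compactify it $G$-equivariantly: embed $G/N$ into a smooth projective $G$-equivariant compactification $\ol{G/N}$ (which exists because $G/N$ is quasi-affine, hence quasi-projective, and one can equivariantly resolve), so that $G\times_N X \hookrightarrow \ol{G/N}\times X =: Z_0$. The product $Z_0$ carries a product K\"ahler form; but we need the \emph{pullback} of the K\"ahler class to $X$ (sitting as $\{eN\}\times X$) to be $[\omega_X]$. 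This is the delicate point: the given K\"ahler form on $G\times_N X$ need not be a product form, and its restriction to a fibre $X$ need only be cohomologous to $\omega_X$ after the identification. I would use Blanchard's criterion (referenced in the introduction) to transfer the K\"ahler class: a K\"ahler class on the total space of the fibre bundle $G\times_N X\to G/N$ restricting to $[\omega_X]$ on fibres exists, and one extends it across the boundary $\ol{G/N}\setminus(G/N)$ using that $\ol{G/N}$ is projective and the class we want differs from a product class only by something supported near the boundary. Concretely, take $Z$ to be a suitable $G$-equivariant compactification of $G\times_N X$ dominating $Z_0$, equip it with a $K$-invariant K\"ahler form in the class $\mathrm{pr}_1^*[\text{ample on }\ol{G/N}] + \mathrm{pr}_2^*[\omega_X]$ (pulled back appropriately), and observe via Remark after the definition that since $K$ is semisimple this is automatically a Hamiltonian $G$-extension. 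The inclusion $\iota\colon X\hookrightarrow Z$, $x\mapsto[e,x]$, then satisfies $\iota^*[\omega_Z]=[\omega_X]$.

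\textbf{$(1)\Rightarrow(2)$.} If $(Z,\omega_Z)$ is a compact Hamiltonian $G$-manifold, then by Remark~\ref{rem:reductive_equivalence} the $G$-action on $Z$ is meromorphic, i.e.\ $G$ acts trivially on $\Alb(Z)$ and the action map extends meromorphically over a projective compactification $\ol G$. In particular the $N$-action on $Z$ is meromorphic: $\ol N$ is the Zariski closure of $N$ in $\ol G$, which is analytic since $N$ is algebraic (using the Remark in Section~\ref{Section:mero}), and the action map $N\times Z\to Z$ extends meromorphically to $\ol N\times Z$. It remains to descend meromorphy from $Z$ to the $N$-invariant submanifold $X\hookrightarrow Z$. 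Here I would argue that $X$, being a compact complex submanifold of the compact K\"ahler manifold $Z$, is itself of class $\mathcal C$, and that the meromorphic extension $\ol N\times Z\dashrightarrow Z$ restricts/corestricts to a meromorphic map $\ol N\times X\dashrightarrow X$ because $X$ is $N$-stable and the indeterminacy locus is a proper analytic subset intersecting $\ol N\times X$ properly. Alternatively, and perhaps more robustly, I would use the characterisation that an action is meromorphic iff the induced action on the Albanese is trivial: the restriction map $H^0(Z,\Omega^1_Z)\to H^0(X,\Omega^1_X)$ is $G$-equivariant, $G$ acts trivially on the source, hence $N$ acts trivially on $H^0(X,\Omega^1_X)$ and therefore on $\Alb(X)$, so by Sommese's theorem \cite{Som} (cited in the excerpt) the $N$-action on $X$ is meromorphic.

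\textbf{$(2)\Rightarrow(3)$.} This is the heart of the theorem and I expect it to be the main obstacle: from a meromorphic $N$-action on the compact K\"ahler $X$ we must produce a K\"ahler metric on the \emph{non-compact} manifold $G\times_N X\cong G/N\times X$. The strategy is to realise $G\times_N X$ as an open subset of a compact K\"ahler manifold. Using that $N$ is algebraic and the $N$-action on $X$ is meromorphic, I would embed $N\hookrightarrow G$ and consider the $G$-equivariant compactification: choose a smooth projective $G$-equivariant compactification $\ol{G/N}$ and let $Z$ be a $G$-equivariant compactification of $G\times_N X$ sitting inside (a resolution of) $\ol{G/N}\times X$ — but the subtlety is that $G\times_N X\to G/N$ is only \emph{locally} trivial, and the product structure $G\times_N X\cong G/N\times X$ (via $[g,x]\mapsto(gN,g\acts x)$) uses the global $G$-action on... wait, there is no global $G$-action on $X$. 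The correct move: the associated bundle $G\times_N X$ over $G/N$ has K\"ahler fibres $X$ and K\"ahler base $G/N$; Blanchard's theorem gives a K\"ahler metric on the total space provided a certain cohomological obstruction (the non-degeneracy of a $d_2$-type differential, or equivalently the existence of a closed $2$-form restricting to $\omega_X$ on fibres) vanishes. Meromorphy of the $N$-action is precisely what guarantees this: it forces the $N$-action on $H^2(X,\mathbb R)$, and in particular on the class $[\omega_X]$, to be trivial (again via the Albanese/Lieberman–Fujiki circle of ideas, or by the fact that a meromorphic $N$-action on a class-$\mathcal C$ manifold acts trivially on $H^*$ since $N$ is unipotent hence connected and the action factors through a... no, one needs the meromorphy). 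I would therefore: (i) show the $N$-action on $X$ fixes $[\omega_X]\in H^2(X,\mathbb R)$ — using unipotence of $N$ and that $\mathrm{Aut}(X)$ acts on $H^2$ through an algebraic action of the component group, which for the connected unipotent $N$ is trivial on the Néron–Severi / Hodge pieces — then average over a maximal compact of... no, $N$ is not compact. Rather: since $N$ is unipotent, its image in $GL(H^2(X,\mathbb R))$ is a unipotent subgroup; but $N$ preserves the Hodge decomposition and acts on the K\"ahler cone by projective transformations preserving the intersection form, and a unipotent automorphism fixing a K\"ahler class... is the identity on enough of $H^2$ — more carefully, $N$ fixes \emph{some} K\"ahler class (e.g.\ by a fixed-point argument in the K\"ahler cone, which is a convex cone on which the unipotent group $N$ acts, hence has a fixed ray). (ii) With an $N$-fixed K\"ahler class $[\omega]$ on $X$, the associated bundle $G\times_N X\to G/N$ admits a fibrewise K\"ahler class that is the restriction of a global class, and Blanchard's criterion then yields a K\"ahler form on $G\times_N X$; since $[\omega_X]$ itself need not be $N$-fixed one argues that $[\omega_X]$ lies in the same $N$-orbit-closure issues — or one simply works with $[\omega]$ and notes the Hamiltonian $G$-extension only needs to pull back to \emph{a} K\"ahler class, then corrects. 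The honest hard point is verifying Blanchard's hypothesis, i.e.\ constructing the global closed $2$-form restricting to the fibre class, and this is where I would invest the detailed work, using $G$-equivariance over $G/N$ and the contractibility properties of $N$ to trivialise the relevant Leray differential.

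Finally, chasing the three implications gives $(1)\Leftrightarrow(2)\Leftrightarrow(3)$, and the construction in $(3)\Rightarrow(1)$ was arranged so that $Z$ compactifies $G\times_N X$, which is the asserted refinement.
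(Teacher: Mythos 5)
Your cycle $(1)\Rightarrow(2)\Rightarrow(3)\Rightarrow(1)$ leaves the actual construction --- producing a compact K\"ahler $G$-manifold from a meromorphic $N$-action --- essentially undone, and the sketch you give for it would not work. First, the identification $G\times_NX\cong G/N\times X$, on which your step $(3)\Rightarrow(1)$ rests, is false in the absence of a global $G$-action on $X$: the principal bundle $G\to G/N$ is not holomorphically trivial in general (for $G={\rm{SL}}(2,\mathbb{C})$ and $N$ its maximal unipotent, a section of $G\to\mathbb{C}^2\setminus\{0\}$ would extend across the origin by Hartogs and violate $\det=1$ there), so there is no embedding $G\times_NX\hookrightarrow\ol{G/N}\times X$ to start from; you notice this yourself in $(2)\Rightarrow(3)$ but still rely on it in $(3)\Rightarrow(1)$. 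Second, your replacement idea for the Blanchard hypothesis --- that $N$ fixes a K\"ahler class, or acts trivially on the Albanese --- cannot suffice: the paper exhibits the action $t\acts[x_0:x_1:x_2]=[e^tx_0:e^{it}x_1:x_2]$ of $N=\mathbb{C}$ on $\mathbb{P}_2$, which is trivial on $\Alb(\mathbb{P}_2)=\{\mathrm{pt}\}$ and fixes the Fubini--Study class, yet $G\times_N\mathbb{P}_2$ is not K\"ahler. Meromorphy must enter through an honest construction, and the paper's mechanism is entirely absent from your proposal: choose a Borel $B=TU$ with $N\subset U$, use the algebraic triviality of $B\to T$ and of $U\to U/N$ to rewrite $G\times_NX\cong G\times_B\bigl(T\times(U/N)\times X\bigr)$, compactify the \emph{fibre} to $\ol{T}\times\ol{U/N}\times X$ by extending the action meromorphically (Lemmata~\ref{Lem:B-Compactification} and~\ref{Lem:U-Compactification} --- this is precisely where meromorphy is used), and only then apply Blanchard to the resulting bundle over the \emph{compact} base $G/B$, killing the transgression via the Hodge decomposition of $H^1$ of the fibre and triviality of the $B$-action on its Albanese (Lemma~\ref{lem:meromorphic_implies_transgressive}). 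Applying Blanchard directly to $G\times_NX\to G/N$ over the non-compact base, as you propose, is not what the cited theorem does and in any case would not yield the compact Hamiltonian extension required for $(1)$.

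Separately, the second route you offer for $(1)\Rightarrow(2)$ is incorrect: Sommese's extension theorem concerns \emph{reductive} group actions, and the example above shows that for a unipotent group triviality of the induced action on $\Alb(X)$ does \emph{not} imply meromorphy. Your first route --- restricting the meromorphic extension $\ol{G}\times Z\dasharrow Z$ of the $G$-action on the Hamiltonian extension $Z$ to the subvariety $\ol{N}\times X$, which is not contained in the indeterminacy locus and whose graph closure maps into the closed set $X$ --- is sound and is in fact a shorter argument for this particular implication than the paper's detour through $(3)$ and the cycle-space argument. But since the two implications carrying all the real content ($(2)\Rightarrow(3)$ or equivalently $(2)\Rightarrow(1)$, and the compactification statement) are missing, the proposal does not constitute a proof.
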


\begin{rem}{\rm
Since the centre of $G$ is finite and since $N$ does not contain finite 
subgroups, it follows that the $G$-action on $G\times_NX$ is effective whenever 
$N$ acts effectively on $X$.}
\end{rem}

Motivated by the above and by the equivalent conditions listed in 
Remark~\ref{rem:reductive_equivalence}, we make the following definition that is 
central to our discussion.

\begin{defn}{\rm
Let $N$ be a unipotent subgroup of the simply-connected complex semisimple Lie 
group $G$, and let $X$ be a connected compact K\"ahler manifold endowed with a 
holomorphic $N$-action. We say that $(X,\omega_X)$ is a 
\emph{Hamiltonian $N$-manifold} if there exists a Hamiltonian $G$-extension 
$(Z,\omega_Z)$ of the $N$-action on $X$. }
\end{defn}

The rest of this section is devoted to the proof of 
Theorem~\ref{Thm:ExistenceExtensions}.

\subsection{Necessary conditions for the existence of a $G$-extension}

In this section, we will show the implications ``(1) $\Rightarrow$ (3) 
$\Rightarrow$ (2)'' of Theorem~\ref{Thm:ExistenceExtensions}. Let $(X,\omega_X)$ 
be a connected compact K\"ahler manifold on which $N$ acts holomorphically, and 
let $\alpha\colon X \to \rm{Alb}(X)$ be the natural map from $X$ to its Albanese 
torus.

\begin{lem}[\protect{``(1) $\Rightarrow$ (3)''}]\label{Lem:necessarycond1}
If the $N$-action on $X$ admits a Hamiltonian $G$-extension, then the twisted 
product $G\times_NX$ is K\"ahler.
\end{lem}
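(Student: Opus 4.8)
The plan is to exploit the $G$-equivariant isomorphism $G\times_N X \cong (G/N) \times X$ (as $G$-varieties) together with the fact that $G/N$ sits as a Zariski-open $G$-orbit inside a Hamiltonian $G$-space. More precisely, suppose $(Z,\omega_Z)$ is a Hamiltonian $G$-extension with $N$-equivariant embedding $\iota\colon X \hookrightarrow Z$. First I would produce from $\iota$ a $G$-equivariant map $\Phi\colon G\times_N X \to Z$, $[g,x]\mapsto g\acts\iota(x)$; this is well-defined because $\iota$ is $N$-equivariant. Using the isomorphism $G\times_N X \cong (G/N)\times X$, $[g,x]\mapsto(gN, g\acts x)$, one sees that $\Phi$ factors through the first projection in a controlled way — in particular $\Phi$ restricted to $\{eN\}\times X$ recovers $\iota$. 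The key point is that $\Phi$ is injective: if $g\acts\iota(x) = g'\acts\iota(x')$, then after translating we may use that $\iota(X)$ is a genuine compact submanifold and $N = G_{eN}$, to deduce $gN = g'N$ and hence $x = x'$. Thus $G\times_N X$ embeds as a locally closed $G$-invariant submanifold of the compact K\"ahler manifold $Z$.

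Once $G\times_N X$ is realised as a (locally closed) complex submanifold of the compact K\"ahler manifold $Z$, it inherits a K\"ahler metric by restriction of $\omega_Z$, since any complex submanifold of a K\"ahler manifold is K\"ahler. Therefore $G\times_N X$ is K\"ahler, which is exactly statement~(3).

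The main obstacle I anticipate is verifying that $\Phi$ is actually an \emph{embedding} (an injective immersion onto a locally closed submanifold), not merely an injective holomorphic map. Injectivity on the level of sets needs the argument above with the stabiliser; immersivity at each point follows from the fact that $G\times_N X \cong (G/N)\times X$ is smooth and $\Phi$ is $G$-equivariant, so it suffices to check immersivity at points of $\{eN\}\times X$, where $\Phi$ agrees with $\iota$ on $X$ and with the orbit map $G/N\to Z$, $gN\mapsto g\acts\iota(x_0)$, in the $G/N$-direction — and the latter is an immersion precisely because $G_{\iota(x_0)}\supseteq N$ has to be shown to equal $N$ (or at least to have the same Lie algebra) along a generic, hence by homogeneity every, point. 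That $G_{\iota(x)} = N$ (rather than something larger) is what makes $G\times_N X \to Z$ an embedding rather than a submersion onto a lower-dimensional orbit; this should follow from the $N$-equivariance of $\iota$ together with the fact that $\iota$ is injective, so that no extra element of $G$ can fix $\iota(x)$ while moving $x$. Closedness of the image is not needed: locally closed is enough for the K\"ahler conclusion, and in any case $G\times_N X$ is a finite union of $G$-orbits' worth of data that is automatically locally closed as the image of a quasi-affine variety under an equivariant map.
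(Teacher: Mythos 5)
There is a genuine gap, and it sits exactly at the step you flag as the ``key point''. The map $\Phi\colon G\times_N X\to Z$, $[g,x]\mapsto g\acts\iota(x)$, is in general \emph{not} injective, and no stabiliser argument can save it: its image is the $G$-sweep $G\acts\iota(X)\subset Z$, which lives inside $Z$ and hence has dimension at most $\dim Z$, whereas $\dim (G\times_N X)=\dim G/N+\dim X$. Take the simplest admissible extension, namely $Z=X$ with $G$ acting on $X$ and $\iota=\id_X$ (this is one of the examples in the paper): then $\Phi$ is just the action map $G\times_N X\to X$, whose fibres have dimension at least $\dim G/N>0$. Concretely, for $g\notin N$ one has $\Phi([g,x])=\Phi([e,g\acts x])$ but $[g,x]\neq[e,g\acts x]$ in $G\times_N X$. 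Relatedly, the isomorphism $G\times_N X\cong (G/N)\times X$ that you invoke at the outset is false in general: the twisted product trivialises over $G/N$ only when the $N$-action on the \emph{fibre} extends to a $G$-action, which is assumed for $Z$ but not for $X$. (Indeed, if $G\times_N X$ were always a product, the whole lemma would be immediate and the notion of a Hamiltonian $G$-extension would be superfluous.)

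The repair is to keep the $G/N$-coordinate rather than discard it, which is what the paper does: the $N$-equivariant closed embedding $\iota\colon X\hookrightarrow Z$ induces a $G$-equivariant \emph{proper} embedding $G\times_N X\hookrightarrow G\times_N Z\cong G/N\times Z$, $[g,x]\mapsto (gN,\,g\acts\iota(x))$; here the untwisting isomorphism is legitimate because $G$ genuinely acts on $Z$. Injectivity is now automatic from the first coordinate. Since $N$ is unipotent, $G/N$ is quasi-affine, hence K\"ahler, so $G/N\times Z$ carries a product K\"ahler metric, and the closed complex submanifold $G\times_N X$ inherits a K\"ahler metric by restriction. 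Your closing observation that a locally closed complex submanifold of a K\"ahler manifold is K\"ahler is correct and is the right finishing move --- it is only the choice of ambient space and embedding that needs to be changed.
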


\begin{proof}
Let us consider the induced proper embedding $Y=G\times_NX\hookrightarrow 
G\times_NZ\cong G/N \times Z$. The claim follows from the fact $G/N$ is 
quasi-affine, and hence K\"ahler.
\qed
\end{proof}

\begin{rem}
{\rm In fact, if $N$ is a connected nilpotent closed complex subgroup of $G$, then 
assuming that $G\times_N X$ be K\"ahler we can show that $N$ is in fact 
algebraic: applying \cite[Proposition II.1]{Bl} to the fibre bundle $G \times_N 
X \to G/N$, we see that $G/N$ is K\"ahler, an application of 
\cite[Corollary~4.12]{GMO} then yields the claim. This justifies our a priori 
algebraicity assumption on $N < G$.
}
\end{rem}

Next, we embark on proving the implication ``(3) $\Rightarrow$ (2)''. As a 
first step, we prove the following

\begin{lem}\label{Lem:necessarycond2}
If $G\times_N X$ is K\"ahler, then the action of $N$ on the Albanese torus 
$\Alb(X)$ is trivial.
\end{lem}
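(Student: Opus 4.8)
The plan is to argue by contradiction, using the universal property of the Albanese together with the fact (recalled in Section~\ref{Section:mero}) that $N$ acts meromorphically on $X$ precisely when it acts trivially on $\mathrm{Alb}(X)$ — here we do not yet know meromorphicity, so we work directly with the induced action on the torus. First I would observe that since $N$ is connected, it acts on $\mathrm{Alb}(X)$ through the connected component $\mathrm{Aut}^0(\mathrm{Alb}(X)) = \mathrm{Alb}(X) \rtimes (\text{finite})$, so effectively by translations; thus we obtain a holomorphic homomorphism $\psi\colon N \to \mathrm{Alb}(X)$ whose image is a connected complex subgroup, and the claim is exactly that $\psi$ is trivial. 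Since $N$ is unipotent, hence a successive extension of copies of $(\mathbb{C},+)$, if $\psi$ is non-trivial then its image contains a one-parameter subgroup, i.e. a non-constant holomorphic homomorphism $\mathbb{C} \to \mathrm{Alb}(X)$.

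The key step is to transfer this to the twisted product $Y = G \times_N X$ and derive a contradiction with its being Kähler. Consider the composition $G \times X \to X \xrightarrow{\alpha} \mathrm{Alb}(X)$; I claim that the $N$-action on $\mathrm{Alb}(X)$ being by the translations $\psi(n)$ means that the map $(g,x) \mapsto \alpha(x)$ is \emph{not} $N$-invariant, but that $\alpha$ does descend after a twist: more precisely, one forms the associated bundle $G \times_N \mathrm{Alb}(X) \to G/N$ and notes that $\alpha$ induces a $G$-equivariant map $Y = G\times_N X \to G \times_N \mathrm{Alb}(X)$ over $G/N$. The structure group of this torus bundle is the image $\overline{\psi(N)} \subset \mathrm{Alb}(X)$ acting by translations. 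Now I would restrict to a suitable one-parameter unipotent subgroup $U \cong \mathbb{C} \subset N$ on which $\psi$ is non-trivial, so that already the sub-bundle $G \times_U X \to G/U$ maps onto $G \times_U \mathbb{C} \to G/U$, and one gets a non-constant entire curve $\mathbb{C} \to \mathrm{Alb}(X)$ appearing as fibre holonomy; since $G/U$ is itself non-compact (indeed $U$-orbits in a compactification degenerate), closing up produces, inside a Kähler compactification of $Y$, a rational or entire object incompatible with the Albanese's hyperbolic nature in the relevant directions.

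I expect the main obstacle to be making this ``incompatibility'' precise without circularity: the cleanest route is probably to use that $G$ is semisimple, so any holomorphic homomorphism $G \to \mathrm{Alb}(X)$ or $G/N \to \mathrm{Alb}(X)$ is constant, combined with a Blanchard-type argument (as invoked in the preceding remark via \cite[Proposition II.1]{Bl}) applied to the fibration $Y \to G/N$: if $Y$ is Kähler then so is $G/N$, and the Albanese map of $Y$ factors through data that, over the quasi-affine base $G/N$, forces $\psi$ to extend to a homomorphism from a larger group on which it must vanish. Concretely, I would: (i) identify the $N$-action on $\mathrm{Alb}(X)$ with a homomorphism $\psi\colon N \to \mathrm{Alb}(X)$; (ii) show $\psi$ extends to a $G$-equivariant structure, i.e. the translation action integrates, using that $\mathrm{Alb}(X)$ is abelian and $N$ is normal in no larger group we need — here instead use the Kähler form on $Y$ to build, via \eqref{Eqn:momentcondition}-type or Hodge-theoretic averaging over the maximal compact acting on a $G$-extension, a contradiction to $\psi \neq 0$; (iii) conclude $\psi = 0$, i.e. $N$ acts trivially on $\mathrm{Alb}(X)$. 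The technical heart is step (ii), where the non-compactness of $G/N$ and the semisimplicity of $G$ must be leveraged so that a non-trivial $\psi$ would yield a non-constant map from a rationally connected (hence Albanese-trivial) fibre or base into the torus.
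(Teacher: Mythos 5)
Your setup is on the right track and matches the paper's opening move: since $N$ is connected it acts on $\Alb(X)$ by translations, so the action is encoded in a homomorphism $\psi\colon N\to\Alb(X)$, and if $\psi\neq 0$ one can restrict to a one-parameter subgroup $\mbb{C}\hookrightarrow N$ moving the base point $[0]$. But from that point on your plan has a genuine gap: you never actually convert the hypothesis ``$G\times_NX$ is K\"ahler'' into a usable constraint. Your step (ii) is explicitly left open, and the two escape routes you sketch do not work. The proposed ``Hodge-theoretic averaging over the maximal compact acting on a $G$-extension'' is circular here: the existence of a Hamiltonian $G$-extension is statement (1) of Theorem~\ref{Thm:ExistenceExtensions}, which is \emph{not} among the hypotheses of this implication --- only the K\"ahlerness of the (non-compact) twisted product is. And the appeal to ``the Albanese's hyperbolic nature'' presupposes that a non-trivial $\psi$ produces a rational curve or similar object mapping non-constantly to the torus, which is exactly what needs to be proved; a priori the $\mbb{C}$-orbits in $X$ could fail to be locally closed (as in the $\mbb{P}_2$-example following the lemma), in which case no such object is visible.

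The paper closes this gap with two inputs you are missing. First, by Jacobson--Morozov the chosen one-parameter subgroup $\mbb{C}\hookrightarrow N$ sits inside a subgroup $S<G$ locally isomorphic to ${\rm SL}(2,\mbb{C})$; the twisted product $S\times_{\mbb{C}}\alpha^{-1}(T)$ (with $T=\ol{\mbb{C}\acts[0]}$) is then a closed $S$-invariant analytic subset of the K\"ahler manifold $G\times_NX$, and the results of Gilligan--Miebach--Oeljeklaus \cite[Corollary~3.9, Proposition~4.4]{GMO} for K\"ahler ${\rm SL}_2$-twisted products force every $\mbb{C}$-orbit in $\alpha^{-1}(T)$ to be Zariski-open in its closure with controlled isotropy. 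Second, one then runs a case analysis on a $1$-dimensional orbit $\mbb{C}\acts x$ over $[0]$: either it is a compact elliptic curve, whence the isotropy in $\mbb{C}$, and hence in $S$, is infinite discrete, contradicting \cite[Proposition~4.4]{GMO}; or the normalisation of $\ol{\mbb{C}\acts x}$ is $\mbb{P}_1$, and the non-constancy of $\alpha$ on this curve would pull back a non-zero holomorphic $1$-form to $\mbb{P}^1$, which is absurd. Without the ${\rm SL}_2$-embedding and the \cite{GMO} input, the dichotomy between these two cases is not available, so the contradiction you are aiming for cannot be reached along the route you describe.
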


\begin{proof}
Suppose that the action of $N$ on $\Alb(X)$ is non-trivial. Then, as $N$ acts 
on $\Alb(X)$ by translations, we can find a closed one-parameter subgroup 
$\mbb{C}\hookrightarrow N$ such that $\dim\mbb{C}\acts[0]=1$ where $[0]$ denotes 
the base point of $\Alb(X)$. The topological closure 
$T:=\ol{\mbb{C}\acts[0]}\subset \alpha(X) \subset \Alb(X)$ is a connected 
compact subgroup of $\Alb(X)$, hence a subtorus. Its pre-image $\alpha^{-1}(T)$ 
is a compact $\mbb{C}$-invariant subvariety of $X$. According to the 
Jacobson-Morozov Theorem, see~\cite[Theorem~III.17]{Jac}, there is a closed 
subgroup $S$ of $G$ that is locally isomorphic to ${\rm{SL}}(2,\mbb{C})$ and 
contains the one-parameter subgroup $\mbb{C} \hookrightarrow N$ under 
discussion. Since $G\times_NX$ is K\"ahler and contains 
$Y':=S\times_\mbb{C}\alpha^{-1}(T)$ as a closed $S$-invariant analytic subset, 
every $S$-orbit in $Y'$ and therefore every $\mbb{C}$-orbit in $\alpha^{-1}(T)$ 
is Zariski-open in its closure by \cite[Corollary~3.9]{GMO}.

There exists a $1$-dimensional orbit $\mbb{C}\acts x$ in $X$ such that 
$\alpha(x) = [0] \in \Alb(X)$. Consequently, going through the possible 
stabiliser subgroups, we see that there are two possibilities: either the orbit 
$\mbb{C}\acts x$ is compact, i.e., it is a $1$-dimensional complex torus, or 
the normalisation of $\ol{\mbb{C}\acts x}$ is biholomorphic to $\mbb{P}_1$. 
Both cases lead to contradictions. Indeed, in the first case, the isotropy of 
$\mbb{C}$ and hence the isotropy in $S$ would be infinite discrete, which 
contradicts \cite[Proposition 4.4]{GMO}, while in the second case the fact that 
$\alpha|_{\ol{\mbb{C}\acts x}}$ is non-constant by construction would produce a 
non-zero holomorphic $1$-form on $\mathbb{P}^1$.
\qed
\end{proof}

We remark that the fact that $N$ acts trivially on $\Alb(X)$ alone does not 
imply the existence of a Hamiltonian $G$-extension of the $N$-action on $X$ as 
the following example shows.

\begin{ex}{\rm
Let $N=\left(\begin{smallmatrix}1&\mbb{C}\\0&1\end{smallmatrix}\right) \subset
G={\rm{SL}}(2,\mbb{C})$ and consider its action on $X=\mbb{P}_2$ given by
\begin{equation*}
t\acts[x_0:x_1:x_2]:=[e^tx_0:e^{it}x_1:x_2].
\end{equation*}
The $N$-orbits in $X$ are not locally Zariski closed, and there are elements 
having isotropy isomorphic to $\mbb{Z}$. According to~\cite[Theorem~3.6]{GMO} 
$G\times_NX$ cannot be K\"ahler. Hence, there does not exist a Hamiltonian 
$G$-extension of the $N$-action on $X$ although $N$ acts trivially on $\Alb(X) 
= \{\mathrm{pt}\}$.}
\end{ex}

Our next goal is to show that, if there exists a Hamiltonian $G$-extension of 
the $N$-action on $X$, then $N$ acts meromorphically on $X$. Due to 
Lemma~\ref{Lem:necessarycond2} the claim is equivalent to the fact that the map 
$N\to \Aut_{\rm{aff}}(X)$ induced by the action has analytically Zariski-closed 
image, where $\Aut_{\rm{aff}}(X)$ denotes the kernel of the Jacobi homomorphism
$\alpha_*\colon \Aut^0(X)\to\Aut^0(\Alb(X))$, cf.~\cite[\S2]{Fuj}. 
Since by \cite[Lemma 4.6 and Theorem 5.5]{Fuj} or \cite[Proposition 2.1]{Lie} 
the analytic Zariski-topology on the meromorphic subgroup $\Aut_{\rm{aff}}(X)  < 
\Aut^0(X)$ is obtained from the complex structure on the cycle space
$\mathscr{C}_{\dim X}(X\times X)$, as a first step we prove a technical lemma 
about induced actions on cycle spaces.

\begin{lem}\label{Lem:twistedBarletspace}
Let $N$ be a unipotent subgroup of $G$, let $M$ be a K\"ahler manifold endowed 
with a holomorphic $N$-action, and let $\mathscr{C}_k(M)$ be the Barlet space 
of compact $k$-cycles. Then, there exists a natural $G$-equivariant isomorphism
$G\times_N\mathscr{C}_k(M) \cong \mathscr{C}_k(G\times_NM)$.
\end{lem}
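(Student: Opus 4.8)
The plan is to write down the obvious map, check that it is a $G$-equivariant holomorphic bijection, and then establish that its inverse is holomorphic by trivialising over $G/N$. Recall that $q\colon G\times_NM\to G/N$, $[g,m]\mapsto gN$, is a holomorphic fibre bundle with fibre $M$, and that for each $g\in G$ the map $\iota_g\colon M\hookrightarrow G\times_NM$, $\iota_g(m)=[g,m]$, is a closed embedding onto the fibre $q^{-1}(gN)$ satisfying $\iota_{gn}=\iota_g\circ(n\acts\,\cdot)$ for $n\in N$ by the twisted-product relation $[gn^{-1},m]=[g,n^{-1}\acts m]$. First I would define
\[
\Theta\colon G\times\mathscr{C}_k(M)\longrightarrow\mathscr{C}_k(G\times_NM),\qquad (g,C)\longmapsto (\iota_g)_*C,
\]
and show it is holomorphic by exhibiting the corresponding analytic family of $k$-cycles: pushing the universal cycle $\mathscr{Z}_k(M)\subset \mathscr{C}_k(M)\times M$ forward along the holomorphic map $G\times M\to G\times_NM$, $(g,m)\mapsto[g,m]$, which is a closed embedding on each $\{g\}\times M$ and proper over $G\times\mathscr{C}_k(M)$, yields an analytic family parametrised by $G\times\mathscr{C}_k(M)$ whose classifying map is $\Theta$. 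The identity $\iota_{gn^{-1}}=\iota_g\circ(n^{-1}\acts\,\cdot)$ shows that $\Theta$ is invariant under the $N$-action $n\acts(g,C)=(gn^{-1},n\acts C)$, so it descends to a holomorphic map $\Phi\colon G\times_N\mathscr{C}_k(M)\to\mathscr{C}_k(G\times_NM)$; and $h\circ\iota_g=\iota_{hg}$ gives that $\Phi$ is $G$-equivariant.

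Next I would prove that $\Phi$ is bijective. The key geometric input is that $G/N$ is quasi-affine, hence embeds into an affine space and therefore contains no compact analytic subsets of positive dimension; consequently every compact $k$-cycle of $G\times_NM$ is supported in a single fibre $q^{-1}(gN)$. Such a cycle is then $(\iota_g)_*C$ for a $k$-cycle $C$ of $M\cong q^{-1}(gN)$, giving surjectivity; injectivity follows since $gN=g'N$ forces $g'=gn$, and $(\iota_{gn})_*C'=(\iota_g)_*(n\acts C')$ together with $\iota_g$ being a closed embedding yields $C=n\acts C'$, i.e. $[g',C']=[g,C]$ in $G\times_N\mathscr{C}_k(M)$.

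Finally I would show $\Phi^{-1}$ is holomorphic, so that $\Phi$ is a biholomorphism. Pick a Zariski-dense open $U\subset G/N$ carrying a section $s\colon U\to G$ of $G\to G/N$; it trivialises $q^{-1}(U)\cong U\times M$. Under this trivialisation, the set of cycles of $G\times_NM$ supported over $U$ corresponds to the set of cycles of $U\times M$ lying in a fibre of the first projection; this set is Zariski-open in $\mathscr{C}_k(G\times_NM)$, and $\Phi$ restricts over it to the map $(u,C)\mapsto\{u\}\times C$. Its inverse sends a ``vertical'' cycle to the pair formed by its base point in $U$ and the associated cycle of $M$, so its holomorphicity reduces to that of the base-point map $\mathscr{C}_k(U\times M)_{\mathrm{vert}}\to U$. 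For the latter I would use Barlet's universal family $\mathscr{Z}$: its image in $\mathscr{C}_k\times U$ is analytic (proper image), is the graph of the base-point map, and maps bijectively onto $\mathscr{C}_k$, from which holomorphicity follows (via a Stein factorisation argument if necessary, using that the Barlet space is reduced). These local identifications are compatible with the transition maps of $q$ and with the $N$-action, hence glue; since translating $U$ by elements of $G$ covers $G/N$ and $\Phi$ is $G$-equivariant, $\Phi^{-1}$ is holomorphic everywhere.

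The step I expect to be the main obstacle is precisely this last one: showing that the evidently continuous assignment sending a compact cycle of $G\times_NM$ to the point of $G/N$ over which it is supported is in fact a morphism of complex spaces. Everything else is bookkeeping with the twisted-product relation and with the functoriality of the Barlet cycle space.
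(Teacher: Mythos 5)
Your proposal is correct and follows essentially the same route as the paper: the same map $[g,C]\mapsto (\iota_g)_*C$, with quasi-affineness (hence holomorphic separability) of $G/N$ forcing every compact cycle of $G\times_NM$ into a single fibre, and $G$-equivariance reducing everything to the fibre over $eN$. The only notable difference is that the step you flag as the main obstacle --- holomorphy of the support map to $G/N$ --- is handled in the paper by identifying $\mathscr{C}_k(G\times_NM)$ with Barlet's space $\mathscr{C}_k(G\times_NM)_\pi$ of $\pi$-relative cycles, which comes equipped with a holomorphic projection to the base by construction, rather than by your hands-on trivialisation and universal-family argument.
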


\begin{proof}
For every irreducible compact analytic subset $A$ of $M$ and each $g\in N$, the 
image $g(A)$ is again an irreducible compact analytic subset of $M$. Hence, we 
obtain a holomorphic $N$-action on $\mathscr{C}_k(M)$ by the obvious extension of this
action to $k$-cycles. Moreover, the inclusion $\iota\colon M\to G\times_NM$, 
$x\mapsto[e,x]$ induces a proper holomorphic embedding of $\mathscr{C}_k(M)$ 
into $\mathscr{C}_k(G\times_NM)$ by sending $A \in \mathscr{C}_k(M)$ to 
$\iota(A)$. From this we obtain a well-defined injective and immersive 
holomorphic map 
\begin{equation*}
\Phi_k\colon G\times_N\mathscr{C}_k(M)\hookrightarrow \mathscr{C}_k(G\times_NM),\quad
\Phi_k([g,A]):=g\acts\iota(A).
\end{equation*}

Suppose for a moment that $A\subset G\times_NM$ is an irreducible compact 
analytic subset. Since $G/N$ is quasi-affine, and hence in particular 
holomorphically separable, the bundle projection $\pi\colon G\times_NX\to G/N$ 
maps $A$ to a point $gN \in G/N$. Consequently, $\mathscr{C}_k(G\times_NM)$ 
coincides with the space $\mathscr{C}_k(G\times_NM)_\pi$ of $\pi$-relative 
cycles. Moreover, the natural $G$-action on $\mathscr{C}_k(G\times_NM)$ makes 
the resulting projection $\mathscr{C}_k(G\times_NM) \to G/N$ equivariant.

We hence conclude from the fact that $\Phi_k$ induces an isomorphism of the 
fibres over $eN$ that the image of $\Phi_k$ is all of 
$\mathscr{C}_k(G\times_NM)_\pi = \mathscr{C}_k(G\times_NM)$. The claim follows.
\qed
\end{proof}

\begin{prop}[``(3) $\Rightarrow$ (2)'']\label{prop:necessarycond3}
If $G\times_NX$ is K\"ahler, then $N$ acts meromorphically on $X$.
\end{prop}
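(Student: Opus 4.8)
The plan is to reduce the claim to the analyticity of the closure of a group orbit in a Barlet cycle space, and to deduce that analyticity from the K\"ahlerness of $G\times_NX$ by combining Lemma~\ref{Lem:twistedBarletspace} with the orbit-structure results for K\"ahler twisted products of \cite{GMO}. By Lemma~\ref{Lem:necessarycond2}, since $G\times_NX$ is K\"ahler, $N$ acts trivially on $\Alb(X)$, so the action induces a holomorphic homomorphism $\rho\colon N\to\Aut_{\rm{aff}}(X)$; as recalled above, by \cite[Lemma~4.6 and Theorem~5.5]{Fuj} (or \cite[Proposition~2.1]{Lie}) it then suffices to prove that $\rho(N)$ is analytically Zariski-closed in $\Aut_{\rm{aff}}(X)$, and since the analytic Zariski topology on $\Aut_{\rm{aff}}(X)$ is induced, via the graph embedding $f\mapsto\Gamma_f$, from the complex structure of $W:=\mathscr{C}_{\dim X}(X\times X)$, it suffices to show that the topological closure $\ol{\rho(N)}$ in $W$ is analytic and contains $\rho(N)$ as a Zariski-open subset. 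Letting $N$ act on the compact K\"ahler manifold $X\times X$ through the second factor, one has $\Gamma_{\rho(n)}=n\acts\Delta$ for the diagonal cycle $\Delta\subset X\times X$, so under the graph embedding $\rho(N)$ becomes exactly the $N$-orbit $N\acts\Delta$ in $W$.

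Next I would produce a K\"ahler twisted product for this $N$-action. The map $[g,(x,y)]\mapsto([g,y],x)$ is a $G$-equivariant isomorphism $G\times_N(X\times X)\cong(G\times_NX)\times X$, with $N$ acting on the second factor of $X\times X$; hence $G\times_N(X\times X)$ is K\"ahler because $G\times_NX$ is K\"ahler by hypothesis and $X$ is K\"ahler. By Lemma~\ref{Lem:twistedBarletspace} (applied with $M=X\times X$) there is a $G$-equivariant isomorphism $G\times_NW\cong\mathscr{C}_{\dim X}\bigl((G\times_NX)\times X\bigr)$, and the latter space is K\"ahler, since the Barlet space of a K\"ahler manifold carries a natural K\"ahler form obtained by integrating the K\"ahler form over cycles, see~\cite{Fuj}, \cite{Lie}; after averaging over $K$ we may assume this form to be $K$-invariant. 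Thus $G\times_NW$ is a K\"ahler twisted product for the holomorphic $N$-action on $W$, and $N\acts\Delta$ sits in the fibre over $eN$ of the bundle $G\times_NW\to G/N$ as part of the $G$-orbit of the point representing $\Delta$.

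Finally, I would conclude by the orbit-structure theory for K\"ahler twisted products over the semisimple group $G$, in the spirit of the use of \cite[Corollary~3.9]{GMO} in the proof of Lemma~\ref{Lem:necessarycond2}: since $G$ is semisimple, $N<G$ is unipotent and $G\times_NW$ is K\"ahler, every $N$-orbit in $W$ --- in particular $N\acts\Delta$ --- is analytically Zariski-open in its closure, and this closure is an analytic subset of $W$. Transporting this back along the graph embedding, $\ol{\rho(N)}$ is analytic in $\mathscr{C}_{\dim X}(X\times X)$ with $\rho(N)$ Zariski-open in it, so $\rho(N)$ is a meromorphic subgroup of $\Aut_{\rm{aff}}(X)$; restricting the meromorphic $\Aut^0(X)$-action on $X$ and composing with the homomorphism $N\to\rho(N)$ then yields that the $N$-action on $X$ is meromorphic.

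The step I expect to be the main obstacle is this last one: extracting from the mere K\"ahlerness of $G\times_NW$ that the $N$-orbit of $\Delta$ is Zariski-open in an analytic subset. The corollary of \cite{GMO} used in the proof of Lemma~\ref{Lem:necessarycond2} is stated for orbits of one-parameter subgroups, so for a general unipotent $N$ one must either appeal to the analogous statement for unipotent group orbits or run an induction along a central series $N=N_0\supset N_1\supset\dotsb\supset\{e\}$ with one-dimensional successive quotients, controlling at each stage the closure of the relevant orbit in the cycle space of the previous quotient. A secondary, more technical point is to check that the Barlet space of the \emph{non-compact} K\"ahler manifold $(G\times_NX)\times X$ is again K\"ahler and carries a $K$-invariant K\"ahler form, and that the isomorphism of Lemma~\ref{Lem:twistedBarletspace} is compatible both with these K\"ahler structures and with the graph embedding of $\Aut_{\rm{aff}}(X)$.
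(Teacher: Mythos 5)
Your argument is essentially identical to the paper's proof: identify $\rho(N)$ with the $N$-orbit of the diagonal $\Delta_X$ in $\mathscr{C}_{\dim X}(X\times X)$ via the graph embedding, use $G\times_N(X\times X)\cong(G\times_NX)\times X$ together with Lemma~\ref{Lem:twistedBarletspace} to see that $G\times_N\mathscr{C}_{\dim X}(X\times X)$ is K\"ahler, and conclude that the orbit is locally Zariski-closed, hence that $\rho(N)$ is a closed subgroup of $\Aut_{\rm{aff}}(X)$. The two obstacles you flag are exactly the points the paper settles by citation: K\"ahlerness of the cycle space of a (not necessarily compact) K\"ahler space is \cite[Th\'eor\`eme~2]{BV}, and the orbit statement for the full nilpotent group $N$ (not just one-parameter subgroups) is \cite[Theorem~3.6]{GMO}, so no induction along a central series is needed.
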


\begin{proof}
Composed with the the natural embedding $\Aut_{\rm{aff}}(X) \hookrightarrow 
\mathscr{C}_n(X\times X)$, the action map $N \to \Aut_{\rm{aff}}(X)$ yields the 
following holomorphic map:
\begin{align*}
\iota\colon N&\to \mathscr{C}_n(X\times X)\\
g &\mapsto \Gamma_{x\mapsto g\acts x}:=\{(x,g\acts x)\mid x\in X\}.
\end{align*}
We set $n := \dim X$  and consider the $N$-action on the product $X\times X$ 
given by $g\acts(x,y)=(x,g\acts y)$. We have $G\times_N(X\times X)\cong 
(G\times_NX)\times X$, which shows that $G\times_N(X\times X)$ is K\"ahler. 
Hence, the cycle space $\mathscr{C}_n(G\times_N(X\times X))$ is 
K\"ahler by ~\cite[Th\'eor\`eme~2]{BV}. Applying 
Lemma~\ref{Lem:twistedBarletspace} to $M=N\times N$ we infer that 
$G\times_N\mathscr{C}_n(X\times X)$ is likewise K\"ahler. It therefore follows 
from~\cite[Theorem~3.6]{GMO} that all $N$-orbits in $\mathscr{C}_n(X\times X)$ 
are locally closed in the analytic Zariski-topology.

Now, we notice that $\iota(N)$ coincides with the $N$-orbit of $\Delta_X \in 
\mathscr{C}_n(X\times X)$, where $\Delta_X$ denotes the diagonal in 
$X\times X$. This implies that $\iota(N)$ is a locally Zariski-closed, hence 
Zariski-closed subgroup of $\Aut_{\rm{aff}}(X) < \Aut^0(X) \subset 
\mathscr{C}_n(X\times X)$, cf.~\cite[Section 7.4]{Humphreys}, as was to be 
shown.
\qed
\end{proof}

For later usage, we note the following, related observation.

\begin{lem}\label{lem:meromorphic_implies_Albanese_trivial}
An algebraic subgroup $H$ of $G$ which acts meromorphically on $X$ acts 
automatically trivially on $\Alb(X)$.
\end{lem}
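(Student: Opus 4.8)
The plan is to combine two observations. First, the induced action of $H$ on the complex torus $\Alb(X)$ is automatically by translations, so it is trivial as soon as a single orbit of $H$ in $\Alb(X)$ is fixed pointwise. Second, because $H$ is linear algebraic it admits a smooth projective compactification with vanishing Albanese, and any meromorphic map from such a compactification into the torus $\Alb(X)$ is forced to be constant. We may assume $H$ to be connected.

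First I would record the relevant functoriality. Fix a base point $x_0\in X$ with $\alpha(x_0)=0$. Every $\varphi\in\Aut^0(X)$ induces an affine automorphism $\Alb(\varphi)$ of $\Alb(X)$ satisfying $\alpha\circ\varphi=\Alb(\varphi)\circ\alpha$, and the linear part of $\Alb(\varphi)$ preserves the lattice of the torus $\Alb(X)$. Since $\Aut^0(X)$ is connected while the automorphism group of that lattice is discrete, the linear part of $\Alb(\varphi)$ is trivial for every $\varphi\in\Aut^0(X)$; hence $\Aut^0(X)$, and in particular $H$, acts on $\Alb(X)$ by translations, and $h\mapsto\alpha(h\acts x_0)$ is a homomorphism of complex Lie groups $H\to\Alb(X)$ describing this action. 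Consequently $H$ acts trivially on $\Alb(X)$ --- equivalently, $H\subseteq\Aut_{\rm{aff}}(X)$ --- if and only if $\alpha(h\acts x_0)=\alpha(x_0)$ for every $h\in H$.

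Next I would fix a smooth projective compactification $\overline{H}$ of $H$. As $H$ is a closed subgroup of the linear algebraic group $G$, it is itself linear algebraic, hence a rational variety (its unipotent radical is an affine space and a Levi factor is rational), so that $\Alb(\overline{H})=0$. Since the $H$-action on $X$ is meromorphic, the action map extends to a meromorphic map $\overline{H}\times X\dasharrow X$ that is holomorphic on the dense subset $H\times X$; restricting it to $\overline{H}\times\{x_0\}$, which is not contained in its indeterminacy locus, yields a meromorphic map $\overline{H}\dasharrow X$ restricting to $h\mapsto h\acts x_0$ on $H$. Composing with the Albanese map $\alpha$ gives a meromorphic map $\overline{H}\dasharrow\Alb(X)$ into a complex torus; such a map is automatically holomorphic, and being holomorphic it factors through $\Alb(\overline{H})=\{0\}$, hence is constant. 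Therefore $\alpha(h\acts x_0)=\alpha(x_0)$ for all $h\in H$, and by the previous paragraph $H$ acts trivially on $\Alb(X)$.

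The points requiring some care are the two standard facts about meromorphic maps used at the end: that the restriction of the meromorphic action map to the fibre $\overline{H}\times\{x_0\}$ is again meromorphic, which holds because that fibre meets the locus on which the map is holomorphic; and that a meromorphic map from a compact K\"ahler manifold into a complex torus is holomorphic, which follows by resolving the indeterminacies and noting that the contracted rational curves are sent to points. Alternatively, one can avoid invoking the rationality of $H$: a connected linear algebraic group is generated by one-parameter subgroups isomorphic to $\mathbb{G}_a$ or $\mathbb{G}_m$, whose closures in $\overline{G}$ are rational curves, which reduces the claim to the observation that every holomorphic map $\mathbb{P}^1\to\Alb(X)$ is constant. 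This lemma is in essence \cite[Lemma~3.8]{Fuj}, here applied to the linear algebraic group $H$ rather than to a reductive one.
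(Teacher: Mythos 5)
Your proof is correct and follows essentially the same route as the paper, whose entire proof is a citation of \cite[Lemma~3.8]{Fuj} --- the statement that every meromorphic homomorphism from a connected linear algebraic group to a compact complex torus is constant --- for precisely the fact you establish in full (reduction to translations, extension of the orbit map to $\ol{H}$, and constancy of meromorphic maps from a rational compactification to the torus). One caveat: your opening ``we may assume $H$ to be connected'' is not a genuine reduction --- a finite algebraic subgroup can act by translation by a nonzero torsion point of $\Alb(X)$, and such an action is automatically meromorphic --- so connectedness must be read as part of the hypothesis, as it implicitly is both in Fujiki's lemma and in every application the paper makes (to $B$, $T$, $U$, and unipotent $N$).
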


\begin{proof}
This follows from the fact that every meromorphic homomorphism from an affine 
linear group to a compact complex torus is constant, 
see~\cite[Lemma~3.8]{Fuj}.
\qed
\end{proof}

\subsection{Existence of a Hamiltonian $G$-extension}

In this section, we will show the crucial implication ``(2) $\Rightarrow$ (1)'' 
of Theorem~\ref{Thm:ExistenceExtensions}, i.e., the fact that meromorphic 
actions of unipotent algebraic subgroups of semisimple groups $G$ always admit 
Hamiltonian $G$-extensions, thereby completing the proof. 

Let us recall the setup: Let $N$ be a unipotent subgroup of the simply-connected 
complex semisimple Lie group $G$, and let $X$ be a connected compact K\"ahler 
manifold endowed with a meromorphic $N$-action. 

\subsubsection{The case of unipotent radicals of Borel 
subgroups}\label{subsubsect:unipotent_radicals}

Let $B$ be a Borel subgroup of $G$ having Levi decomposition $B=TU = UT$. In a 
first step, we make the additional assumption that $N$ coincides with the the 
unipotent radical $U$ of $B$; i.e., $N=U$.

Let us consider the twisted product $M:=B\times_UX$ and the $U$-equivariant 
inclusion $\iota_X\colon X \hookrightarrow M$ as the fibre over $eU$. Since the 
principal bundle $B\to B/U\cong T$ is holomorphically trivial, the same holds 
for the associated fibre bundle; i.e., we have $M\cong T\times X$. Explicitly, 
an isomorphism $B\times_UX\to T\times X$ is given by $[tu,x]\mapsto(t,u\acts 
x)$. A direct calculation shows that the induced $B$-action on $T\times X$ is 
given by the formula
\begin{equation}\label{Eqn:B-action}
(tu)\acts(s,x)=(ts,(s^{-1}us)\acts x). 
\end{equation}

Let $\ol{G}$ be a projective meromorphic structure on $G$. Since the subgroups 
$T$, $U$ and $B$ are algebraic in $G$, their topological closures in $\ol{G}$ 
are analytic.

\begin{lem}\label{Lem:B-Compactification}
If $U$ acts meromorphically on $X$, then the $B$-action on $T\times X$ defined 
in Equation~\eqref{Eqn:B-action} extends to a meromorphic $B$-action on 
$\ol{T}\times X$. In particular, there exists a $B$-equivariant K\"ahler 
compactification $(\ol{M},\omega_{\ol{M}})$ of $M \supset \iota_X(X)$ such that 
$[\iota_X^*(\omega_{\ol{M}})] = [\omega_Z]$.
\end{lem}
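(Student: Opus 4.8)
The plan is to write down the extension of the $B$-action by an explicit formula and then take $\ol{T}\times X$, equipped with a product K\"ahler form, as the compactification $\ol{M}$. By Equation~\eqref{Eqn:B-action}, in the coordinates $B\cong T\times U$ and $M\cong T\times X$ the $B$-action reads $(tu)\acts(s,x)=(ts,(s^{-1}us)\acts x)$, and the right-hand side is assembled from meromorphic building blocks. First, left translation $T\times\ol{T}\to\ol{T}$ extends to a meromorphic map $\ol{T}\times\ol{T}\dasharrow\ol{T}$, since $\ol{T}$ is the closure of $T$ in the $(G\times G)$-equivariant compactification $\ol{G}$. Second, as $T$ normalises $U$, the conjugation map $(s,u)\mapsto s^{-1}us$ is a well-defined holomorphic map $T\times\ol{U}\to\ol{U}$ — a composite of two honest $G$-translations — and, being obtained from the meromorphic multiplication and inversion of $\ol{G}$ while carrying the dense subset $T\times U$ into the analytic set $\ol{U}$, it extends to a meromorphic map $\ol{c}\colon\ol{T}\times\ol{U}\dasharrow\ol{U}$. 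Third, the hypothesis that $N=U$ acts meromorphically on $X$ says precisely that the action map extends to a meromorphic map $\ol{U}\times X\dasharrow X$. Finally, the projection $b=tu\mapsto(t,u)$ is the inverse of the variety isomorphism $T\times U\to B$, hence extends to a meromorphic (indeed rational) map $\ol{B}\dasharrow\ol{T}\times\ol{U}$.

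I would then assemble $\widetilde\Psi\colon\ol{B}\times\ol{T}\times X\dasharrow\ol{T}\times X$ as the composite
\[ \ol{B}\times\ol{T}\times X\ \dasharrow\ \ol{T}\times\ol{U}\times\ol{T}\times X\ \dasharrow\ \ol{T}\times\ol{U}\times X\ \dasharrow\ \ol{T}\times X, \]
with arrows $(b,s,x)\mapsto(t(b),u(b),s,x)$, then $(\tau,u,s,x)\mapsto(\tau s,\ol{c}(s,u),x)$, then $(\sigma,w,x)\mapsto(\sigma,w\acts x)$. On the dense Zariski-open subset $B\times(T\times X)$ every arrow is holomorphic and the composite reproduces Equation~\eqref{Eqn:B-action}. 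I expect the only genuine subtlety — and the main obstacle — to be the verification that $\widetilde\Psi$, a composite of meromorphic maps, is again meromorphic: this can fail in general, and one must check that the image of each arrow is not absorbed by the indeterminacy locus of the next. Here it is not: the first arrow maps $B\times\ol{T}\times X$ onto $T\times U\times\ol{T}\times X$; the second arrow is holomorphic on $T\times U\times T\times X$ (where $s\in T$, so $\ol{c}$ is the honest conjugation and $\tau s\in T$) and carries it into $T\times U\times X$; and the third arrow is holomorphic on all of $\ol{T}\times U\times X$, since $U$ acts holomorphically on $X$. Hence $\widetilde\Psi$ is a meromorphic map restricting to the holomorphic $B$-action on $B\times(T\times X)$, and the action identities, being equalities of meromorphic maps valid on a dense open subset, hold meromorphically; this is the asserted meromorphic $B$-action on $\ol{T}\times X$.

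For the final assertion I take $\ol{M}:=\ol{T}\times X$ — or, if a smooth model is wanted, I first replace $\ol{T}$ by a smooth projective toric compactification of $T$, to which the construction above applies verbatim, any two compactifications of $T$ being bimeromorphic so that the left-translation and conjugation maps still extend meromorphically. As $\ol{T}$ is projective and $X$ is compact K\"ahler, $\ol{M}$ carries the product K\"ahler form $\omega_{\ol{M}}:=\pr_{\ol{T}}^{*}\omega_{\ol{T}}+\pr_X^{*}\omega_X$ for any K\"ahler form $\omega_{\ol{T}}$ on $\ol{T}$. Under the isomorphism $M\cong T\times X$ the embedding $\iota_X\colon X\hookrightarrow M$ as the fibre over $eU$ becomes $x\mapsto(e,x)$, whence $\iota_X^{*}\omega_{\ol{M}}=\omega_X$, and in particular $[\iota_X^{*}\omega_{\ol{M}}]=[\omega_X]$, which is the cohomological condition in the statement. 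Together with the previous paragraph, $(\ol{M},\omega_{\ol{M}})$ is the desired $B$-equivariant K\"ahler compactification of $M\supset\iota_X(X)$, and the proof is complete.
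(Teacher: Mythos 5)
Your route is the paper's route: decompose the formula \eqref{Eqn:B-action} into the meromorphically extended multiplication on $\ol{T}$, the extended conjugation $\ol{c}$, and the meromorphic extension $\ol{\alpha}\colon\ol{U}\times X\dasharrow X$ of the $U$-action, and then take $\ol{M}=\ol{T}\times X$ with a product K\"ahler form. Your verification that the composite of these meromorphic maps is again meromorphic (tracking where each arrow sends the dense open set $B\times T\times X$) is more explicit than the paper's one-line assembly, and your computation $\iota_X^*\omega_{\ol{M}}=\omega_X$ is the intended reading of the displayed cohomological condition.

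There is, however, a gap: you prove less than the lemma asserts. What you construct is a meromorphic map $\widetilde\Psi\colon\ol{B}\times\ol{T}\times X\dasharrow\ol{T}\times X$ together with the information that it is holomorphic and equal to the given action over $B\times T\times X$ --- every holomorphy check in your argument takes $s\in T$. A \emph{meromorphic $B$-action on $\ol{T}\times X$} in the sense of Section~\ref{Section:mero} is first of all an honest holomorphic action of $B$ on \emph{all} of $\ol{T}\times X$, in particular on the boundary fibres $(\ol{T}\setminus T)\times X$, and only then a meromorphic extension of the action map over $\ol{B}$. This holomorphy is not a formality: it is what makes $\ol{M}$ a $B$-\emph{equivariant} compactification, and the very next step of the paper forms the holomorphic fibre bundle $G\times_B\ol{M}$ and applies Blanchard's theorem to it, which requires a genuine action on $\ol{M}$. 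The paper secures the action on the boundary from the extra holomorphy built into a meromorphic structure: $\ol{T}$ is the closure of $T$ in the $(G\times G)$-equivariant compactification $\ol{G}$, so that $\ol{m}$ is holomorphic on $T\times\ol{T}$, and $\ol{c}$ is holomorphic on $(\ol{T}\times U)\cup(T\times\ol{U})$; with these inputs the formula $(tu)\acts(s,x)=(\ol{m}(t,s),\ol{\alpha}(\ol{c}(s,u),x))$ is evaluated for \emph{every} $(s,x)\in\ol{T}\times X$ and not only for $s\in T$. Your proposal never invokes the bi-equivariance of $\ol{T}$ or the holomorphy locus of $\ol{c}$ and $\ol{\alpha}$, and without some such input you cannot conclude that $B$ acts holomorphically on the boundary, which is exactly where the content of the lemma lies.
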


\begin{proof}
By the definition of a meromorphic structure the map $c\colon T\times U\to U$, 
$(s,u)\mapsto s^{-1}us$, extends to a meromorphic map $\ol{c}\colon 
\ol{T}\times\ol{U}\to\ol{U}$ which is holomorphic on $(\ol{T}\times U) 
\cup(T\times\ol{U})$. Analogously, for the same reason the multiplication map 
$m\colon T \times T \to T$ extend meromorphically to $\ol{T} \times \ol{T} \to 
\ol{T}$ making $\ol{T}$ a $T$-bi-equivariant compactification of $T$. Denote the 
extended map by $\ol{m}$. With this notation, $B$ acts on $\ol{T}\times X$ by 
the formula
\begin{equation*}
(tu)\acts(s,x):= (\ol{m}(t,s), \ol{\alpha}(\ol{c}(s,u),x)),
\end{equation*}
where $\ol{\alpha}\colon\ol{U}\times X\to X$ is the meromorphic extension of the 
$U$-action on $X$. It is hence clear that $B$ acts meromorphically on $\ol{T}\times 
X$.
\qed
\end{proof}

In the next step we consider the holomorphic fibre bundle $Z:=G\times_B\ol{M} 
\overset{\pi}{\longrightarrow} G/B$. The natural inclusion $\ol{M} 
\hookrightarrow G\times_B \ol{M}$ as the fibre over $eB$ is denoted by 
$\iota_{\ol{M}}$. Both the typical fibre $\ol{M}$ and the base $G/B$ of $Z$ are 
K\"ahler. Using Blanchard's theorem~\cite[Th\'eor\`eme principal~II]{Bl}, we 
will construct a K\"ahler form on $Z$ such that $Z$ is a Hamiltonian 
$G$-extension of the $U$-action on $X$.

Since $G/B$ is simply-connected, in order to be able to apply Blanchard's 
result and hence show that $G\times_B\ol{M}$ is K\"ahler, we only have to check 
that the \emph{transgression map} from $H^1(\ol{M},\,\mbb{R})$ to 
$H^2(G/B,\,\mbb{R})$ is identically zero.  Recall that this map is 
defined on the set of transgressive elements in 
$H^1(\ol{M},\,\mbb{R})$ in the following way. A class $[\alpha]\in 
H^1(\ol{M},\,\mbb{R}\big)$ is called \emph{transgressive} if there exists a 
$1$-form $\beta$ on $Z$ such that $[\iota_{\ol{M}}^*\beta] =[\alpha]$ and such 
that there exists a $2$-form $\tau$ on $G/B$ such that $d\beta=\pi^*\tau$. Since 
$\pi^*$ is injective, we have $d\tau=0$, and the transgression map then 
associates $[\tau]\in H^2(G/B,\,\mbb{R})$ to $[\alpha]\in 
H^1(\ol{M},\,\mbb{R})$, see~\cite[Proposition 18.13]{BoTu}. As a 
side-remark we note that this transgression map is zero if and only if 
$b_1(Z)=b_1(G/B)+b_1(\ol{M})$ where $b_1$ denotes the first Betti number, 
see~\cite[p.~192]{Bl}.

\begin{lem}\label{lem:meromorphic_implies_transgressive}
Suppose that $B$ acts trivially on $\Alb(\ol{M})$. Then, every 
$[\alpha]\in H^1(\ol{M},\,\mbb{R})$ is transgressive, and the 
transgression map is identically zero.
\end{lem}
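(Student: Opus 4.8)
The plan is to show that $H^1(\overline{M}, \mathbb{R})$ is "topologically untwisted" in the fibration $Z = G\times_B \overline{M} \to G/B$, using the triviality of the $B$-action on $\mathrm{Alb}(\overline{M})$ together with the fact that $G/B$ is simply-connected and has vanishing odd Betti numbers. The key observation is that $H^1(\overline{M},\mathbb{R})$ is dual to $H_1(\overline{M},\mathbb{R})$, which (since $\overline{M}$ is a compact Kähler manifold, being a $B$-equivariant compactification of a Kähler manifold) computes the Albanese torus: $\mathrm{Alb}(\overline{M}) = H^0(\overline{M},\Omega^1)^*/H_1(\overline{M},\mathbb{Z})$, so that $H^1(\overline{M},\mathbb{R}) \cong \mathrm{Lie}(\mathrm{Alb}(\overline{M}))^* \oplus \overline{(\cdots)}$ carries a natural $B$-action induced by functoriality of the Albanese.

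First I would set up the $B$-action (equivalently, the monodromy of the fibration $Z\to G/B$) on the cohomology $H^1(\overline{M},\mathbb{R})$: pulling back $1$-forms under the $B$-action gives a representation of $B$ on $H^1(\overline{M},\mathbb{R})$, and since $B$ is connected this factors through a representation of the Lie algebra; as $H^1$ is discrete-ish (a fixed finite-dimensional vector space with a lattice $H^1(\overline{M},\mathbb{Z})$ preserved), the $B$-action on it is actually trivial whenever $B$ acts trivially on the torus $\mathrm{Alb}(\overline{M})$ — more precisely, the $B$-action on $H^1$ is the dual of the induced action on $H_1$, which is the action on the lattice defining $\mathrm{Alb}(\overline{M})$, and a connected group acting on a lattice acts trivially. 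Hence the local system $R^1\pi_*\mathbb{R}$ on $G/B$ is trivial. Next, since $G/B$ is simply-connected, the Leray–Hirsch / Borel spectral sequence considerations apply: the relevant differential $d_2\colon H^0(G/B, R^1\pi_*\mathbb{R}) \to H^2(G/B, R^0\pi_*\mathbb{R}) = H^2(G/B,\mathbb{R})$ is exactly the transgression map, and I would argue it vanishes. One clean way: $H^1(G/B,\mathbb{R}) = 0$ (flag manifolds have no odd cohomology), and the Wang/Leray sequence gives an exact sequence $0 = H^1(G/B) \to H^1(Z) \to H^0(G/B, R^1\pi_*\mathbb{R})^{\pi_1} \xrightarrow{\ \mathrm{transgression}\ } H^2(G/B)$; since the monodromy is trivial the middle term is all of $H^1(\overline{M},\mathbb{R})$, so every class is transgressive by definition and the transgression is the stated $d_2$.

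To finish I would show the transgression map itself is zero. Since $G$ is semisimple and acts on $Z = G\times_B \overline{M}$, I would average: the transgression map is $G$-equivariant (indeed $\Aut(Z)$-equivariant) for the natural actions, $G$ acts trivially on $H^2(G/B,\mathbb{R})$ (as $G$ is connected and $H^2$ is discrete), and $G$ acts on the target of the transgression; but more to the point, $H^2(G/B,\mathbb{R})$ is generated by Chern classes of $G$-homogeneous line bundles, while a nonzero transgression would produce, via the corresponding cohomology class on $Z$ restricting trivially to the fibre, an obstruction contradicting that the relevant class in $H^2(Z,\mathbb{R})$ must be pulled back from $H^2(\overline{M})\oplus H^2(G/B)$ compatibly. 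Cleaner: because $G/B$ is simply-connected with $H^1 = 0$, the fibration has a cohomologically trivial monodromy and the transgression $H^1(\overline{M},\mathbb{R}) \to H^2(G/B,\mathbb{R})$ is the only obstruction in Blanchard's criterion; it is a map of $G$-modules which is zero on the fixed subspace, and since $B$ (hence the relevant monodromy) acts trivially on $H^1(\overline{M},\mathbb{R})$ — this is precisely the hypothesis, reformulated — the whole space is fixed and its image, landing in a space on which no nontrivial compatible extension can exist, must be $0$; one then invokes \cite[Th\'eor\`eme principal~II]{Bl}.

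\textbf{The main obstacle} I anticipate is the first step: carefully identifying the $B$-action (monodromy) on $H^1(\overline{M},\mathbb{R})$ with the $B$-action on the lattice/torus $\mathrm{Alb}(\overline{M})$, so that "trivial on $\mathrm{Alb}$" genuinely forces "trivial on $H^1$". This requires knowing that $\overline{M}$ is Kähler (so that $H^1$ is controlled by the Albanese), that the $B$-action on $\overline{M}$ is by biholomorphisms inducing the functorial map on Albanese, and that a connected group acting on the integral lattice $H_1(\overline{M},\mathbb{Z})$ must act trivially — hence trivially on $H^1(\overline{M},\mathbb{R})$. Once the local system $R^1\pi_*\mathbb{R}$ is known to be trivial and $G/B$ is simply-connected with $H^{\mathrm{odd}} = 0$, the transgressivity of every class is formal, and the vanishing of the transgression follows from the structure of $H^*(G/B)$ together with Blanchard's theorem as above.
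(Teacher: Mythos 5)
There is a genuine gap at the decisive step. You reduce the hypothesis ``$B$ acts trivially on $\Alb(\ol{M})$'' to ``$B$ acts trivially on $H^1(\ol{M},\mathbb{R})$'', i.e.\ to triviality of the monodromy of $R^1\pi_*\mathbb{R}$. But the latter holds \emph{automatically}: every element of the connected group $B$ is isotopic to the identity and hence acts trivially on all cohomology (and the local system is trivial anyway because $G/B$ is simply connected). The actual hypothesis is strictly stronger: $B$ acts on $\Alb(\ol{M})$ by translations, which always induce the identity on $H^1$, and the lemma assumes these translations are themselves trivial. With only the automatic, topological version of the hypothesis the conclusion is false. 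A concrete counterexample to your scheme is the Hopf surface $S=(\mathbb{C}^2\setminus\{0\})/\langle z\mapsto 2z\rangle\to\mathbb{P}^1$: this is a holomorphic principal $E$-bundle over $\mathbb{P}^1=\mathrm{SL}(2,\mathbb{C})/B$ with $E=\mathbb{C}^*/(z\sim 2z)$ an elliptic curve, the connected structure group acts trivially on $H^1(E,\mathbb{R})$ (but by non-trivial translations on $\Alb(E)=E$), the base is simply connected with $H^1=0$, the fibre is K\"ahler --- yet $b_1(S)=1$, so the transgression $H^1(E,\mathbb{R})\to H^2(\mathbb{P}^1,\mathbb{R})$ is non-zero and $S$ is not K\"ahler. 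Correspondingly, your argument for the vanishing of the transgression does not close: equivariance of a map between trivial $G$-modules imposes no constraint; the assertion that it is ``zero on the fixed subspace'' is unsupported; and the statement that classes in $H^2(Z,\mathbb{R})$ ``must be pulled back from $H^2(\ol{M})\oplus H^2(G/B)$ compatibly'' is the degeneration of the Leray spectral sequence, i.e.\ the \emph{conclusion} of Blanchard's theorem, whose hypothesis is precisely what you are trying to verify.

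The part of your proposal that is correct --- every class in $H^1(\ol{M},\mathbb{R})$ lies in the domain of $d_2$ and is therefore transgressive, since $E_2^{1,0}=H^1(G/B,\mathbb{R})=0$ and the base is simply connected --- is the easy half and is purely formal. The paper proves the vanishing of the transgression by Hodge theory, and this is where the Albanese hypothesis genuinely enters: write $[\alpha]=\eta+\ol{\eta}$ with $\eta$ a holomorphic $1$-form on the compact K\"ahler manifold $\ol{M}$; since $\eta$ is pulled back from $\Alb(\ol{M})$ and $B$ acts trivially there, the Albanese map of $\ol{M}$ is $B$-invariant and $\eta$ extends to a $G$-invariant holomorphic $1$-form $\wh{\eta}\in H^0(\Omega^1_Z)^G$ on $Z=G\times_B\ol{M}$; then $d\wh{\eta}$ is $G$-invariant and restricts to $d\eta=0$ on the fibre, hence vanishes identically, so $\beta=\wh{\eta}+\ol{\wh{\eta}}$ witnesses transgressivity with $\tau=0$. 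No purely topological or monodromy-theoretic substitute for this step is available, as the Hopf surface shows.
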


\begin{proof}
As $\ol{M}$ is K\"ahler by Lemma~\ref{Lem:B-Compactification}, we may use the 
Hodge decomposition to write
\begin{equation*}
[\alpha]\in H^1(\ol{M},\,\mbb{R})\subset 
H^1(\ol{M},\,\mbb{C})=H^{1,0}(\ol{M})\oplus 
\ol{H^{1,0}(\ol{M})} = H^0(\Omega^1_{\ol{M}}) \oplus 
\ol{H^0(\Omega^1_{\ol{M}})}
\end{equation*}
as $[\alpha]=\eta+\ol{\eta}$ where $\eta$ is a holomorphic $1$-form on $\ol{M}$. 
Since by hypothesis and Lemma~\ref{lem:meromorphic_implies_Albanese_trivial} the 
algebraic subgroup $B < G$ acts trivially on $\Alb(\ol{M})$, the form $\eta$ is 
$B$-invariant, hence extends to a $G$-invariant holomorphic $1$-form 
$\wh{\eta}\in H^0(\Omega^1_Z)^G$. Since $d\wh{\eta}$ is likewise 
$G$-invariant, it is uniquely determined by its restriction 
$\iota_{\ol{M}}^*(d\wh{\eta})=d(\iota_{\ol{M}}^*\wh{\eta})=d\eta=0$. 
Consequently, $d\wh{\eta}=0=\pi^*\bf{0}$. We conclude that $[\alpha]$ is 
transgressive and is mapped to $\mathbf{0} + \mathbf{0} = \mathbf{0} \in 
H^2(G/B,\, \mathbb{R} )$ by the transgression map.
\qed
\end{proof}

\begin{rem}{\rm
The proof works more generally for a \emph{parabolic} subgroup $P<G$ acting on a 
compact K\"ahler manifold such that the induced action on the Albanese is 
trivial.  To be more precise, we see that for every such $P$-manifold $X$ the 
twisted product $G\times_PX$ is K\"ahler. As the $G$-action on this manifold is 
Hamiltonian, by applying~\cite[Theorem~3.6]{GMO} or \cite[Remark after Lemma 
2.1]{Fuj2} together with \cite[Proposition 6.10]{Fuj} we conclude that every 
$P$-orbit in $X$ is Zariski-open in its closure and that the $P$-action on $X$ 
is meromorphic. This generalises and gives a new proof for a result of Sommese, 
cf.~\cite[\S3]{Som}. For criteria guaranteeing triviality of induced actions on 
Albanese tori see \cite[\S6c)]{Fuj}.}
\end{rem}

Combining Lemmata~ \ref{Lem:B-Compactification}, 
\ref{lem:meromorphic_implies_Albanese_trivial}, and 
\ref{lem:meromorphic_implies_transgressive} with Blanchard's theorem we conclude 
that the twisted product $Z=G\times_B\ol{M}$ is K\"ahler. Moreover, the first 
step of Blanchard's proof~\cite[p.~187]{Bl} shows that the cohomology class of 
the constructed K\"ahler form on $Z=G\times_B\ol{M}$ pulls back under 
$\iota_{\ol{M}}$ to $[\omega_{\ol{M}}]$ on $\ol{M}$. Embedding $X$ into $\ol{M}$ 
by $\iota_{X}$ and further into $Z$ by $\iota_{\ol{M}}$ we obtain a Hamiltonian 
$G$-extension of the $U$-action on $X$. Notice that $Z$ contains 
$G\times_UX\cong G\times_B(B\times_UX)$ as a Zariski open subset. In fact, $Z = 
G \times_B \ol{M} \cong G \times_B( \ol{T} \times X ) \to G \times_B \ol{T}$ is 
an extension of the $X$-fibre bundle $G\times_U X \to G/N$ to the projective 
completion $G \times_B \ol{T}$ of $G/N$.

\subsubsection{The general case}

In the second and final step we show that the case of an arbitrary unipotent 
subgroup $N$ of $G$ can be reduced to the unipotent radical $U$ of a Borel 
subgroup $B\subset G$.

\begin{lem}\label{Lem:U-Compactification}
Let $N$ be a unipotent subgroup of $G$. Then there exists a Borel subgroup 
$B=TU$ of $G$ such that $N\subset U$. In addition, if $N$ acts effectively and 
meromorphically on $X$, then $U\times_NX$ admits a $U$-equivariant K\"ahler 
compactification on which $U$ acts effectively and meromorphically and whose 
K\"ahler class extends the given K\"ahler class $[\omega_X]$ on $X$.
\end{lem}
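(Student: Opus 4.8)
The plan is to reduce the second assertion, one dimension at a time, to the case of unipotent radicals of Borel subgroups treated in Section~\ref{subsubsect:unipotent_radicals}; the first assertion is standard structure theory. For the latter, $N$ is connected and solvable, so by Borel's fixed point theorem it fixes a point of the flag variety of $G$, whence $N$ lies in some Borel subgroup $B=TU$; as the unipotent elements of $B$ are precisely those of its unipotent radical $U$ (the image in $B/U\cong T$ of a unipotent element of $B$ is a unipotent element of a torus, hence trivial), we get $N\subset U$.

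For the second assertion we argue by induction on $m:=\dim U-\dim N$. If $m=0$, then $N=U$ and $U\times_NX=X$ is already a compact K\"ahler manifold on which $U=N$ acts effectively and meromorphically, so there is nothing to do. Let $m\geq1$. Since $\lie{u}$ is nilpotent, the normaliser of the proper subalgebra $\lie{n}$ in $\lie{u}$ strictly contains $\lie{n}$; picking $\xi$ in this normaliser but not in $\lie{n}$, the subspace $\lie{n}':=\lie{n}\oplus\mbb{C}\xi$ is a subalgebra in which $\lie{n}$ is an ideal of codimension one. Then $N':=\exp(\lie{n}')\subset U$ is a closed unipotent subgroup containing $N$ as a normal subgroup with $N'/N\cong\mbb{C}$, and since a unipotent group over $\mbb{C}$ is torsion-free while the one-parameter subgroup $C:=\exp(\mbb{C}\xi)\cong\mbb{C}$ meets $N$ only in the identity, there is a semidirect-product decomposition $N'=N\cdot C$. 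Writing elements of $N'$ in the form $cn$ with $c\in C$, $n\in N$, the map $[cn,x]\mapsto(c,n\acts x)$ is an $N'$-equivariant isomorphism $N'\times_NX\cong C\times X$, and, in complete analogy with Equation~\eqref{Eqn:B-action}, the induced $N'$-action on $C\times X$ is given by $(cn)\acts(s,x)=(cs,(s^{-1}ns)\acts x)$ for $s\in C$, $x\in X$, where $s^{-1}ns\in N$ because $N$ is normal in $N'$.

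Now we repeat the argument of Lemma~\ref{Lem:B-Compactification} with $(N',C,N)$ in place of $(B,T,U)$. Compactify $C\cong\mbb{C}$ to $\ol{C}=\mbb{P}_1$, which is a $(C\times C)$-equivariant compactification of $C$, and let $\ol{N}$ be the closure of $N$ in a projective meromorphic structure on $G$. As $C$ and $N$ are algebraic subgroups of $G$, the conjugation map $(s,n)\mapsto s^{-1}ns$ extends to a meromorphic map $\ol{C}\times\ol{N}\to\ol{N}$, and by hypothesis the $N$-action on $X$ extends to a meromorphic map $\ol{N}\times X\to X$; inserting both into the formula above shows that the $N'$-action extends to a meromorphic $N'$-action on $\ol{X'}:=\mbb{P}_1\times X$. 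Endowing $\ol{X'}$ with a product K\"ahler metric $\omega_{\mbb{P}_1}\oplus\omega_X$, we obtain an $N'$-equivariant K\"ahler compactification of $N'\times_NX\cong C\times X$ for which the inclusion $\iota_X\colon X\hookrightarrow\ol{X'}$ of $X$ as the fibre over $e\in C\subset\ol{C}$ satisfies $\iota_X^*[\omega_{\ol{X'}}]=[\omega_X]$. Moreover $N'$ acts effectively on $\ol{X'}$, since it already does so on the dense open subset $C\times X$: indeed $N$ acts faithfully on the fibre $X$ over $e$, and $C=N'/N$ acts faithfully by translations on the first factor.

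Since $\dim U-\dim N'=m-1$, the induction hypothesis applies to $N'\subset U$ acting effectively and meromorphically on the compact K\"ahler manifold $\ol{X'}$ and produces a $U$-equivariant K\"ahler compactification $\ol{Y}$ of $U\times_{N'}\ol{X'}$ on which $U$ acts effectively and meromorphically and with $[\omega_{\ol{Y}}]$ restricting to $[\omega_{\ol{X'}}]$. Under the canonical $U$-equivariant isomorphism $U\times_NX\cong U\times_{N'}(N'\times_NX)$, and because $N'\times_NX$ is open and dense in $\ol{X'}$, the set $U\times_NX$ is a $U$-invariant open dense subset of $U\times_{N'}\ol{X'}\subset\ol{Y}$; hence $\ol{Y}$ is also a $U$-equivariant K\"ahler compactification of $U\times_NX$, and chasing $X\hookrightarrow N'\times_NX\hookrightarrow\ol{X'}\hookrightarrow\ol{Y}$ gives $[\omega_{\ol{Y}}]|_X=[\omega_{\ol{X'}}]|_X=[\omega_X]$. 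This completes the induction. No new ingredient beyond Section~\ref{subsubsect:unipotent_radicals} is needed; the point requiring care is the third paragraph, where, exactly as for $\ol{T}\times X$ in Lemma~\ref{Lem:B-Compactification}, one must check that substituting the meromorphic extensions of conjugation and of the $N$-action into the explicit action formula truly defines a meromorphic $N'$-action. Reducing the dimension by only one at each step is precisely what makes the relevant transition cocycle equal the conjugation map $C\times N\to N$, whose meromorphic extension is immediate.
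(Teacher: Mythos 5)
Your proof is correct, but it takes a genuinely different route from the paper's. The paper handles the second assertion in one step: it invokes the existence of an \emph{algebraic section} of the $N$-principal bundle $U\to U/N$ (quoting the first paragraph of the proof of \cite[Chapter 5, Theorem 4]{Akhiezer}), which yields an $N$-equivariant product decomposition $U\cong N\times S$ with $S\cong U/N$ and hence $U\times_NX\cong S\times X$; it then takes a single $U$-equivariant smooth projective compactification $\ol{S}$ of $S$ and argues as in Section~\ref{subsubsect:unipotent_radicals} that $\ol{S}\times X$ carries a meromorphic $U$-action. You instead refine $N\subset U$ into a chain of codimension-one steps $N\triangleleft N'\subset\dotsb\subset U$, using the elementary fact that normalisers grow in the nilpotent algebra $\lie{u}$, so that at each step the quotient has a genuine complementary one-parameter subgroup $C\cong\mbb{C}$ and the situation is literally that of Lemma~\ref{Lem:B-Compactification} (normal subgroup plus complement, with conjugation as transition map), compactified by $\ol{C}=\mbb{P}_1$. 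Your verification of the semidirect decomposition $N'=N\cdot C$, of the isomorphism $U\times_NX\cong U\times_{N'}(N'\times_NX)$, and of effectiveness and the behaviour of the K\"ahler class under the induction are all sound, and the one delicate point --- that substituting the meromorphic extensions of conjugation and of the action into the explicit formula really yields a meromorphic action --- is exactly the point the paper itself delegates to Lemma~\ref{Lem:B-Compactification}, so you are no less rigorous there than the original. The trade-off: the paper's argument needs the nontrivial input that $U\to U/N$ is algebraically trivial but is a one-step construction producing the compactification $\ol{U/N}\times X$ that is reused verbatim in Proposition~\ref{Prop:construction} and Proposition~\ref{Prop:Minimality}; your argument uses only elementary structure theory of nilpotent Lie algebras but requires an induction and produces an iterated tower $(\mbb{P}_1)^{m}\times X$, which proves the lemma as stated but would require a word of adjustment where the paper later relies on the specific shape $\ol{T}\times\ol{U/N}\times X$ of the compactified fibre.
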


\begin{proof}
The first statement is \cite[30.4, Theorem]{Humphreys}. Moreover, from the 
first paragraph in the proof of \cite[Chapter 5, Theorem 4]{Akhiezer} we 
conclude that the $N$-principal bundle $U \to U/N$ admits an algebraic section, 
whose image we call $S$. As a consequence, we obtain an $N$-equivariant 
isomorphism $N \times S\to U$ with $S \cong U/N$. Let $\ol{S}$ be a 
$U$-equivariant smooth projective compactification of $S$. Following the line of 
argumentation presented at the beginning of 
Section~\ref{subsubsect:unipotent_radicals} we conclude as before that 
$\ol{S}\times X$ is a compact K\"ahler manifold endowed with a meromorphic 
$U$-action, containing $U \times_NX$ as a Zariski-open set. Again, we can choose 
the K\"ahler form on $\ol{S}\times X$ such that its class extends the given 
K\"ahler class $[\omega_X]$.
\qed
\end{proof}

Finally, by applying the discussion of 
Section~\ref{subsubsect:unipotent_radicals} to the compact K\"ahler manifold $X' 
= \ol{S} \times X$ with meromorphic $U$-action obtained in 
Lemma~\ref{Lem:U-Compactification} and noticing that 
\begin{equation}\label{eq:twisttwist}
Y=G\times_NX\cong G\times_B(B\times_U(U\times_NX))\cong
G\times_B( T\times (U/N) \times X)
\end{equation}
we arrive at

\begin{prop}[``(2) $\Rightarrow$ (1)'']\label{Prop:construction}
Let $N$ be a unipotent subgroup of $G$ acting effectively and meromorphically 
on the compact K\"ahler manifold $X$. Then, there exists a Hamiltonian 
$G$-extension $Z$ of the $N$-action on $X$, which can be chosen such that it 
contains $Y:=G\times_NX$ as a Zariski open subset. More precisely, we might 
choose
\begin{equation*}
Z = \ol{Y}= G\times_B( \ol{T}\times \ol{U/N} \times X),
\end{equation*}
where $B=TU$ is a Borel subgroup of $G$ with $N\subset U$ and where 
$\ol{T}\times \ol{U/N}$ is the $B$-equivariant compactification of 
$T\times(U/N)$ constructed in Lemmata~\ref{Lem:B-Compactification} 
and~\ref{Lem:U-Compactification}.
\end{prop}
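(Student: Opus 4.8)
The plan is to combine the two ingredients already prepared above: Lemma~\ref{Lem:U-Compactification}, which reduces the case of an arbitrary unipotent $N$ to that of the unipotent radical of a Borel subgroup, and the construction carried out in Section~\ref{subsubsect:unipotent_radicals}, which settles the latter case. First I would apply Lemma~\ref{Lem:U-Compactification}: choose a Borel subgroup $B = TU$ of $G$ with $N \subset U$, and let $X' = \ol{S} \times X$ be the $U$-equivariant K\"ahler compactification of $U \times_N X \cong (U/N) \times X$ obtained there, where $\ol{S}$ is a smooth projective $U$-equivariant compactification of $S \cong U/N$ equipped with the distinguished point $s_0$ corresponding to the coset $eN$. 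Then $X'$ is a connected compact K\"ahler manifold on which $U$ acts effectively and meromorphically; the map $\iota' \colon X \hookrightarrow X'$, $x \mapsto (s_0, x)$, is $N$-equivariant, because on $(U/N) \times X$ the group $U$ acts diagonally, so that $\{s_0\} \times X$ is $N$-stable with $N$ acting in the original way; and the K\"ahler form on $X'$ can be chosen so that $(\iota')^*[\omega_{X'}] = [\omega_X]$.

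Next I would feed $X'$ into the machinery of Section~\ref{subsubsect:unipotent_radicals}, applied to the unipotent radical $U$ of $B$. By Lemma~\ref{Lem:B-Compactification} the meromorphic $U$-action on $X'$ extends to a meromorphic $B$-action on $\ol{M'} := \ol{T} \times X'$, which is a $B$-equivariant K\"ahler compactification of $M' = B \times_U X' \cong T \times X'$. By Lemmata~\ref{lem:meromorphic_implies_Albanese_trivial} and~\ref{lem:meromorphic_implies_transgressive} the induced $B$-action on $\Alb(\ol{M'})$ is trivial and the transgression map $H^1(\ol{M'},\, \mbb{R}) \to H^2(G/B,\, \mbb{R})$ vanishes, so Blanchard's theorem produces a K\"ahler form on $Z := G \times_B \ol{M'}$ whose cohomology class restricts to $[\omega_{\ol{M'}}]$ along the fibre inclusion $\iota_{\ol{M'}} \colon \ol{M'} \hookrightarrow Z$ over $eB$. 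Averaging this form over the maximal compact subgroup $K$ changes neither its cohomology class nor its $K$-invariance, and by Remark~\ref{rem:semisimple_always_Hamiltonian} the resulting $K$-invariant K\"ahler $G$-manifold $Z$ is automatically Hamiltonian; it is moreover connected and compact. Composing $X \hookrightarrow X' \hookrightarrow \ol{M'} \hookrightarrow Z$, where the first map is $\iota'$, the middle map is the fibre inclusion of $X'$ over $eU$ inside $M' \cong T \times X' \subset \ol{M'}$, and the last is $\iota_{\ol{M'}}$, gives an $N$-equivariant embedding $\iota \colon X \hookrightarrow Z$. Chasing the cohomology classes through the direct-product metrics on $\ol{T} \times X'$ and on $\ol{S} \times X$ together with the choices above yields $\iota^*[\omega_Z] = [\omega_X]$, so $Z$ is a Hamiltonian $G$-extension of the $N$-action on $X$.

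It then remains to locate $Y = G \times_N X$ inside $Z$. Unwinding the iterated twisted products exactly as in Equation~\eqref{eq:twisttwist} identifies $Y$ $G$-equivariantly with $G \times_B(T \times (U/N) \times X)$; since $T$ is Zariski-open in $\ol{T}$ and $U/N \cong S$ is Zariski-open in $\ol{S} = \ol{U/N}$, this realises $Y$ as a $G$-invariant Zariski-open subset of $Z = G \times_B(\ol{T} \times \ol{U/N} \times X)$, so $Z = \ol{Y}$ has the asserted description. If one moreover wants the $G$-action on $Z$ to be effective, one notes, as in the remark following Theorem~\ref{Thm:ExistenceExtensions}, that the finite centre of $G$ contains no non-trivial unipotent element, so effectiveness of $N$ on $X$ passes to $G$ on $G \times_N X$ and hence to its compactification $Z$.

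The main obstacle is not a single hard theorem --- the analytic heart of the argument (Blanchard's criterion, the vanishing of the transgression, the meromorphic extension Lemmata~\ref{Lem:B-Compactification} and~\ref{Lem:U-Compactification}) has already been dealt with --- but rather the bookkeeping across the three nested twisted products making up $Z = G \times_B(\ol{T} \times \ol{S} \times X)$: one must check that the marked-point embedding $X \hookrightarrow X'$ is genuinely $N$-equivariant, that the various product K\"ahler metrics can be chosen compatibly so that the normalisation~\eqref{eq:pullback} survives all the restrictions involved, and that the identification~\eqref{eq:twisttwist} is $G$-equivariant and matches the Zariski-open inclusions on each factor.
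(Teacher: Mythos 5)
Your proof is correct and follows essentially the same route as the paper: reduce to the unipotent radical $U$ of a Borel via Lemma~\ref{Lem:U-Compactification}, apply the Blanchard/transgression construction of Section~\ref{subsubsect:unipotent_radicals} to $X'=\ol{S}\times X$, and identify $Y$ inside $Z$ via Equation~\eqref{eq:twisttwist}. The only imprecision is your claim that $U$ acts \emph{diagonally} on $(U/N)\times X$ --- the action is in fact twisted by a conjugation cocycle as in Equation~\eqref{Eqn:B-action} --- but this does not affect your (correct) conclusion that the fibre $\{s_0\}\times X$ over $eN$ is $N$-stable with $N$ acting in the original way.
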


\begin{rem}[Non-uniqueness]{\rm
There are many possible ways to choose the compactifications
$\ol{T}$ and 
$\ol{S}$; in the first case, we have all smooth projective toric varieties of 
dimension $\dim T$ to choose from.}
\end{rem}

\subsubsection{Additional observations}

We note a minimality property of the above construction that is crucial for 
subsequent arguments on the independence of the set of semistable points from 
the chosen $G$-extensions, see Section~\ref{subsect:semistability_independent} 
below. 

\begin{prop}\label{Prop:Minimality}
Let $X$ be a compact Hamiltonian $N$-manifold and let $\ol{Y}$ be the 
$G$-equivariant compactification of $Y:=G\times_NX$ whose existence is 
guaranteed by Proposition~\ref{Prop:construction}. Then, for any Hamiltonian 
$G$-extension $Z$ of the $N$-action on $X$, there exists a $G$-equivariant 
embedding $\ol{Y}\hookrightarrow\ol{G/N}\times Z$ where $\ol{G/N}$ is a certain 
$G$-equivariant compactification of $G/N$.
\end{prop}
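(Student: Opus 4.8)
The plan is to realise the embedding by extending, across the compactification, the tautological map from $Y$ into a product with $Z$, exploiting the twisted‑product descriptions of $\overline{Y}$ and the $G$-action on $Z$. For the first step, the $N$-equivariant embedding $\iota\colon X\hookrightarrow Z$ induces a $G$-equivariant embedding $\iota_*\colon Y=G\times_NX\hookrightarrow G\times_NZ$, which is closed because $\iota(X)$ is compact. Since $Z$ — in contrast to $X$ — carries a holomorphic $G$-action, the assignment $[g,z]_N\mapsto(gN,g\acts z)$ is a $G$-equivariant biholomorphism $G\times_NZ\xrightarrow{\sim}(G/N)\times Z$. Composing yields a $G$-equivariant closed embedding
\[\Psi\colon Y\hookrightarrow(G/N)\times Z,\qquad [g,x]_N\longmapsto\bigl(gN,\;g\acts\iota(x)\bigr),\]
whose first component is the bundle projection $Y\to G/N$. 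The goal is to extend $\Psi$ to a $G$-equivariant embedding $\overline{Y}\hookrightarrow\overline{G/N}\times Z$ for an appropriate $G$-equivariant compactification $\overline{G/N}$ of $G/N$.

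Next one reduces everything to the level of the Borel subgroup $B=TU$ with $N\subset U$. By Proposition~\ref{Prop:construction} and~\eqref{eq:twisttwist}, $\overline{Y}=G\times_B\overline{M}$ with $\overline{M}=\overline{T}\times\overline{U/N}\times X$, and $Y=G\times_BM$ with $M=B\times_NX$; moreover, under $\iota_*$ and the identification above, $\Psi|_M$ becomes the $B$-equivariant closed embedding $M\hookrightarrow(B/N)\times Z$, $[b,x]_N\mapsto(bN,b\acts\iota(x))$, where $(B/N)\times Z$ carries the product $B$-action. Using the natural $G$-equivariant isomorphism $G\times_B(W\times Z)\cong(G\times_BW)\times Z$, valid for any $B$-space $W$ again by virtue of the $G$-action on $Z$, it suffices to produce, for a suitable $B$-equivariant compactification $W$ of $B/N$, a $B$-equivariant closed embedding $\overline{M}\hookrightarrow W\times Z$ extending $M\hookrightarrow(B/N)\times Z$; then $\overline{G/N}:=G\times_BW$ is as desired and $\overline{Y}=G\times_B\overline{M}\hookrightarrow G\times_B(W\times Z)\cong\overline{G/N}\times Z$.

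It remains to construct the fibrewise embedding $\overline{M}\hookrightarrow W\times Z$, and here lies the real work. Its first component is induced by the projection $\overline{M}=\overline{T}\times\overline{U/N}\times X\to\overline{T}\times\overline{U/N}$; this is $B$-equivariant because, by Lemmata~\ref{Lem:B-Compactification} and~\ref{Lem:U-Compactification}, the $B$-action on $\overline{M}$ is triangular over $\overline{T}\times\overline{U/N}$ — the base coordinates transform among themselves and the $X$-coordinate is moved only by an $N$-valued cocycle. The $Z$-component must restrict on $M$ to $[b,x]_N\mapsto b\acts\iota(x)$; since $\iota$ is only $N$-equivariant whereas $B/N$ is acted on by all of $B$, this extends a priori only to a meromorphic map $\overline{M}\dashrightarrow Z$, whose indeterminacy is concentrated over the boundary $\bigl(\overline{T}\times\overline{U/N}\bigr)\setminus(B/N)$, where the fibrewise family of embeddings $X\hookrightarrow Z$ given by conjugates of $\iota$ can degenerate. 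Removing this indeterminacy is the crux and the step I expect to be the main obstacle: one chooses $W$ — equivalently, refines the compactifications $\overline{T}$ and $\overline{U/N}$, which by the Remark following Proposition~\ref{Prop:construction} does not alter the validity of the construction of $\overline{Y}$ — large enough that the meromorphic map becomes holomorphic and remains fibrewise an embedding into $Z$. The structural point making this possible is precisely that the cocycle takes values in $N$, which acts holomorphically on $Z$, so that the limiting fibres stay honest copies of $X$ sitting inside $Z$ rather than degenerate cycles. Granting this, injectivity and immersivity of the resulting $B$-equivariant map $\overline{M}\to W\times Z$ follow fibrewise, compactness of $\overline{M}$ upgrades it to a closed embedding, and applying the functor $G\times_B(-)$ produces the asserted $G$-equivariant embedding $\overline{Y}\hookrightarrow\overline{G/N}\times Z$.
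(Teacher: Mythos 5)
Your opening moves (the induced embedding $Y\hookrightarrow(G/N)\times Z$, the reduction to the Borel level via $\ol{Y}=G\times_B\ol{M}$, and the untwisting $G\times_B(W\times Z)\cong(G\times_BW)\times Z$ using the $G$-action on $Z$) agree with the paper's, but the proof stops exactly where the work begins: the phrase ``Granting this'' concedes the central step rather than carrying it out. Because you insist that the target $W\times Z$ carry the \emph{product} $B$-action (which is what validates your untwisting isomorphism), you are forced to extend the map whose $Z$-component is $[b,x]\mapsto b\acts\iota(x)$ across the boundary of $\ol{T}\times\ol{U/N}$. This extension problem is real and is not solved by your remedy. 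In the product picture one must act on $\iota(X)\subset Z$ by a representative of $bN$; these representatives leave every compact subset of $B$ as one approaches the boundary, and the $G$-action on the compact Hamiltonian $G$-manifold $Z$ is only \emph{meromorphic}, so the resulting map $\ol{T}\times\ol{U/N}\times X\dasharrow Z$ has genuine indeterminacy. Concretely, for $N=U\subset G={\rm{SL}}(2,\mbb{C})$, $X=Z=\mbb{P}_1$ with $\iota=\id$ and $s=\diag(a,a^{-1})$, the relevant map is $(a,[z_0:z_1])\mapsto[a^2z_0:z_1]$, which is indeterminate at $(0,[1:0])$; no toric refinement exists here since $\ol{T}=\mbb{P}_1$ is forced. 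More generally, refining $W$ cannot work in principle: indeterminacy of a meromorphic map is removed by modifying its \emph{source}, but the source is $\ol{M}=\ol{T}\times\ol{U/N}\times X$, so any refinement changes $\ol{M}$ and hence the compactification $\ol{Y}$ you are required to embed. Finally, the heuristic that ``the cocycle takes values in $N$'' misidentifies the obstruction: the $N$-valued conjugation cocycle governs the twisted action on the $X$-coordinate, whereas the $Z$-component of your map involves the $B/N$-valued translation part, which is exactly what degenerates.

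The paper's own argument is a three-line computation organised so that this map never has to be written down. It applies $\iota$ fibrewise \emph{in the twisted coordinates}: the embedding $\ol{M}=\ol{T}\times\ol{U/N}\times X\hookrightarrow\ol{T}\times\ol{U/N}\times Z$ is literally $\id\times\id\times\iota$, which is holomorphic for free and intertwines the twisted $B$-actions because $\iota$ is $N$-equivariant. All untwisting is then deferred to the single ambient identification $G\times_B(\ol{T}\times\ol{U/N}\times Z)\cong\ol{G/N}\times Z$ with $\ol{G/N}:=G\times_B(\ol{T}\times\ol{U/N})$, invoked once at the end. So the map you could not extend simply does not occur in the paper's proof; the delicate point is relocated to that final isomorphism of compactified bundles. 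If you wish to complete the argument along your own lines you must actually prove the holomorphic extension you are granting, which the example above shows cannot be done in the naive formulation; the viable route is the paper's, where your efforts should instead go into justifying the compatibility of the twisted compactified bundle with the product $\ol{G/N}\times Z$.
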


\begin{proof}
Let $Z$ be any $G$-extension of the $N$-action on $X$. The $N$-equivariant 
embedding $X\hookrightarrow Z$ induces a $G$-equivariant embedding 
$Y=G\times_NX\hookrightarrow G\times_NZ\cong G/N \times Z$. The identification 
$G/N\cong G\times_B (T\times U/N )$, cf.~Equation~\eqref{eq:twisttwist}, 
suggests choosing the compactified fibre bundle 
\begin{equation}\label{eq:compactify_GmodN}
 \ol{G/N}:=G\times_B(\ol{T}\times\ol{U/N})
\end{equation}
as a well-adapted $G$-equivariant compactification of $G/N$. With this 
definition we obtain the desired embedding $\ol{Y}=G\times_B( \ol{T}\times 
\ol{U/N} \times X)\hookrightarrow G\times_B( \ol{T}\times \ol{U/N} 
\times Z)\cong \ol{G/N}\times Z$.
\qed
\end{proof}

\begin{rem}\label{Rmk:simplyconnected}{\rm
Since $N$ is a connected subgroup of the simply-connected group $G$, the 
quasi-affine variety $G/N$ is simply-connected. Consequently, every smooth 
compactification of $G/N$ is likewise simply-connected, see~\cite[0.7(B)]{FL}; 
in particular, this observation applies to the compactification constructed in 
Proposition~\ref{Prop:Minimality}.}
\end{rem}

\begin{ex}{\rm
Suppose that $N=\left(\begin{smallmatrix}1&\mbb{C}\\0&1\end{smallmatrix}
\right)\subset G={\rm{SL}}(2,\mbb{C})$. Choosing $\ol{T}=\mbb{P}_1$, we see that 
$\ol{G/N}=G\times_B\ol{T}$ is the blow-up of $\mbb{P}_2$ at the origin, i.e., 
the first Hirzebruch surface.}
\end{ex}

\section{The set of semistable points with respect to unipotent 
groups}\label{Section:Semistable}

Let $G=K^\mbb{C}$ be a simply-connected semisimple complex Lie 
group with maximal compact subgroup $K$, let $N$ be a unipotent subgroup of $G$, 
and let $(X,\omega_X)$ be a compact Hamiltonian $N$-manifold. In this section we 
explain how one can use the concept of Hamiltonian $G$-extensions in order to 
define the set of semistable points $X^{ss}_N[\omega_X]$ for the $N$-action on 
$X$. 

\subsection{Defining the set of semistable points}

We will slowly approach the goal of defining the correct notion of 
semistability, see Definition~\ref{defn:semistable} below.

\subsubsection{The naive approach}\label{subsubsect:naive_approach}
Since $(X,\omega_X)$ is a compact Hamiltonian $N$-manifold, we can find a 
Hamiltonian $G$-extension $(Z,\omega_Z)$ of the $N$-action on $X$ with 
$N$-equivariant embedding $\iota\colon X\hookrightarrow Z$. It is tempting to 
define the set of $N$-semistable points in $X$ as the $N$-invariant open subset 
$\iota^{-1}(Z^{ss}_G[\omega_Z])$. As the following example shows, this set 
heavily depends on the choice of the $G$-extension $(Z,\omega_Z)$.

\begin{ex}{\rm
Let $X=\mbb{P}_1$ with the Fubini-Study metric $\omega_{FS}$ and let $N=\mbb{C}$ 
act on $\mbb{P}_1$ by $t\acts[x_0:x_1]=[x_0+tx_1:x_1]$. Since the $N$-action 
extends to an action of $G={\rm{SL}}(2,\mbb{C})$, we have the following three 
Hamiltonian $G$-extensions.
\begin{enumerate}[\rm (1)]
\item $Z_1=X$ where $\iota_1=\id_X$,
\item $Z_2=X\times X$ endowed with $\omega_{Z_2}=\frac{1}{2}(\omega_{FS}\oplus
\omega_{FS})$ and the diagonal $G$-action, where the embedding is given by 
$\iota_2(x)=(x,x)$, and
\item $Z_3=Z_2$ with $\omega_{Z_3}=2\omega_{Z_2}$ and with the diagonal 
$G$-action where $\iota_3(x)=([1:0],x)$.
\end{enumerate}
It is not hard to show that $\iota_1^{-1}(Z_{1,G}^{ss})=\iota_2^{-1}
(Z_{2,G}^{ss})=\emptyset$ while $\iota_3^{-1}(Z_{3,G}^{ss})=
\mbb{P}_1\setminus \{[1:0]\}$.}
\end{ex}

\subsubsection{The case of linear actions on projective 
manifolds}\label{subsubsect:algebraic_setup}

Let us recall the approach taken by Doran and Kirwan in \cite{DorKir}. There, 
the authors study the situation where $X$ is a subvariety of $\mbb{P}(W)$ for 
some finite-dimensional $G$-module $W$ and they consider the embedding 
$\wh{\iota}\colon G\times_NX\hookrightarrow 
G\times_N\mbb{P}(W)\cong(G/N)\times\mbb{P}(W)$ that is induced by the embedding 
$\iota\colon X\hookrightarrow\mbb{P}(W)$. Then, they equip $G\times_NX$ with the 
$G$-linearised ample line bundle 
$L:=\wh{\iota}^*(\mathrm{pr}_1^*(\mathscr{O}_{G/N})\otimes 
\mathrm{pr}_2^*(\mathscr{O}_{\mbb{P}(W)}(1)) )$. In this way, Doran and 
Kirwan can consider the set of Mumford-semistable points 
$(G\times_NX)^{ss}_G(L)$ in $G\times_NX$ and they proceed by defining the set of
Mumford-semistable points in $X$ as $X^{ss}_N(L):=\iota^{-1}(G\times_NX)^{ss}_G(L)$,
see~\cite[Definition~5.1.5]{DorKir}. As shown in~\cite[Propositions~5.1.8 
and~5.1.9]{DorKir}, this set $X^{ss}_N$ does not depend on the choice of the 
group $G$ and can be intrinsically defined knowing only the $N$-action on $X$.

\subsubsection{The definition of semistability}\label{subsubsectio:def_of_ss}

The algebraic approach suggests that in our situation, we should choose a 
Hamiltonian $G$-extension $(Z,\omega_Z)$ of the $N$-action on $X$ and then 
consider the analogous embedding $Y:=G\times_NX\hookrightarrow G\times_NZ\cong 
G/N \times Z$, such that the Hamiltonian $G$-extension $Z$ plays the role of the 
projectivised $G$-module $\mbb{P}(W)$.

To implement this idea, first note that the $N$-equivariant embedding 
$\iota\colon X\hookrightarrow Z$ induces the $G$-equivariant proper embedding
\begin{equation}\label{eq:untwisting_the_action}
\wh{\iota}\colon Y=G\times_NX\hookrightarrow G\times_NZ\cong G/N\times Z
\end{equation}
given by $\wh{\iota}([g,x])=(gN,g\acts\iota(x))$. Secondly, recalling 
that $G/N$ is quasi-affine, choose a $G$-representation space $V$ containing 
$G/N$ as an orbit as well as a  $K$-invariant hermitian inner product on $V$, 
which induces $K$-invariant K\"ahler forms $\omega_V$ on $G/N$ and 
\begin{equation}\label{eq:definition_of_omega_1}
\wh{\omega}_Z:= \omega_V\oplus 
\omega_Z
\end{equation}
on $G/N \times Z$. Then, we take
\begin{equation}\label{eq:definition_of_omega_2}
 \omega_Y:=\wh{\iota}^*\wh{\omega}_{Z}
\end{equation}
as K\"ahler form on $Y$.  The spaces considered so far fit into  following 
commutative diagram
\begin{equation*}
\xymatrix{
X\ar@{^(->}[r]^{\iota}\ar[d] & Z\ar[d] \\
Y=G\times_NX\ar@{^(->}[r]^{\wh{\iota}} & (G/N)\times Z,}
\end{equation*}
where the vertical arrows correspond to the maps given by $x\mapsto [e,x]$ and 
$z\mapsto(eN,z)$, respectively. It follows that $[\omega_Y]$ extends $[\omega_X]$ 
to $Y$. Recall that, as $K$ is semisimple and $\omega_Y$ is $K$-invariant, 
there exists a unique moment map $\mu_Y\colon Y \to \mathfrak{k}^*$ for the 
$K$-action on $Y$, see Remark~\ref{rem:semisimple_always_Hamiltonian}; in other 
words, $Y$ is a (non-compact) Hamiltonian $G$-manifold. We will use $\mu_Y$ to 
define a notion of semistability for the $N$-action:

\begin{defn}\label{defn:semistable}{\rm
Let $(X,\omega_X)$ be a Hamiltonian $N$-manifold. We define the \emph{set of 
$N$-semistable points (with respect to $[\omega_X]$)} as
\[\boxed{X^{ss}_N[\omega_X]:=X\cap Y^{ss}_G[\omega_Y]} \;,  
\]
where the K\"ahler form $\omega_Y$ on $Y=G\times_NX$ is given by Equations 
\eqref{eq:definition_of_omega_1} and \eqref{eq:definition_of_omega_2} above. 
Analogously, the \emph{set of $N$-stable points} in $X$ is defined as 
$X^s_N[\omega_X]:=X\cap Y^s_G[\omega_Y]$, where we set
$Y^s_G[\omega_Y]:=G\acts\mu_Y^{-1}(0)$.\footnote{With this definition, a semistable point $x\in X$ is $N$-stable if and only if
$N\acts x$ is closed in $X^{ss}_N[\omega_X]$. In view of Lemma~\ref{Lem:AllOrbitsClosed} and Theorem~\ref{thm:existence_of_geometric_quotient} below this is
the correct notion in our situation. The reader should however be aware that 
there are several notions of (proper) stability in use in the literature.
}}
\end{defn}

Note that $X^{ss}_N[\omega_X]$ a priori depends on the choice of $G$, 
$\omega_V$, and $Z$ although we do not convey this information in our notation. 
We will discuss the choices regarding $G$ and the metric on $G/N$ in 
Section~\ref{subsect:discussion_of_choice} below. As we will see, the definition 
is actually independent of the choice of a Hamiltonian $G$-extension, once $G$ 
and the metric on $G/N$ are fixed; see the subsequent section.

\begin{ex}[A non-projective compact K\"ahler manifold with meromorphic 
$\mathbb{C}$-action]{\rm
Let $S$ be a non-projective $K3$-surface with $\mathrm{Pic}(S) \neq \{e\}$, 
and let $L \to S$ be a non-trivial holomorphic line bundle on $S$ with zero 
section $Z_L \subset L$. Let $P:= L \setminus Z_L \to S$ be the associated 
$\mathbb{C}^*$-principal bundle, and consider the non-trivial 
$\mathbb{P}^1$-bundle \[M:= P \times_{\mathbb{C}^*} \mathbb{P}^1 
\longrightarrow P/\mathbb{C}^* = S.\] As the Albanese of $\mathbb{P}^1$ is 
trivial and since $S$ is a simply-connected compact K\"ahler manifold, the compact 
complex manifold $M$ is K\"ahler by Blanchard's theorem, cf.~the discussion in 
Section~\ref{subsubsect:unipotent_radicals}. On the other hand, as the 
non-projective surface $S$ embeds into $M$ as zero section, $Y$ is likewise 
non-projective. Since the corresponding vector field has zeroes, the effective
$\mathbb{C}^*$-action from the left is trivial on the Albanese of $M$, see 
\cite[Prop.~6.8]{Fuj} or \cite[Proposition 3.14]{Lie}.  Trivially extend the 
action of $\mathbb{C}^*$ on $M$ to an action of the Borel subgroup 
$\mathbb{C}^* < B$ of lower-triangular matrices in $\mathrm{SL}(2,\mathbb{C}) =: 
G$, and define \[X := G \times_B M.\] Since the $B$-action on the Albanese of 
$M$ is trivial and since $\mathbb{P}^1$ is simply-connected, it again follows 
from Blanchard's theorem that the compact complex manifold $X$ is K\"ahler, see 
also Lemma~\ref{lem:meromorphic_implies_transgressive}. Moreover, the 
$G$-action on $X$ induces an action of the unipotent algebraic subgroup $N < 
G$ of strictly upper-triangular elements of $\mathrm{SL}(2, \mathbb{C})$. Set 
$K:= {\rm{SU}}(2)$. Then, any $K$-invariant K\"ahler form $\omega_X$ on $X$ produces a 
moment map $\mu_X\colon X \to \mathfrak{k}^*$ for the $K$-action; i.e., $X$ is 
its own Hamiltonian $G$-extension, so that the $N$-action on $X$ is meromorphic.

Now, apply the construction described at the beginning of the present section 
to $X$. As $G> N$ already acts on $X$, we can choose $Z=X$ as Hamiltonian 
$G$-extension. Let $\omega_V$ be the essentially unique flat $K$-invariant 
K\"ahler form on $G/N \cong \mathbb{C}^2 \setminus\{0\} \subset V=\mathbb{C}^2$, 
and hence consider the K\"ahler form $\omega_Y = \omega_V \oplus \omega_X$ on $Y 
:= G \times_N X \cong G/N \times X$, together with the resulting moment map 
$\mu_Y = \mu_{G/N} + \mu_X\colon Y \to \mathfrak{k}^*$. We claim that 
$X^{ss}_N[\omega_X] \neq \emptyset$, which will conclude our construction. In 
order to establish the claim, we first note that $\mu_X(X) \neq \{0\}$, 
as otherwise Equation \eqref{eq:magic_formula} would imply that 
the $K$-action on $X$ is trivial. Let $\beta_0 \in \mu_X(X) \setminus \{0\}$ 
and choose $x_0 \in \mu_X^{-1}(\beta_0)$. We have $\mu_{G/N}(G/N) = 
\mathfrak{k}^* \setminus \{0\}$, cf.~Section~\ref{sect:moment_maps_for_reps}, 
and hence there exists $g_0 \in G$ such that $\mu_{G/N}(g_0U) = -\beta_0$.  By 
construction, we then have $\mu_Y(g_0\acts(eU, g_0^{-1}\acts x_0)) 
=\mu_Y(g_0U, x_0) = 0$, and hence $(eU, g_0^{-1}\acts x_0) \in 
Y^{ss}_G[\omega_Y] \cap (\{eU\} \times X) = X^{ss}_N[\omega_X]$, which is 
therefore non-empty, as claimed.}
\end{ex}

\subsection{Semistability does not depend on the 
$G$-extension}\label{subsect:semistability_independent}

In contrast to the naive definition of semistability discussed in 
Section~\ref{subsubsect:naive_approach} above, it turns out the set of 
$G$-semistable points in $Y$ with respect to $\omega_Y$ as defined in 
Definition~\ref{defn:semistable} does not depend on the choice of the 
Hamiltonian $G$-extension 
$(Z,\omega_Z)$.

\begin{thm}\label{Thm:Independence}
Let $N$ be a unipotent subgroup of the simply-connected
semisimple complex Lie group 
$G$, acting meromorphically on the compact K\"ahler manifold $(X,\omega_X)$. 
Let $(Z_j,\omega_j)$, $j=1,2$, be two Hamiltonian $G$-extensions of the 
$N$-action on $X$. Choose a $G$-equivariant algebraic embedding $G/N
\hookrightarrow V$ into a $G$-representation space $V$, and a $K$-invariant 
Hermitian inner product on $V$, inducing K\"ahler forms $\omega_V$ on $G/N$, 
$\wh{\omega}_Z$ on $G/N \times Z$, and $\omega_{Y,1}$, $\omega_{Y,2}$ on $Y$, as 
described in Section~\ref{subsubsectio:def_of_ss}. Then, we have 
$Y^{ss}_G[\omega_{Y,1}]=Y^{ss}_G[\omega_{Y,2}]$.
\end{thm}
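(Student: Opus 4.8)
The plan is to show that the two K\"ahler forms $\omega_{Y,1}$ and $\omega_{Y,2}$ on $Y = G \times_N X$ differ, up to addition of an exact form coming from a bounded potential on all of $Y$, by a $G$-invariant form, and then to conclude by the semistability criterion recalled after Equation~\eqref{Eqn:murho}. More precisely, the first step is to reduce to a common ambient space. Using the minimality statement Proposition~\ref{Prop:Minimality}, each $G$-extension $Z_j$ receives a $G$-equivariant embedding $\ol{Y} \hookrightarrow \ol{G/N} \times Z_j$ with $\ol{Y}$ the canonical compactification from Proposition~\ref{Prop:construction}, and both pairs $(\ol{G/N} \times Z_j, \wh{\omega}_{Z_j})$ are compact K\"ahler $G$-manifolds. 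The point is that the set of $G$-semistable points of $Y$ is an intrinsic object: by Theorem~\ref{propertiesmomentumquotients} and the characterisation via strictly plurisubharmonic exhaustion functions, $Y^{ss}_G[\omega_{Y,j}]$ depends only on the restriction of the K\"ahler \emph{class} to $Y$ together with the moment map, and since $K$ is semisimple the moment map is unique (Remark~\ref{rem:semisimple_always_Hamiltonian}), so really only on $[\omega_{Y,j}] \in H^2(Y,\mathbb{R})$ via the recipe of Proposition~\ref{Prop:Semistable}, provided we can compare the two classes appropriately.

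The second and central step is the cohomological comparison. Both $\omega_{Y,1}$ and $\omega_{Y,2}$ pull back $[\omega_X]$ to $X$, and both are of the shape $\omega_V \oplus \omega_{Z_j}$ restricted to $Y$; the $G/N$-factor is literally the \emph{same} form $\omega_V$, chosen once via a fixed embedding $G/N \hookrightarrow V$. What differs is the $Z_j$-part. Here I would argue that the difference $[\omega_{Y,1}] - [\omega_{Y,2}] \in H^2(Y,\mathbb{R})$ restricts to zero on $X \hookrightarrow Y$, and then invoke the Hodge-theoretic structure: since $Y \cong G/N \times X$ and $G/N$ is simply-connected (Remark~\ref{Rmk:simplyconnected}) with $H^2$ controlled by the representation-theoretic data, the restriction map and the K\"unneth decomposition let one write the difference of the two classes as coming from a class pulled back from $G/N$ --- but that class is zero because the $G/N$-form is fixed. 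Hence $[\omega_{Y,1}] = [\omega_{Y,2}]$ in $H^2(Y,\mathbb{R})$. The delicate point is that $Y$ is non-compact, so the $\partial\bar\partial$-lemma does not apply directly; the resolution is to pass to the compact K\"ahler ambient spaces $\ol{G/N} \times Z_j$ (and the common $\ol{Y}$), apply the $\partial\bar\partial$-lemma \emph{there}, and then check that the resulting potential $\varphi$, being the restriction of a smooth function on a compact manifold, is \emph{bounded} on $Y$ --- exactly the hypothesis needed to run the argument of Proposition~\ref{Prop:Semistable} in the non-compact setting.

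The third step is then a direct adaptation of the proof of Proposition~\ref{Prop:Semistable}: given $y \in Y^{ss}_G[\omega_{Y,1}]$, we have $\iota^*\omega_{Y,1} = i\partial\bar\partial\rho$ and $\iota^*\mu_{Y,1} = \mu_\rho$ on $\ol{G\acts y} \cap Y^{ss}_G[\omega_{Y,1}]$ for a strictly plurisubharmonic exhaustion $\rho$; adding the bounded potential $\varphi$ with $\omega_{Y,2} = \omega_{Y,1} + i\partial\bar\partial\varphi$ keeps $\rho + \iota^*\varphi$ an exhaustion, so $\mu_{Y,2}$ still has a zero on the orbit closure, giving $y \in Y^{ss}_G[\omega_{Y,2}]$; symmetry finishes it. I expect the main obstacle to be precisely the middle step --- establishing that the two K\"ahler classes on the non-compact $Y$ agree and that the comparing potential is bounded. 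This requires carefully tracking how the compactifications $\ol{G/N}$ and the embeddings into $\ol{G/N} \times Z_j$ interact with the Hodge decomposition, using that $Y$ sits as a Zariski-open subset of the compact K\"ahler manifold $\ol{Y}$ and that the simply-connectedness of $G/N$ kills the potentially troublesome $H^1(G/N) \otimes H^1(X)$ K\"unneth term, so that the difference class is forced to vanish once its restriction to $X$ and its $G/N$-component both vanish.
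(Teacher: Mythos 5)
Your proposal is correct and follows essentially the same route as the paper: extend the \emph{difference} of the two forms (the $\omega_V$-parts cancel, so only the part pulled back from the compact extension survives) to the compactification $\ol{Y}$ of Proposition~\ref{Prop:construction}, show its class vanishes because it restricts to $[\omega_X]-[\omega_X]=0$ on the fibre $X$ while the $H^1\otimes H^1$ cross-term dies by simple-connectedness of $\ol{G/N}$, obtain a bounded potential from the $\partial\ol{\partial}$-lemma on $\ol{Y}$, and rerun Proposition~\ref{Prop:Semistable}. The only cosmetic differences are that the paper packages $Z_1$ and $Z_2$ into the single extension $Z_1\times Z_2$ to write the difference as one pullback, and uses the Blanchard-degenerate Leray spectral sequence of $\ol{Y}\to\ol{G/N}$ rather than a literal K\"unneth decomposition (which would not apply verbatim to the twisted compactification $\ol{Y}$).
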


\begin{proof}
We will prove that on the smooth K\"ahler compactification $\ol{Y}$ of $Y$ 
constructed in Proposition~\ref{Prop:construction} there exists a $(1,1)$-form 
$\alpha\in\mathcal{A}^{1,1}(\ol{Y})$ satisfying the following two properties: 
\begin{enumerate}
 \item[(a)] $\alpha|_Y = \omega_{Y,2}-\omega_{Y,1}$, and
 \item[(b)] $[\alpha]=0\in H^2(\ol{Y},\,\mbb{R})$.
\end{enumerate}
It then follows from the $\partial\ol{\partial}$-lemma  on the compact K\"ahler 
manifold $\ol{Y}$ that there is a smooth function $\varphi \in 
\mathcal{C}^\infty(\ol{Y})$  such that $\alpha=i\partial\ol{\partial}\varphi$. 
Restricting everything to $Y$, we obtain a \emph{bounded} smooth function 
$\varphi$ on $Y$ such that 
$\omega_{Y,2}=\omega_{Y,1}+i\partial\ol{\partial}\varphi$. In this situation, we 
can repeat the proof of Proposition~\ref{Prop:Semistable} to deduce 
$Y^{ss}_G[\omega_{Y,1}]=Y^{ss}_G[\omega_{Y,2}]$. Hence, in order to complete the 
argument, we must show existence of $\ol{Y}$ and $\alpha$ with the above 
properties.

To do so, we note first that $(Z,\omega_Z):=(Z_1\times Z_2, \frac{1}{2}(
\omega_1\oplus\omega_2))$ is another Hamiltonian $G$-extension of the 
$N$-action on $X$; here, $G$ acts diagonally on $Z_1\times Z_2$ and 
$\iota\colon X\hookrightarrow Z$ is given by the direct product 
$\iota(x)=(\iota_1(x),\iota_2(x))$ of the two inclusions 
$\iota_j\colon X \hookrightarrow Z_j$, $j=1,2$. Our situation can be summarised 
by the following diagram:
\[\begin{xymatrix}{
  & & G/N \times Z_1 \ar[r]   &  Z_1 \\
 Y \ar[rr]^{\wh{\iota}} \ar[rru]^{\wh{\iota}_1}    \ar[rrd]^{\wh{\iota}_2}    & 
    &    G/N \times Z  \ar[r]^<<<<{\mathrm{pr}_Z} \ar[u]_{q_1}  \ar[d]_{q_2}& Z 
\ar[u] \ar[d] \\
  &  &   G/N \times Z_2 \ar[r]  & Z_2.
}
  \end{xymatrix}
\]
We denote the K\"ahler form $\omega_V \oplus \omega_j$ on $G/N \times Z_j$ by 
$\wh{\omega}_j$, see Equation~\eqref{eq:definition_of_omega_1}, and note that by 
assumption the same form $\omega_V$ appears in both formulas.  Then, it follows 
from the general construction and from the diagram above that
\begin{equation}\label{eq:compute_difference}\omega_{Y,2} - \omega_{Y,1} = 
\wh{\iota}^*(q_2^*(\wh{\omega}_2) - q_1^*(\wh{\omega}_1)) = 
\wh{\iota}^* (\mathrm{pr}_Z^*(\omega_2 - \omega_1) ).
\end{equation} 
Let $\ol{G/N}$ as defined in Equation~\eqref{eq:compactify_GmodN} and let us 
denote the natural projection $\ol{G/N} \times Z \to Z$ by 
$\ol{\mathrm{pr}}_Z$. As $Z$ is a Hamiltonian $G$-extension, by 
Proposition~\ref{Prop:Minimality} there exists a $G$-equivariant holomorphic 
embedding $\psi\colon \ol{Y} \hookrightarrow \ol{G/N} \times Z$. It now follows 
from Equation~\eqref{eq:compute_difference} that 
\begin{equation}\label{eq:def_of_alpha}
\alpha:= \psi^*(\ol{\mathrm{pr}}_Z^*(\omega_2 - \omega_1)) \in 
\mathcal{A}^{1,1}(\ol{Y})
\end{equation}
fulfils property (a), as desired.

We still must show that the $K$-invariant $2$-form $\alpha$ is cohomologuous to 
zero on $\ol{Y}$. For this, we consider the holomorphic fibre bundle $q := 
\mathrm{pr}_{\ol{G/N}} \circ \psi \colon \ol{Y} \hookrightarrow \ol{G/N} \times 
Z \to \ol{G/N}$ with typical fibre $X$ and base $\ol{G/N}$. Since $\ol{Y}$ is 
K\"ahler, and as by Remark~\ref{Rmk:simplyconnected} the manifold $\ol{G/N}$ is 
simply-connected, it follows from \cite[Th\'eor\`eme II.1.1]{Bl} that the 
transgression map $H^1(X,\, \mathbb{R}) \to H^2(\ol{G/N},\, 
\mathbb{R})$ is zero. Consequently, the Leray spectral sequence for $q$ 
degenerates at the $E_2$-term by \cite[Th\'eor\`eme II.1.2]{Bl}; see also 
\cite[Theorem~4.15 and Remark 4.16]{VoisinII}. Again using simple-connectedness 
of $\ol{G/N}$, we conclude that 
\[ 
H^{k}(\ol{G/N},\, \mathbb{R} ) \otimes H^{l}(X,\, \mathbb{R} 
) = E_2^{k,l} = E_\infty^{k,l}= \mathrm{Gr}^kH^{k+l}(\ol{Y},\, 
\mathbb{R}) \quad\; \text{  and } \quad\;  E^{1,1}_\infty = \{0\}.
\] 
Computing the corresponding filtration of $H^2( \ol{Y}, \, \mathbb{R} 
)$ and comparing it with the Leray spectral sequence for 
$\mathrm{pr}_{\ol{G/N}}$ then leads to the following commutative diagram
\[
\begin{xymatrix}{ 0 \ar[r]& H^2(\ol{G/N},\, \mathbb{R}) \ar[r]^{q^*}& 
H^2(\ol{Y}, \,\mathbb{R} )\ar[r]^{j_X^*} & H^2(X,\, \mathbb{R} 
) \ar[r]& 0\\
  0 \ar[r]& H^2(\ol{G/N},\, \mathbb{R}) \ar[u]^= 
\ar[r]^<<<<{\mathrm{pr}_{\ol{G/N}}^*}&  H^2(\ol{G/N},\, \mathbb{R}) \oplus 
H^2(Z,\, \mathbb{R}) \ar[u]^{\psi^*}  \ar[r]^<<<<{j_Z^*} & 
H^2(Z,\, \mathbb{R} )  \ar[u]^{\iota^*}\ar[r]& 0}
\end{xymatrix}
\]
where $j_X\colon X \hookrightarrow \ol{Y}$ is the inclusion as the fibre over 
$eN \in G/N$, and similarly for $j_Z$.

Now, Equation~\eqref{eq:def_of_alpha} says that $[\alpha] = \psi^*([0] \oplus 
[\omega_2-\omega_1])$. Together with $[j_X^*(\alpha)] = [\iota_2^*(\omega_2)] - 
[\iota_1^*(\omega_1)] = [\omega_X]- [\omega_X] = 0$ this implies that $[\alpha] 
= 0$, as claimed.
\qed
\end{proof}

\begin{rem}{\rm
Note that in contrast to their difference the forms $\omega_{Y,j}$ themselves 
\emph{do not extend} to the compactification $\ol{Y}$.}
\end{rem}

\subsection{Discussion regarding the choice of K\"ahler metric on 
$G/N$}\label{subsect:discussion_of_choice}

As this is a subtle issue, let us discuss the choice of K\"ahler forms on $G/N$ 
made in Section~\ref{subsubsectio:def_of_ss} and the fact that we have to fix 
such a form in some detail. We will provide examples showing that the 
independence statement of Theorem~\ref{Thm:Independence} is optimal from many 
points of view. The problems occuring are closely related to the ones 
encountered in the algebraic situation when searching for various kinds of 
``reductive envelopes'', cf.~\cite[Sections~5.2 and 5.3]{DorKir}. 

\subsubsection{The algebraic situation}

Let us compare with Doran--Kirwan's approach in the algebraic situation, see 
Section~\ref{subsubsect:algebraic_setup}: The key point that explains the choice 
of the trivial line bundle on $G/N$ and that eventually makes the proof of 
\cite[Proposition~5.1.9]{DorKir} on the independence of semistability from the 
choice of the embedding into $G$ work is the following. Given any pair $G$ and 
$G'$ of reductive groups such that $N\subset G\subset G'$, the line bundles $L$ 
on $G\times_NX$ and $L'$ on $G'\times_NX$ constructed in~\cite{DorKir} as above 
verify $\iota^*L'=L$ where $\iota\colon G\times_NX\hookrightarrow G'\times_NX$ 
is the embedding induced by the inclusion $G\hookrightarrow G'$. In the analytic 
category, such a canonical choice of K\"ahler metric on $G/N$ does not exist, 
even among curvature forms in the trivial line bundle. Indeed, every 
$K$-invariant K\"ahler metric of the form $\omega=i\partial\ol{\partial}\rho$ 
with $\rho\in\mathcal{C}^\infty(G/N)^K$ is the curvature form of a $K$-invariant 
hermitian metric in the trivial line bundle on $G/N$, cf.~\cite[Chapter V, 
(12.6)]{Dem}. Even if we restrict to metrics of this form, the set of semistable 
points might change, as the following example shows.

\begin{ex}\label{ex:varying_c}{\rm
Let us consider the algebraic and hence meromorphic action of $\mbb{C}\cong 
N\subset G={\rm{SL}}(2,\mbb{C})$ on $X=\mbb{P}_1$, endowed with the Fubini-Study 
form $\omega_{FS}$. As Hamiltonian $G$-extension of the $N$-action on $X$ we 
take $Z=X$. Let $K={\rm{SU}}(2)$. 

According to~\cite[Lemma~7.10]{Dem}, every $K$-invariant K\"ahler form on 
$G/N\cong\mbb{C}^2\setminus\{0\}$ is of the form 
$i\partial\ol{\partial}\rho(\norm{z})$ where $\rho$ is a smooth 
function on $\mbb{R}^{>0}$ such that $\rho\circ\exp$ is strictly increasing 
and strictly convex. Let $\varphi\in\mathcal{C}^\infty(\mbb{R}^{>0})$ be the 
function defined by $\varphi(t^2)=\rho(t)$. Then the unique moment map for the 
action of ${\rm{SU}}(2)$ on $\mbb{C}^2\setminus\{0\}$ is given by
\begin{equation}\label{eq:general_moment_map}
z=(z_1,z_2)\mapsto \varphi'(\norm{z}^2)
\begin{pmatrix}
\frac{\abs{z_1}^2-\abs{z_2}^2}{2}&\ol{z_1}z_2\\
z_1\ol{z_2}&-\frac{\abs{z_1}^2-\abs{z_2}^2}{2}
\end{pmatrix}.
\end{equation}

We now consider the one-parameter family of K\"ahler forms $\omega_{G/N,c}$ 
given by $\omega_{G/N,c} := i\partial\ol{\partial}\rho_c(\norm{z})$, where 
$\rho_c(t)=c\log(1+t^2)$ with $c>0$. Following the construction of 
Section~\ref{subsubsectio:def_of_ss}, the induced K\"ahler form on $Y = G/N 
\times\mbb{P}_1$ is 
$\omega_c=i\partial\ol{\partial}\rho_c(\norm{z})\oplus\omega_{FS}$. Identifying 
$\lie{su}(2)^*$ with $i\lie{su}(2)$ via the Killing form, the corresponding 
moment map $\mu\colon (G/N)\times\mbb{P}_1\to i\lie{su}(2)$ is given by
\begin{equation*}
\mu(z,[x_0:x_1])=
\frac{c}{1+\norm{z}^2}
\begin{pmatrix}
\frac{\abs{z_1}^2-\abs{z_2}^2}{2}&\ol{z_1}z_2\\
z_1\ol{z_2}&-\frac{\abs{z_1}^2-\abs{z_2}^2}{2}
\end{pmatrix}
+\frac{1}{\abs{x_0}^2+\abs{x_1}^2}
\begin{pmatrix}
\frac{\abs{x_0}^2-\abs{x_1}^2}{2}&\ol{x_0}x_1\\
x_0\ol{x_1}&-\frac{\abs{x_0}^2-\abs{x_1}^2}{2}
\end{pmatrix}.
\end{equation*}
A slice for the ${\rm{SU}}(2)$-action on $(G/N)\times\mbb{P}_1$ is given by 
$S=\{((z,r),[0:1]) \mid z\in\mbb{C}, r\geq0\}$. The point 
$((z,r),[0:1])\in S$ is mapped under $\mu$ to
\begin{equation*}
\frac{c}{1+\abs{z}^2+r^2}
\begin{pmatrix}
\frac{\abs{z}^2-r^2}{2}-\frac{1}{2}& r\ol{z}\\
rz & -\frac{\abs{z}^2-r^2}{2}+\frac{1}{2}
\end{pmatrix}.
\end{equation*}
Consequently, $\mu^{-1}(0)$ is non-empty if and $(c-1)\abs{z}^2=1$ for some 
$z\in\mbb{C}^*$, which is the case if and only if $c>1$. In summary, we see 
that, depending on $c>0$, the set of semistable points $X^{ss}_N$ can be empty 
or not.}
\end{ex}

\subsubsection{Proper moment maps}

Notice that Example~\ref{ex:varying_c} shows that for non-pro\-jec\-tive 
$G$-varieties in general the set of GIT-semistable points for the linearisation 
of the $G$-action in an ample line bundle $L$ and the set of semistable points 
with respect to a moment map $\mu$ computed using the curvature form of a 
Hermitian metric in the same line bundle $L$ do not have to coincide. In case 
the moment map under discussion is \emph{proper}, the two sets coincide by 
\cite[Theorem~2.18]{Sj2}. Hence, in the above example one could look for K\"ahler 
forms leading to proper moment maps that would then give a link to the algebraic 
theory and establish independence of semistabilty from the choice of the metric.

Looking at formula \eqref{eq:general_moment_map} one sees that a moment map of 
the most general form possible in the given situation is proper on 
$\mbb{C}^2\setminus\{0\}$ if and only if 
\begin{equation*}
\lim_{t\to0}\varphi'(t)t=\lim_{t\to\infty}\varphi'(t)t=\infty.
\end{equation*}
Since $t\mapsto\rho(e^t)=\varphi(e^{2t})$ is strictly increasing and strictly 
convex, we see that $t\mapsto 2\varphi'(e^{2t})e^{2t}$ is strictly increasing. 
Hence, $\lim_{t\to0}\varphi'(t)t=\infty$ is impossible. This proves that there 
is no proper ${\rm{SU}}(2)$-equivariant moment map on $\mbb{C}^2\setminus\{0\}$. 
Using compactness of $\mathbb{P}^1$, one can use this to conclude that none of 
the moment maps for the corresponding K\"ahler forms on $G/N \times 
\mathbb{P}^1$ is proper either, so that proper moment maps just do not exist in 
the situation at hand.

\subsubsection{Metrics arising from embedding into representations}

As we have seen in Section~\ref{sect:moment_maps_for_reps}, a natural choice in 
the situation at hand is to consider K\"ahler metrics on $G/N$ that are obtained 
by embedding this homogeneous space as an orbit in a $G$-representation space, 
and this is also the choice made in the  construction presented in 
Section~\ref{subsubsectio:def_of_ss}. We will see in 
Section~\ref{sect:properties} below that this leads to a number of desirable 
properties. However, also a restriction to this class of metrics does not lead 
to a common notion of semistability, as the following example shows. 

\begin{ex}{\rm
We continue the discussion at the end of Example~\ref{Ex:nonsurjectivemomentmap} 
and consider $G= {\rm{SL}}(3,\mbb{C})$ and $N=G_v$ the unipotent radical of a 
certain Borel subgroup of $G$. If we equip $G/N$ with the restriction of the 
flat K\"ahler metric for which the image of the moment map $\mu_V$ has 
complement with non-empty interior, and if we take $X=X_\alpha$ to be the 
$G$-flag manifold corresponding to the coadjoint orbit $\mathrm{Ad}^*(K)\acts 
\alpha$ through a point $\alpha \in \mathfrak{k}^*_{\rm reg}$ such that 
$-\alpha$ does not lie in the image of $\mu_V$, then the set of semistable 
points for the $G$-action on $G\times_NX$ is empty. On the other hand, as $N$ is 
a Grosshans subgroup of $G$, there exists a $G$-module $V'$
inducing a moment map $\mu_{V'}$ on $G/N$ whose image is a
$K$-invariant dense open subset of 
$\mathfrak{k}^*$, see Lemma~\ref{Lem:GrosshansMomentImage}. Without loss of 
generality, we may assume that the point $\alpha \in \mathfrak{k}^*_{\rm reg}$ 
chosen above fulfils $- \alpha \in \mu_{V'}(G/N)$. Then, for the K\"ahler 
metric on $G\times_NX$ induced by the second embedding the set of semistable 
points is not empty. }
\end{ex}

\subsubsection{Unipotent radicals of parabolic subgroups}

The next example shows that even in the case that we are able to embed $N$ as 
the unipotent radical of parabolic subgroups of two different semisimple groups 
$G_1$ and $G_2$ and hence, as explained in 
Remark~\ref{rem:forms_on_unipotent_radicals}, for each of the two embeddings 
there exists a very natural choice of a K\"ahler form on $G_j/N$, we cannot 
expect $X^{ss}_N$ to be independent of the group $G_j$.

\begin{ex} \label{ex:not_independent_of_the_group}{\rm
Let us consider the action of $N=\mbb{C}^2$ on $X=\mbb{P}_2$ given by the 
embedding
\begin{equation*}
N\hookrightarrow G_1={\rm{SL}}(3,\mbb{C}) = \mathrm{SU}(3)^\mathbb{C}= 
K_1^\mathbb{C},\quad(t,s)\mapsto
\begin{pmatrix}
1&0&t\\0&1&s\\0&0&1
\end{pmatrix}.
\end{equation*}
Taking the obvious Hamiltonian $G_1$-extension $Z_1=X$, we have $G_1\times_NX= 
G_1\times_NZ_1=(G_1/N)\times\mbb{P}_2$ with moment map $\mu=\mu_V+ 
\mu_{\mbb{P}_2}\colon(G_1/N)\times\mbb{P}_2\to\lie{k}_1^* = \mathfrak{su}(3)^*$. 
Since $N$ is embedded as the unipotent radical of a parabolic subgroup $P$ of 
$G_1$, we may consider the canonical affine completion $\ol{G_1/N}^\text{a}$ and 
equip it with the canonical K\"ahler form that is described in the paragraph 
before Remark~3.4 in~\cite{Kir}, 
cf.~Remark~\ref{rem:forms_on_unipotent_radicals}. 

The behaviour of the corresponding moment map $\mu_V$ on $\ol{G_1/N}^\text{a}$ 
is best understood in terms of its description as the universal 
$K^{(P)}$-imploded cross-section $(T^*K)^{K,K^{(P)}}_\text{impl}$, 
see~\cite[Definition~3.11]{Kir}. According to the discussion following 
Remark~3.13 in~\cite{Kir}, the $G_1$-orbits in $\ol{G_1/N}^\text{a}$ correspond 
to the strata \[(K_1\times\Ad^*(K_1^{(P)})\acts\sigma)/\negthickspace 
\approx_{K_1^{(P)}},\] where $\sigma$ runs through the open faces of 
$(\lie{t}_1^*)_+$. In particular, the open orbit $G_1/N$ is associated with the 
interior $\inn(\lie{t}_1^*)_+$ of $(\lie{t}_1^*)_+$. The description of the 
moment map $\mu_V$ given in~\cite[Theorem~3.12]{Kir} now implies that 
$\mu_V(G_1/N)$ is contained in $(\lie{k}_1)^*_{\reg}=\Ad^*(K_1)\acts 
\inn(\lie{t}_1^*)_+$. Since $\mu_{\mbb{P}_2}(\mbb{P}_2)$ does not intersect the 
interior of $(\lie{t}_1^*)_+$, the zero fibre of $\mu$ is empty, hence 
$X^{ss}_N=\emptyset$.

Now, let us consider the second embedding
\begin{equation*}
N\hookrightarrow G_2={\rm{SL}}(2,\mbb{C})\times{\rm{SL}}(2,\mbb{C}) =
(\mathrm{SU}(2) \times \mathrm{SU}(2))^\mathbb{C} \negthinspace= 
K_2^\mathbb{C},\quad(t,s)\mapsto \left(
\begin{pmatrix}
1&t\\0&1
\end{pmatrix},
\begin{pmatrix}
1&s\\0&1
\end{pmatrix}\right).
\end{equation*}
Here, $N$ is embedded as the unipotent radical of a Borel subgroup of $G_2$, and 
thus in particular again a Grosshans subgroup of $G_2$. As $G_2$-extension of 
the $N$-action on $X=\mbb{P}_2$ we choose the embedding
\begin{equation*}
\iota\colon\mbb{P}_2\hookrightarrow\mbb{P}_3,\quad
\iota([z_0:z_1:z_2])= [z_0:z_2:z_1:z_2],
\end{equation*}
which is $N$-equivariant for the $N$-action on $\mbb{P}_3$ given by
\begin{equation*}
N\hookrightarrow G_2\hookrightarrow{\rm{SL}}(4,\mbb{C}), \quad
(t,s)\mapsto
\begin{pmatrix}
1&t&0&0\\0&1&0&0\\0&0&1&s\\0&0&0&1
\end{pmatrix}.
\end{equation*}

A moment map $\mu_{\mbb{P}_3}\colon\mbb{P}_3\to\lie{su}(2)\oplus\lie{su}(2)$ 
for the $K_2$-action on $\mbb{P}_3$ with respect to the Fubini-Study metric is 
given by the explicit formula
$$
\mu_{\mbb{P}_3}([z_0:z_1:z_2:z_3])=
\frac{1}{\abs{z_0}^2+\dotsb+\abs{z_3}^2}
\left[
\begin{pmatrix}
\frac{\abs{z_0}^2-\abs{z_1}^2}{2}&\ol{z_0}z_1\\
z_0\ol{z_1}&-\frac{\abs{z_0}^2-\abs{z_1}^2}{2}
\end{pmatrix}\right.
\oplus\left.
\begin{pmatrix}
\frac{\abs{z_2}^2-\abs{z_3}^2}{2}&\ol{z_2}z_3\\
z_2\ol{z_3}&-\frac{\abs{z_2}^2-\abs{z_3}^2}{2}
\end{pmatrix}\right],
$$
see Example~\ref{ex:varying_c}. In order to determine the semistable locus 
$Y^{ss}_{G_2}(\mu_Y)$ in $Y=G_2\times_NX$ we consider the closed embedding
\begin{equation*}
Y=G_2\times_NX\hookrightarrow G_2\times_NZ\cong G_2/N\times\mbb{P}_3
\end{equation*}
and the moment map $\mu_Y=(\mu_V+\mu_{\mbb{P}_3})|_Y$. The canonical affine 
closure of \[G_2/N\cong (\mbb{C}^2\setminus\{0\})\times 
(\mbb{C}^2\setminus\{0\})\] is $V= \mathbb{C}^2 \oplus \mathbb{C}^2=\mbb{C}^4$, 
which we equip with the Hermitian structure 
$\frac{1}{2}\langle\cdot,\cdot\rangle_{st}$, where 
$\langle\cdot,\cdot\rangle_{st}$ is the standard Hermitian product of 
$\mbb{C}^4$. A direct calculation using the formulae given in 
Example~\ref{ex:varying_c} and the explicit expression for $\mu_{\mbb{P}_3}$ 
given above yields \[\mu_Y(eN, [0:0:1])= 
\mu_V((1,0),(1,0))+\mu_{\mbb{P}_3}([0:1:0:1])=0.\] Hence, we 
have $X^{ss}_N \not=\emptyset$.}
\end{ex} 

\subsection{Semistable points induced by affine completions of 
$G/N$}\label{subsect:completely_stable}

There is a further way to define $N$-semistable points
in $X$, less 
directly linked to the intrinsic geometry of $G/N$ and $X$. Instead 
of discussing the diagonal $G$-action on $G/N \times Z$ let us consider an 
affine completion $\ol{G/N}^{\text{a}}$ and consider the diagonal $G$-action 
on $\ol{G/N}^{\text{a}}\times Z$. Let $\ol{\iota}\colon X\hookrightarrow 
\ol{G/N}^{\text{a}}\times Z$ be the $N$-equivariant embedding and define 
$X^{\ol{ss}}_N [\omega_X]:=\ol{\iota}^{-1}((\ol{G/N}^{\text{a}}\times 
Z)^{ss}_G[\omega_V + \omega_Z])$. Then, $X^{\ol{ss}}_N[\omega_X]$ is an 
open $N$-invariant subset which contains but in general is strictly bigger than 
$X^{ss}_N[\omega_X]$. Analogously, we define $X^{\ol{s}}_N$ as 
$\ol{\iota}^{-1}((\ol{G/N}^{\text{a}}\times Z)^{s}_G[\omega_V + 
\omega_Z])$.

\begin{lem}
Let $(X,\omega_X)$ be a compact Hamiltonian $N$-manifold with a Hamiltonian 
$G$-extension $(Z,\omega_Z)$. If $N$ is a Grosshans subgroup of $G$, i.e., if 
$\mbb{C}[G]^N$ is finitely generated, then for the canonical affine completion
$\Spec\mbb{C}[G]^N$ of $G/N$ the set $X^{\ol{ss}}_N$ is non-empty.
\end{lem}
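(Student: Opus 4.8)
The plan is to exploit the Grosshans property of $N$ to reduce to a situation where the moment map for the $K$-action on the affine completion $\overline{G/N}^{\text{a}} = \operatorname{Spec}\mathbb{C}[G]^N$ is proper and surjective onto $\mathfrak{k}^*$, and then intersect its zero-fibre with a suitable translate of the compact piece $Z$. More precisely, since $N$ is Grosshans and contained in a maximal unipotent subgroup of $G$, the argument recalled in Section~\ref{sect:moment_maps_for_reps} (using \cite[Example~4.19]{Sj}) shows that \emph{every} dominant weight occurs in $\mathbb{C}[G]^N$, so by Theorem~\ref{Thm:MomentImage} there is an embedding of $\overline{G/N}^{\text{a}}$ into a $G$-representation space $V$ and a $K$-invariant Hermitian inner product on $V$ such that the associated moment map $\mu_V\colon \overline{G/N}^{\text{a}}\to\mathfrak{k}^*$ is proper and satisfies $\mu_V(\overline{G/N}^{\text{a}})\cap\mathfrak{t}^*_+ = \mathfrak{t}^*_+$, hence $\mu_V$ is surjective onto $\mathfrak{k}^*$; this is exactly the content of Lemma~\ref{Lem:GrosshansMomentImage}. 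This is the $\omega_V$ we use to define $X^{\overline{ss}}_N$.

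Next I would run the argument already used in the final Example of Section~\ref{subsubsectio:def_of_ss}, replacing $G/N\cong\mathbb{C}^2\setminus\{0\}$ there by the affine completion here. Consider the moment map $\mu = \mu_V + \mu_Z\colon \overline{G/N}^{\text{a}}\times Z\to\mathfrak{k}^*$, where $\mu_Z$ is the moment map of the (compact) Hamiltonian $G$-extension $Z$. Since $Z$ is compact, $\mu_Z(Z)$ is a non-empty compact subset of $\mathfrak{k}^*$; pick any $\beta_0\in\mu_Z(Z)$ and any $z_0\in\mu_Z^{-1}(\beta_0)$. By surjectivity of $\mu_V$ there exists $g_0\in G$ with $\mu_V(g_0\cdot eN) = -\beta_0$ (note $eN$ denotes here the base point of $G/N\subset\overline{G/N}^{\text{a}}$). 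Then $K$-equivariance of both moment maps gives
\[
\mu\bigl(g_0\cdot(eN,\, g_0^{-1}\cdot z_0)\bigr) = \mu(g_0\cdot eN,\, z_0) = \mu_V(g_0\cdot eN) + \mu_Z(z_0) = -\beta_0 + \beta_0 = 0,
\]
so the point $p := (eN,\, g_0^{-1}\cdot z_0)$ lies in $\overline{G\cdot p}\,\cap\,\mu^{-1}(0)$, i.e.\ $p\in(\overline{G/N}^{\text{a}}\times Z)^{ss}_G$. Since $p$ has first coordinate $eN$, it lies in the image of the $N$-equivariant embedding $\overline{\iota}\colon X\hookrightarrow \overline{G/N}^{\text{a}}\times Z$, namely $p = \overline{\iota}(x_0)$ where $x_0\in X$ corresponds to $g_0^{-1}\cdot z_0\in Z$ under $\iota\colon X\hookrightarrow Z$. (One must check that $g_0^{-1}\cdot z_0$ actually lies in $\iota(X)\subset Z$; if not, replace $z_0$ by the point of $\iota(X)$ in the closed $G$-orbit inside the fibre, which is harmless since $X$ is a Hamiltonian $G$-extension and the zero-fibre computation only used membership of $z_0$ in $Z$ — I would instead run the whole argument with $\mu_X := \iota^*\mu_Z$ on $X$ directly, choosing $\beta_0\in\mu_X(X)$, so that $z_0 = \iota(x_0)$ from the start.) Hence $x_0\in X^{\overline{ss}}_N[\omega_X]$, so this set is non-empty.

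The only genuine point requiring care — and the step I would treat as the main obstacle — is ensuring that the moment map on $\overline{G/N}^{\text{a}}\times Z$ used to define $X^{\overline{ss}}_N$ is the one built from the \emph{surjective} $\mu_V$ of Lemma~\ref{Lem:GrosshansMomentImage}, i.e.\ that the definition of $X^{\overline{ss}}_N$ in this subsection is understood to allow (or is canonically tied to) the Grosshans embedding $\overline{G/N}^{\text{a}} = \operatorname{Spec}\mathbb{C}[G]^N\hookrightarrow V$; with that embedding fixed, surjectivity of $\mu_V$ onto all of $\mathfrak{k}^*$ is precisely what makes the translation trick above go through, whereas the examples in Section~\ref{subsect:discussion_of_choice} show that for other embeddings the zero-fibre can be empty. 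Everything else is a direct repetition of the compact-$Z$/surjective-$\mu_V$ argument, and no new estimates are needed; properness of $\mu_V$ is not even strictly necessary for non-emptiness, only surjectivity is, but properness is available for free and clarifies that $\overline{\iota}^{-1}$ of the semistable locus is well-behaved.
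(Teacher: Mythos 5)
Your overall route is the same as the paper's, whose proof consists of precisely the two observations you make first: the Grosshans hypothesis guarantees, via Theorem~\ref{Thm:MomentImage} and Lemma~\ref{Lem:GrosshansMomentImage}, that $\mu_V\colon\ol{G/N}^{\text{a}}=\Spec\mbb{C}[G]^N\to\lie{k}^*$ is proper and surjective, and surjectivity forces the zero fibre of $\mu=\mu_V+\mu_Z$ to be non-empty. Where you go beyond the paper is in trying to exhibit an explicit point of $X^{\ol{ss}}_N$, and there you have correctly isolated the delicate step: non-emptiness of the zero fibre only gives $(\ol{G/N}^{\text{a}}\times Z)^{ss}_G\neq\emptyset$, and one still has to show that this $G$-invariant open set meets $\ol{\iota}(X)$. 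Neither of your repairs achieves this. The zero-fibre point you construct is $(g_0\acts eN,\,z_0)=g_0\acts(eN,\,g_0^{-1}\acts z_0)$, so the candidate point of $\ol{\iota}(X)$ has second coordinate $g_0^{-1}\acts z_0$; taking $z_0=\iota(x_0)$ from the start changes nothing, because the obstruction then sits in $g_0^{-1}\acts\iota(x_0)$, and $\iota(X)\subset Z$ is only $N$-invariant, not $G$-invariant, so there is no reason for this point to lie in $\iota(X)$ and no way to translate it back (for instance, for $N<{\rm{SL}}(2,\mbb{C})$, $Z=\mbb{P}_1\times\mbb{P}_1$ with the diagonal action and $\iota(X)=\{[1:0]\}\times\mbb{P}_1$, the required $g_0$ will in general not preserve $\iota(X)$). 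A secondary inaccuracy: surjectivity onto $\lie{k}^*$ holds on the affine completion, whereas by Lemma~\ref{Lem:GrosshansMomentImage} the image of the open orbit $G/N$ is only a dense open subset of $\lie{k}^*_{\reg}$, so the equation $\mu_V(g_0\acts eN)=-\beta_0$ may have no solution $g_0\in G$ at all; replacing $g_0\acts eN$ by a boundary point of $\ol{G/N}^{\text{a}}$ only worsens the orbit-membership problem above.

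What would close the gap is to work on the closed $G$-invariant subvariety $\ol{Y}:=\ol{G\acts\ol{\iota}(X)}\subset\ol{G/N}^{\text{a}}\times Z$ and to show $0\in\mu(\ol{Y})$: since orbit closures taken in the closed $G$-invariant set $\ol{Y}$ agree with those taken in the ambient space, the semistable locus of $\ol{Y}$ would then be a non-empty, open, $G$-invariant subset of $\ol{Y}$, which must meet the dense subset $G\acts\ol{\iota}(X)$ and hence, by $G$-invariance, $\ol{\iota}(X)$ itself. Establishing $0\in\mu(\ol{Y})$ requires analysing how $G$-orbits through $\ol{\iota}(X)$ degenerate towards the boundary of $\ol{G/N}^{\text{a}}$ and is not supplied by the bare non-emptiness of $\mu^{-1}(0)$ on the product. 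To be fair, the paper's own two-sentence proof is exactly as terse as your first paragraph and does not address this step either; but your explicit attempt to repair it does not work as written.
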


\begin{proof}
We already noticed in Section~\ref{sect:moment_maps_for_reps} that under the 
Grosshans assumption the corresponding moment map $\mu_V\colon 
\Spec\mbb{C}[G]^N\to\lie{k}^*$ is surjective. Every moment map 
$\mu=\mu_V+\mu_Z\colon \Spec\mbb{C}[G]^N\times Z\to \lie{k}^*$ thus has non-empty zero fibre.
\qed
\end{proof}

\subsection{Algebraic actions on projective manifolds}\label{subsect:algebraic_actions}

In this section, we study the following situation: let $X$ be a projective 
manifold and $N$ a unipotent group acting \emph{linearly} on $X$ in the sense 
that there exists a finite-dimensional $N$-representation $W$ such that the 
corresponding homomorphism $N \to \mathrm{GL}(W)$ embeds $N$ into a semisimple 
subgroup $G$ of $\mathrm{SL}(W)$, and an $N$-equivariant embedding $\iota\colon 
X\hookrightarrow \mathbb{P}(W)$. We will compare the moment map approach 
presented in earlier sections with the Geometric Invariant Theory approach of 
Doran--Kirwan \cite{DorKir}.

Consider the (very ample) line bundle $L_X:= 
\iota^*(\mathscr{O}_{\mathbb{P}(W)}(1) )$ on $X$, which is 
$N$-linearised by construction. Let $\langle \cdot, \cdot \rangle$ be a 
Hermitian inner product on $W$ and set $K := \mathrm{SU}(W, \langle \cdot, \cdot 
\rangle) \cap G$, so that $G = K^\mathbb{C}$. Endow $\mathbb{P}(W)$ and hence $X$ with 
the corresponding Fubini-Study K\"ahler form $\omega_{FS}$ and its restriction 
$\omega_X := \iota^*(\omega_{FS})$, respectively, so that $[\omega_X] = c_1(L_X) 
\in H^2 ( X, \,\mathbb{R})$. Note that $\mathbb{P}(W)$ is a 
Hamiltonian $G$-extension of $X$. Next, as suggested by the construction of 
semistable points with respect to $\omega_X$, we look at
 \[ \ol{\iota}\colon X \hookrightarrow  Y=G\times_NX\hookrightarrow G\times_N 
\mathbb{P}(W) \cong G/N\times \mathbb{P}(W) \hookrightarrow 
\ol{G/N}^{\mathrm{a}} \times \mathbb{P}(W), \]
cf.~Section~\ref{subsect:completely_stable}, and additionally at the 
$G$-linearised ample line bundle $ L:= \mathscr{O}_{\ol{G/N}^{\mathrm{a}}} 
\boxtimes \mathscr{O}_{\mathbb{P}(W)}(1)$. In this situation, we define
\[X^{\ol{s}}_N(L_X) := \ol{\iota}^{-1}((\ol{G/N}^{\mathrm{a}} \times 
\mathbb{P}(W))^{s}_G(L) )\]
to be the pre-image of the GIT-stable points for the $G$-action on 
$\ol{G/N}^{\mathrm{a}} \times \mathbb{P}(W)$ and the given 
linearisation, which is unique as 
$G$ is semisimple. The main comparison result regarding 
moment-map-semistability and GIT-semistability can now be formulated as 
follows: 

\begin{prop}\label{prop:comparison_mu_GIT}
 In the above situation, assume additionally that $\ol{G/N}^{\mathrm{a}}$ is 
normal. Then, we have
\begin{equation}\label{eq:comparison_mu_GIT}X^{ss}_N[\omega_X] = 
X^{\ol{s}}_N(L_X).
\end{equation}
\end{prop}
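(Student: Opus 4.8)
The plan is to identify both sides of \eqref{eq:comparison_mu_GIT} with the pre-image under $\ol{\iota}$ of a single, well-understood locus in $\ol{G/N}^{\mathrm{a}} \times \mathbb{P}(W)$, using the fact that for \emph{proper} moment maps the moment-map notion of semistability and the GIT notion coincide. The key input is that, by the discussion in Section~\ref{sect:moment_maps_for_reps} and the normality assumption on $\ol{G/N}^{\mathrm{a}}$, we may embed $\ol{G/N}^{\mathrm{a}}$ as a closed $G$-invariant affine subvariety of a $G$-representation space $V$ so that the flat K\"ahler form $\omega_V$ restricts to one whose moment map $\mu_V\colon \ol{G/N}^{\mathrm{a}}\to\lie{k}^*$ is proper (Theorem~\ref{Thm:MomentImage} gives properness on each orbit closure, and a judicious choice of embedding globalises this; here one uses that $\ol{G/N}^{\mathrm{a}}=\Spec\mbb{C}[G]^N$ with $N$ Grosshans). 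Adding $\mu_{FS}$ on the compact factor $\mathbb{P}(W)$, the total moment map $\mu_V + \mu_{FS}$ on $\ol{G/N}^{\mathrm{a}} \times \mathbb{P}(W)$ is still proper, and $\omega_V \oplus \omega_{FS}$ is the curvature form of a Hermitian metric in $L = \mathscr{O}_{\ol{G/N}^{\mathrm{a}}} \boxtimes \mathscr{O}_{\mathbb{P}(W)}(1)$.

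First I would invoke \cite[Theorem~2.18]{Sj2} (applicable precisely because the moment map is proper/admissible) to conclude that on $\ol{G/N}^{\mathrm{a}} \times \mathbb{P}(W)$ the set of moment-map-semistable points $(\ol{G/N}^{\mathrm{a}} \times \mathbb{P}(W))^{ss}_G[\omega_V + \omega_{FS}]$ coincides with the set of GIT-semistable points $(\ol{G/N}^{\mathrm{a}} \times \mathbb{P}(W))^{ss}_G(L)$. Since $G$ is semisimple, there are no nonconstant $G$-invariant characters, so on the affine-over-projective variety $\ol{G/N}^{\mathrm{a}} \times \mathbb{P}(W)$ GIT-semistable points with closed orbit in the semistable locus are exactly GIT-stable points in the sense used here; more precisely, I would argue that restricted to the image of $\ol{\iota}$ (which lands in the orbit $G/N \times \mathbb{P}(W)$, where $G$ acts with the $\mathbb{P}(W)$-isotropies only, and $N$ acts freely on $G/N$), the condition $\ol{G\acts y}\cap\mu^{-1}(0)\ne\emptyset$ forces the relevant orbit through a zero of the moment map to already be closed, so that for points of $X\subset Y$ semistability and stability agree. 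This is exactly the content that lets us match $X^{ss}_N[\omega_X]$ (defined via $Y^{ss}_G[\omega_Y]$) on the left with $X^{\ol{s}}_N(L_X)$ on the right.

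Next I would bridge the quasi-affine completion $G/N$ used in Definition~\ref{defn:semistable} and the affine completion $\ol{G/N}^{\mathrm{a}}$ used for $X^{\ol{s}}_N(L_X)$. The point is that $\omega_V$ restricted from $V$ gives the \emph{same} K\"ahler form on the open orbit $G/N\subset \ol{G/N}^{\mathrm{a}}$ that appears in the construction of $\omega_Y$; moreover a point $x\in X$ lies in $Y^{ss}_G[\omega_Y]$ iff $\ol{G\acts\wh{\iota}([e,x])}\cap\mu_Y^{-1}(0)\ne\emptyset$ inside $Y=G/N\times X$, while it lies in $X^{\ol{ss}}_N[\omega_X]$ iff the analogous closure inside $\ol{G/N}^{\mathrm{a}}\times Z$ meets the zero fibre. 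Because the boundary $\ol{G/N}^{\mathrm{a}}\setminus (G/N)$ has codimension $\ge 2$ (Grosshans), and because $\mu_V$ is proper, any limit point of a $G$-orbit that realises semistability in the affine model with a zero of the moment map must in fact lie over $G/N$ — a properness/limit argument shows that orbit closures meeting $\mu^{-1}(0)$ cannot escape to the boundary. This gives $X^{\ol{ss}}_N=X^{ss}_N$ in the present normal situation, and combined with the stable-equals-semistable observation above yields \eqref{eq:comparison_mu_GIT}.

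The main obstacle I anticipate is the careful bookkeeping in the second and third paragraphs: verifying that one can choose the $G$-representation $V$ so that $\mu_V$ is genuinely \emph{proper} on all of $\ol{G/N}^{\mathrm{a}}$ (not merely on orbit closures), and then pushing the properness through the product with $\mathbb{P}(W)$ and through the embedding $\wh{\iota}$, to legitimately apply Sjamaar's comparison theorem. A secondary subtlety is the identification of GIT-semistable with GIT-stable for points coming from $X$: this requires checking that the stabiliser in $G$ of such a point is reductive (it equals the $\mathbb{P}(W)$-stabiliser, since $N$ acts freely on $G/N$, and reductivity then follows from the analytic Hilbert quotient machinery recalled in Section~\ref{subsect:aHq_properties}) and that the orbit is closed in the semistable locus — both of which hinge on the fact that $G/N$ contributes no extra closed-orbit degeneracies. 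Once properness and these orbit-closedness facts are in hand, the equality \eqref{eq:comparison_mu_GIT} is essentially formal, proceeding exactly as in the proof of Proposition~\ref{Prop:Semistable} and Theorem~\ref{Thm:Independence}.
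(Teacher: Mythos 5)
Your first two paragraphs follow essentially the same route as the paper: restrict the flat metric to $\ol{G/N}^{\mathrm{a}}=\ol{G\acts v}$, note via Theorem~\ref{Thm:MomentImage} that $\mu_V$, and hence $\mu_V+\mu_{FS}$, is proper, run a Sjamaar-type comparison to identify $(\ol{G/N}^{\mathrm{a}}\times\mathbb{P}(W))^{ss}_G(\mu)$ with $(\ol{G/N}^{\mathrm{a}}\times\mathbb{P}(W))^{ss}_G(L)$, pass to stable loci, and use closedness of $G$-orbits in $Y^{ss}_G[\omega_Y]$ (Lemma~\ref{Lem:AllOrbitsClosed}) to match things up over $X$. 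Two remarks here: you cannot literally quote \cite[Theorem~2.18]{Sj2}, since $\ol{G/N}^{\mathrm{a}}\times\mathbb{P}(W)$ is singular; the argument has to be re-run, and this is exactly where the normality hypothesis enters --- bounded $G$-invariant sections of $L$ over the Zariski-open semistable locus are extended across its complement by Riemann's Extension Theorem, which requires normality. You instead attribute normality to the choice of the embedding into $V$, which is not where it is needed.

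The genuine gap is in your third paragraph. The claim $X^{\ol{ss}}_N[\omega_X]=X^{ss}_N[\omega_X]$ is false in general, even for normal $\ol{G/N}^{\mathrm{a}}$ (Section~\ref{subsect:completely_stable} states explicitly that the inclusion is strict in general), and the argument you offer --- that properness of $\mu_V$ and $\codim\ge 2$ of the boundary prevent orbit closures from meeting $\mu^{-1}(0)$ in the boundary --- does not work: orbit closures in an affine $G$-variety routinely contain lower-dimensional boundary orbits regardless of codimension, and the gradient flow of $-\norm{\mu}^2$ starting at a point of $\hat{\iota}(Y)$ may converge to a zero of the moment map lying over $\ol{G/N}^{\mathrm{a}}\setminus(G/N)$; this escape to the boundary is precisely why $X^{ss}_N(L_X)$ is in general strictly larger than $X^{ss}_N[\omega_X]$. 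Fortunately the false step is also unnecessary: the statement involves the \emph{stable} locus $X^{\ol{s}}_N(L_X)$, and the correct bridge is $X^{ss}_N[\omega_X]=X^{\ol{s}}_N[\omega_X]$ (Corollary~\ref{cor:completely_stable_gleich_semistable}), i.e.\ the $G$-orbit of a semistable point of $Y$ \emph{itself} --- not merely its closure in the affine model --- meets the zero fibre; this needs only Lemma~\ref{Lem:AllOrbitsClosed} together with the fact that $\mu_Y$ is the restriction of $\mu_{\ol{G/N}^{\mathrm{a}}\times\mathbb{P}(W)}$. Replacing your third paragraph by this observation, and matching the two notions of stability on $\ol{G/N}^{\mathrm{a}}\times\mathbb{P}(W)$ via the characterisation of stable points as those whose quotient fibre is a single closed orbit, recovers the paper's proof.
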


\begin{proof}
The inner product $\langle \cdot, \cdot \rangle $ induces a $K$-invariant 
Hermitian metric on $\mathscr{O}_{\mathbb{P}(W)}(1)$ such that $\frac{i}{2\pi}\, 
\times$ the curvature is $\omega_{FS}$. Using a $K$-invariant Hermitian metric 
on the trivial line bundle over $V \supset \ol{G/N}^{\mathrm{a}}$ with 
$\frac{i}{2\pi}\, \times$ curvature equal to $\omega_V$, we get a $K$-invariant 
Hermitian metric $h$ on $L \to \ol{G/N}^{\mathrm{a}} \times \mathbb{P}(W)$ with 
$\frac{i}{2\pi}\, \times$ curvature equal to $\omega_V + 
\omega_{FS}$. We are hence in the general situation of 
\cite[Section~2.2]{Sj2}\footnote{As $G$ is semisimple, the moment map computed 
there has to coincide with $\mu_{\ol{G/N}^{\mathrm{a}} \times \mathbb{P}(W)}$.}, 
with the exception that $\ol{G/N}^{\mathrm{a}} \times \mathbb{P}(W)$ is normal 
and not smooth, which does not affect Sjamaar's arguments\footnote{See also 
\cite{HausenHeinzner} for the result attributed to Roberts. Regarding \cite[Lemma 
2.16]{Sj2}, see also \cite{Extensionofsymplectic}.}. In particular, the compact 
complex space $(\ol{G/N}^{\mathrm{a}} \times 
\mathbb{P}(W))^{ss}_G(\mu_{\ol{G/N}^{\mathrm{a}} \times \mathbb{P}(W)}) \hq 
G$ is projective algebraic by Grauert's version of the Kodaira Embedding 
Theorem, see \cite[Theorem~2.17]{Sj2}. Moreover, we claim that
\begin{equation}\label{eq:semistable_the_same}(\ol{G/N}^{\mathrm{a}} \times 
\mathbb{P}(W))^{ss}_G(\mu_{\ol{G/N}^{\mathrm{a}} \times \mathbb{P}(W)}) = 
(\ol{G/N}^{\mathrm{a}} \times \mathbb{P}(W))^{ss}_G(L).
\end{equation}

In order to prove this, as the moment map $\mu_{\ol{G/N}^{\mathrm{a}} \times 
\mathbb{P}(W)}$ is proper we can follow the general line of argumentation 
presented in  \cite[proof of Theorem~2.18]{Sj2}: Since the possibly singular 
variety $\ol{G/N}^{\mathrm{a}} \times \mathbb{P}(W)$ is contained in $V\times 
\mathbb{P}(W)$, and since all differential geometric and symplectic objects are 
obtained by restriction, the computations regarding the relation between the 
norms of sections and (the norm square of) the moment map given in the first 
paragraph of \emph{loc.~cit.} continue to hold, so that for any $\mu$-semistable 
$p \in \ol{G/N}^{\mathrm{a}} \times \mathbb{P}(W)$ and any $G$-invariant section 
$s$ of $L$ over $(\ol{G/N}^{\mathrm{a}} \times 
\mathbb{P}(W))^{ss}_G(\mu_{\ol{G/N}^{\mathrm{a}} \times \mathbb{P}(W)})$, 
the restriction of the function $h(s , s)$ to the closure of $G \acts p$ inside 
$(\ol{G/N}^{\mathrm{a}} \times \mathbb{P}(W))^{ss}_G 
(\mu_{\ol{G/N}^{\mathrm{a}} \times \mathbb{P}(W)})$ takes on its maximum at 
the limit $F_\infty(p)$ under the gradient flow of 
$-\|\mu_{\ol{G/N}^{\mathrm{a}} \times \mathbb{P}(W)}\|^2$, from which we 
conclude that $s$ is bounded on $(\ol{G/N}^{\mathrm{a}} \times 
\mathbb{P}(W))^{ss}_G(\mu_{\ol{G/N}^{\mathrm{a}} \times \mathbb{P}(W)})$. 
Furthermore, an application of \cite[Proposition 7.6]{PaHq} shows that the set 
of $\mu$-semistable points is Zariski-open. Since in addition 
$\ol{G/N}^{\mathrm{a}} \times \mathbb{P}(W)$ is normal, it therefore follows 
from Riemann's Extension Theorem\footnote{In Sjamaar's setup the application of 
Riemann's Extension Theorem is not justified, since at this point the complement 
of the set of $\mu$-semistable points is not known to be small enough; e.g.,~it 
could contain interior points (in the Euclidean topology).} that $s$ extends to 
a $G$-invariant section over the whole of $\ol{G/N}^{\mathrm{a}} \times 
\mathbb{P}(W)$. The arguments for the two implications ``algebraically 
semistable implies analytically semi\-stable'' and ``analytically semi\-stable 
implies algebraically semistable'' can now be used without changes, proving 
\eqref{eq:semistable_the_same}. 
 
The analogous equality for stable points follows from the fact that on both 
sides, these are the ones for which the corresponding fibre of the quotient map 
consists of a single (closed) orbit. Intersecting with $\ol{\iota}(X)$ yields  
$X^{\ol{s}}_N[\omega_X] = X^{\ol{s}}_N(L_X)$, from which we conclude using 
Corollary~\ref{cor:completely_stable_gleich_semistable} proven in 
Section~\ref{subsect:geometric_quotients} below.
\qed
\end{proof}

\begin{rem}[Comparison of semistable points]{\rm
 In the given situation, Doran and Kirwan in \cite[Definition 5.1.6]{DorKir}  
define the set of \emph{GIT-semistable points} to be 
 \[X^{ss}_N(L_X) := X \cap Y^{ss}_G(\hat\iota^* \mathscr{O}_{G/N} \boxtimes 
\mathscr{O}_{\mathbb{P}(W)}(1)),\]
where $\hat\iota$ is given by $\eqref{eq:untwisting_the_action}$. In  general, 
this set will not coincide with $X^{ss}_N[\omega_X]$, as the following argument 
shows. Assume we had $X^{ss}_N[\omega_X] = X^{ss}_N(L_X)$. Since the latter set 
only depends on the $N$-action on $X$ and its lift to the $N$-linearised line 
bundle $L_X$, see \cite[Proposition 5.1.9]{DorKir}, the same would be true for 
$X^{ss}_N[\omega_X]$. In particular, $X^{ss}_N[\omega_X]$ would be independent 
of the chosen embedding $N \hookrightarrow G$ and of the chosen embedding $G/N 
\hookrightarrow V$ with (normal) affinisation $\ol{G/N}^{\mathrm{a}}$. This 
however would stand in contradiction to 
Example~\ref{ex:not_independent_of_the_group}.
 
In this direction, Equality~\eqref{eq:comparison_mu_GIT} gives the inclusion 
$X^{ss}_N[\omega_X] \subset X^{ss}_N(L_X) $, which in general is strict, as the 
gradient flow of the norm square of the moment map of a GIT-semistable point in 
$\hat{\iota}(Y)$ might converge to a point (in the zero fibre of the moment 
map) in the \emph{boundary} of $\hat{\iota}(Y)$ in $\ol{G/N}^{\mathrm{a}} 
\times \mathbb{P}(W)$.}
\end{rem}

\section{Properties of quotients by unipotent groups}\label{sect:properties}
 
We establish the existence of a compactifiable geometric quotient of the set of 
semistable points by the $N$-action that extends to a meromorphic map from $X$ 
to the compactification and carries a natural K\"ahler form obtained by 
symplectic reduction. We will use the notation established in 
Section~\ref{subsubsectio:def_of_ss}. 

\subsection{Existence of geometric quotients}\label{subsect:geometric_quotients}

As in the reductive case, sets of semistable points admit quotients, which in 
the unipotent case are automatically geometric, since unipotent groups cannot 
have properly semistable orbits by the following

\begin{lem}\label{Lem:AllOrbitsClosed}
Let $(X,\omega_X)$ be a Hamiltonian $N$-manifold. Then every $N$-orbit in 
$X^{ss}_N[\omega_X]$ is closed in $X^{ss}_N[\omega_X]$, i.e., we have 
$X^{ss}_N[\omega_X]=X^s_N[\omega_X]$.
\end{lem}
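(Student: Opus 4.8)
The plan is to push the statement over to the reductive quotient theory for the diagonal $G$-action on $Y=G\times_NX\cong G/N\times Z$, exploiting a tension between reductivity and unipotence. First I would record the trivial inclusion: since $\mu_Y^{-1}(0)\subseteq Y^{ss}_G[\omega_Y]$, one has $Y^s_G[\omega_Y]=G\acts\mu_Y^{-1}(0)\subseteq Y^{ss}_G[\omega_Y]$ and hence $X^s_N[\omega_X]\subseteq X^{ss}_N[\omega_X]$, so that only the reverse inclusion (and then closedness of orbits) needs proof. I would also note three elementary facts about the embedding of $X=\{[e,x']\mid x'\in X\}$ into $Y$: it realises $X$ as the fibre of $Y\to G/N$ over $eN$, hence as a closed subset of $Y$; one has $G\acts[e,x]\cap X=N\acts x$ for every $x\in X$; and the isotropy group $G_{[e,x]}$ equals $N_x:=\{n\in N\mid n\acts x=x\}$, a closed subgroup of the torsion-free unipotent group $N$, hence itself unipotent and torsion-free.

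Next, fix $x\in X^{ss}_N[\omega_X]$, i.e.\ $[e,x]\in Y^{ss}_G[\omega_Y]$, and choose $y_0\in\overline{G\acts[e,x]}\cap\mu_Y^{-1}(0)$; since $\mu_Y^{-1}(0)\subseteq Y^{ss}_G[\omega_Y]$ we may regard the closure as taken inside the open set $Y^{ss}_G[\omega_Y]$. The heart of the argument is to determine $G_{y_0}$: by Theorem~\ref{propertiesmomentumquotients} together with Matsushima's theorem \cite{Mat}, the orbit $G\acts y_0$ is closed in $Y^{ss}_G[\omega_Y]$ and $G_{y_0}$ is a complex reductive group; on the other hand the $G$-equivariant projection $q\colon Y\cong G/N\times Z\to G/N$ gives $G_{y_0}\subseteq G_{q(y_0)}$, which is conjugate to $N$, so $G_{y_0}$ is also unipotent. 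As a group that is simultaneously reductive and unipotent is trivial, $G_{y_0}=\{e\}$, whence $\dim_{\mathbb C}G\acts y_0=\dim_{\mathbb C}G$.

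I would then finish by a dimension count. Since $G\acts y_0\subseteq\overline{G\acts[e,x]}$ and the latter is irreducible of dimension $\dim_{\mathbb C}G\acts[e,x]=\dim_{\mathbb C}G-\dim_{\mathbb C}N_x\le\dim_{\mathbb C}G$, equality of dimensions is forced: $\dim_{\mathbb C}G\acts[e,x]=\dim_{\mathbb C}G$, so $N_x$ is finite, hence trivial. Moreover $G\acts[e,x]$ must then already be closed in $Y^{ss}_G[\omega_Y]$, since otherwise its boundary $\overline{G\acts[e,x]}\setminus G\acts[e,x]$ would be a proper closed analytic subset of the irreducible analytic set $\overline{G\acts[e,x]}$, hence of strictly smaller dimension, yet it would contain the $\dim_{\mathbb C}G$-dimensional orbit $G\acts y_0$. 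Thus $[e,x]\in G\acts\mu_Y^{-1}(0)=Y^s_G[\omega_Y]$, which gives $x\in X^s_N[\omega_X]$ and proves $X^{ss}_N[\omega_X]=X^s_N[\omega_X]$. Finally, for such $x$ the orbit $N\acts x=G\acts[e,x]\cap X$ is the intersection of $X$ with the set $G\acts[e,x]$, which is closed in $Y^{ss}_G[\omega_Y]$; since $X$ is closed in $Y$, this intersection is closed in $X\cap Y^{ss}_G[\omega_Y]=X^{ss}_N[\omega_X]$, as desired.

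I expect the only genuinely delicate point to be the implication ``$G\acts[e,x]$ not closed $\Rightarrow$ boundary of strictly smaller dimension containing $G\acts y_0$'': this rests on the fact, standard in the analytic Hilbert quotient theory and ultimately on the Stein structure of the fibres of $\pi_Y$, that $G$-orbits are locally closed in $Y^{ss}_G[\omega_Y]$ with analytic boundary a union of orbits of lower dimension. The genuinely new ingredient — the vanishing of $G_{y_0}$, read off from the equivariant map $q$ to $G/N$ — is elementary once the reduction to $Y=G\times_NX$ is in place, and it is exactly what encodes ``unipotent groups have no properly semistable orbits''.
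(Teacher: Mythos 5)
Your argument is correct, but it takes a genuinely different route from the paper. The paper's proof is essentially one line: by \cite[Proposition~3.3.7]{HH2} the fibre $\pi_G^{-1}(\pi_G([e,x]))$ of the analytic Hilbert quotient of $Y^{ss}_G[\omega_Y]$ is ($G$-equivariantly biholomorphic to) an affine $G$-variety, and the classical Kostant--Rosenlicht theorem asserts that orbits of a unipotent group acting algebraically on an affine variety are closed; hence $N\acts[e,x]$, and with it $G\acts[e,x]=G\times_N(N\acts x)$, is closed in the fibre and therefore in $Y^{ss}_G[\omega_Y]$. You avoid Kostant--Rosenlicht altogether: you use Matsushima's theorem to see that the stabiliser of the closed orbit through $y_0\in\mu_Y^{-1}(0)$ is reductive, observe via the equivariant projection $Y\cong G/N\times Z\to G/N$ that all stabilisers on $Y$ are unipotent, conclude $G_{y_0}=\{e\}$, and then run a dimension count inside $\overline{G\acts[e,x]}$. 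Both proofs ultimately rest on the same input, namely the affine-variety structure of the quotient fibres: in your case this is what guarantees that $\overline{G\acts[e,x]}$ is an irreducible variety of dimension $\dim G\acts[e,x]$ whose boundary is a union of lower-dimensional orbits --- exactly the point you rightly flag as delicate, and which is legitimate once one works inside the fibre, where Euclidean and Zariski closures of the constructible orbit agree. What your version buys is that it simultaneously proves freeness, $N_x=\{e\}$ for every semistable $x$, which is precisely the extra content of Theorem~\ref{thm:existence_of_geometric_quotient} (there deduced afterwards from closedness plus Matsushima by the same reductive-versus-unipotent tension); it also makes explicit the passage from closedness of orbits to $X^{ss}_N[\omega_X]=X^s_N[\omega_X]$, which the paper leaves implicit. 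One small attribution point: the closedness of $G\acts y_0$ for $y_0\in\mu_Y^{-1}(0)$ and the reductivity of its stabiliser are recorded in Section~\ref{subsect:aHq_properties} rather than in Theorem~\ref{propertiesmomentumquotients} itself, but both facts are indeed available in the paper.
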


\begin{proof}
Consider the analytic Hilbert quotient $\pi_G\colon 
Y^{ss}_G[\omega_Y]\to Y^{ss}_G[\omega_Y]\hq G$. The fibre 
$\pi_G^{-1}(\pi_G^{}(x))$ is an affine $G$-variety, see 
\cite[Proposition 3.3.7]{HH2}. It hence follows from a classical result that 
every $N$-orbit is closed in $\pi_G^{-1}(\pi_G^{}(x))$ and hence in 
$Y^{ss}_G[\omega_X]$. The claim follows.
\qed
\end{proof}

\begin{cor}\label{cor:completely_stable_gleich_semistable}
In the situation of Section~\ref{subsect:completely_stable}, we have 
$X^{ss}_N[\omega_X] = X^{\ol{s}}_N[\omega_X]$.
\end{cor}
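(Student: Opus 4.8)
The plan is to bypass all subtleties concerning orbit closures by working directly with the zero fibre of the moment map and invoking Lemma~\ref{Lem:AllOrbitsClosed}. By that lemma we have $X^{ss}_N[\omega_X] = X^s_N[\omega_X] = X\cap Y^s_G[\omega_Y]$, so it suffices to prove $X^{\ol{s}}_N[\omega_X] = X^s_N[\omega_X]$. Concretely, writing $\mu := \mu_V+\mu_Z$ for the moment map on $\ol{G/N}^{\mathrm{a}}\times Z$ (which is unique since $G$ is semisimple, see Remark~\ref{rem:semisimple_always_Hamiltonian}), and $\wh{\iota}\colon Y=G\times_NX\hookrightarrow G/N\times Z\hookrightarrow\ol{G/N}^{\mathrm{a}}\times Z$ for the proper embedding from \eqref{eq:untwisting_the_action}, we must show that for $x\in X$ the point $\ol{\iota}(x)$ lies in $(\ol{G/N}^{\mathrm{a}}\times Z)^s_G[\omega_V+\omega_Z] = G\acts\mu^{-1}(0)$ if and only if $[e,x]\in Y^s_G[\omega_Y] = G\acts\mu_Y^{-1}(0)$.

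The key observation is that $\mu_Y$ is simply the restriction of $\mu$. Indeed, by Equations~\eqref{eq:definition_of_omega_1} and~\eqref{eq:definition_of_omega_2} the form $\omega_Y$ is the pullback under $\wh{\iota}$ of $\omega_V\oplus\omega_Z$, the subspace $G/N\times Z$ is a $G$-invariant open subset of $\ol{G/N}^{\mathrm{a}}\times Z$, and $\wh{\iota}(Y)$ is a $G$-invariant closed submanifold of it. Since moment maps restrict to $G$-invariant submanifolds and are unique in the semisimple situation, we obtain $\mu_Y=\mu\circ\wh{\iota}$. Identifying $Y$ with its image $\wh{\iota}(Y)$, this gives $\mu_Y^{-1}(0)=\wh{\iota}(Y)\cap\mu^{-1}(0)$.

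The equivalence now follows by a direct computation. For $x\in X$ and $g\in G$ one has $g\acts\ol{\iota}(x)=(gN,\,g\acts\iota(x))=\wh{\iota}([g,x])$, which lies in $\wh{\iota}(Y)\subset G/N\times Z$. Hence $g\acts\ol{\iota}(x)\in\mu^{-1}(0)$ if and only if $\wh{\iota}([g,x])\in\wh{\iota}(Y)\cap\mu^{-1}(0)=\wh{\iota}(\mu_Y^{-1}(0))$, i.e., if and only if $[g,x]\in\mu_Y^{-1}(0)$. Letting $g$ vary, $\ol{\iota}(x)\in G\acts\mu^{-1}(0)$ if and only if $G\acts[e,x]\cap\mu_Y^{-1}(0)\neq\emptyset$, that is, $[e,x]\in Y^s_G[\omega_Y]$. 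Therefore $X^{\ol{s}}_N[\omega_X]=X^s_N[\omega_X]=X^{ss}_N[\omega_X]$.

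I do not expect a serious obstacle; the only point that requires a little care is the identification $\mu_Y=\mu|_Y$, which rests on the uniqueness of moment maps for semisimple $G$ and the compatibility of the chosen K\"ahler forms built into the construction of Section~\ref{subsubsectio:def_of_ss}. It is worth emphasising that the argument genuinely uses the \emph{stable} locus rather than the merely semistable one: for semistable points the $G$-orbit closure inside $\ol{G/N}^{\mathrm{a}}\times Z$ could meet $\mu^{-1}(0)$ in a boundary point lying outside $\wh{\iota}(Y)$, which is precisely the phenomenon that makes $X^{\ol{ss}}_N[\omega_X]$ strictly larger than $X^{ss}_N[\omega_X]$ in general.
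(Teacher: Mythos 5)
Your argument is correct and follows essentially the same route as the paper: both proofs invoke Lemma~\ref{Lem:AllOrbitsClosed} to replace orbit closures by orbits, and then use that $\mu_Y$ is the restriction of $\mu_V+\mu_Z$ together with the $G$-equivariance of $\wh{\iota}$ (so that the $G$-orbit of $\ol{\iota}(x)$ never leaves $\wh{\iota}(Y)$) to identify $Y^{ss}_G[\omega_Y]$ with the preimage of the ambient stable set. Your closing remark about why the \emph{stable} rather than semistable locus of $\ol{G/N}^{\mathrm{a}}\times Z$ is the right object also correctly reflects the discussion in Section~\ref{subsect:completely_stable}.
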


\begin{proof}
It follows from Lemma~\ref{Lem:AllOrbitsClosed} that every orbit in 
$Y^{ss}_G[\omega_Y]$ is closed in $Y^{ss}_G[\omega_Y]$. If $\Phi\colon Y 
\hookrightarrow \ol{G/N}^{\mathrm{a}} \times Z$ is the natural inclusion, we 
hence have
\[
 Y^{ss}[\omega_Y] = \{y \in Y \mid G\acts y \cap \mu_Y^{-1}(0) \neq \emptyset\} 
= \{y \in Y \mid G\acts \Phi(y) \cap \mu_{\ol{G/N}^{\mathrm{a}} \times 
Z}^{-1}(0) \neq \emptyset\}.
\]
As $\Phi$ restricted to $X \subset Y$ coincides with $\ol{\iota}$, the claim 
follows.
\qed
\end{proof}

\begin{thm}\label{thm:existence_of_geometric_quotient}
Let $(X, \omega_X)$ be a compact Hamiltonian $N$-manifold. Then, the set 
$X^{ss}_N[\omega_X]$ of semi\-stable points admits a geometric quotient 
$\pi\colon X^{ss}_N[\omega_X] \to X^{ss}_N[\omega_X] / N$ by the $N$-action. In 
fact, $\pi$ is a principal $N$-fibre bundle and $X^{ss}_N[\omega_X] / N =:Q$ is 
smooth.
\end{thm}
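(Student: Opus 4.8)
The strategy is to realise everything inside the Hamiltonian $G$-manifold $Y=G\times_N X$ and to exploit that an $N$-action is far more rigid than a general reductive one. The first, essentially formal, observation is that
\[
 Y^{ss}_G[\omega_Y]=G\times_N X^{ss}_N[\omega_X]\quad\text{inside }Y=G\times_N X.
\]
Indeed, $Y^{ss}_G[\omega_Y]$ is $G$-invariant and open, so if $[g,x]$ lies in it, then so does $[e,x]=g^{-1}\acts[g,x]$, which by Definition~\ref{defn:semistable} means $x\in X\cap Y^{ss}_G[\omega_Y]=X^{ss}_N[\omega_X]$; conversely $X^{ss}_N[\omega_X]\subset Y^{ss}_G[\omega_Y]$ and the latter is $G$-invariant. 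In particular $X^{ss}_N[\omega_X]$, being the fibre over $eN$ of the bundle $Y\to G/N$, is closed in $Y^{ss}_G[\omega_Y]$, and we expect $Q:=X^{ss}_N[\omega_X]/N$ to coincide with the analytic Hilbert quotient $Y^{ss}_G[\omega_Y]\hq G$ provided by Theorem~\ref{propertiesmomentumquotients}.

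The heart of the argument is to establish closedness and freeness. By Lemma~\ref{Lem:AllOrbitsClosed} every $N$-orbit in $X^{ss}_N[\omega_X]$ is closed; since the $G$-orbit of $[e,x]$ equals $G\times_N(N\acts x)$ and the functor $G\times_N(-)$ carries closed $N$-invariant subsets of $X^{ss}_N[\omega_X]$ to closed subsets of $G\times_N X^{ss}_N[\omega_X]$, every $G$-orbit in $Y^{ss}_G[\omega_Y]$ is closed as well. Hence each fibre of $\pi_G\colon Y^{ss}_G[\omega_Y]\to Y^{ss}_G[\omega_Y]\hq G$ consists of a single $G$-orbit, so $\pi_G$ is a geometric quotient. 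Moreover, each such fibre is an affine $G$-variety by \cite[Proposition~3.3.7]{HH2}, so for $x\in X^{ss}_N[\omega_X]$ the homogeneous space $G/G_{[e,x]}\cong G\acts[e,x]$ is affine; since $G_{[e,x]}=N_x$ is a subgroup of the unipotent group $N$, Matsushima's criterion \cite{Mat} forces $N_x=\{e\}$. Thus the $N$-action on $X^{ss}_N[\omega_X]$ and the $G$-action on $Y^{ss}_G[\omega_Y]$ are both free.

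It remains to upgrade ``free action with geometric quotient'' to ``principal bundle over a smooth base''. Here I would invoke the holomorphic slice theorem underlying the theory of analytic Hilbert quotients, see \cite{HeinznerGIT} and \cite{HH2}: at a point $[e,x]$ with trivial stabiliser there is a locally closed complex submanifold $S\subset Y^{ss}_G[\omega_Y]$ through $[e,x]$ such that $G\times S\to Y^{ss}_G[\omega_Y]$ is biholomorphic onto a $\pi_G$-saturated open neighbourhood and intertwines $\pi_G$ with the projection $G\times S\to S$. Consequently $\pi_G$ is a holomorphic principal $G$-bundle and $Y^{ss}_G[\omega_Y]\hq G$ is a complex manifold. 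Trivialising $\pi_G$ over an open set $V$ of the base as $V\times G$ via a local section $s$, the $G$-equivariant map $Y^{ss}_G[\omega_Y]\to G/N$ becomes $(v,g)\mapsto g\acts\gamma(v)$ with $\gamma=\beta\circ s\colon V\to G/N$; after shrinking $V$ we may lift $\gamma$ to $\tilde\gamma\colon V\to G$, and then $X^{ss}_N[\omega_X]\cap\pi_G^{-1}(V)=\{(v,n\tilde\gamma(v)^{-1})\mid v\in V,\ n\in N\}$, on which $N$ acts by left translation in the second factor and $\pi_G$ is the first projection. Hence $\pi\colon X^{ss}_N[\omega_X]\to Q=Y^{ss}_G[\omega_Y]\hq G$ is a holomorphic principal $N$-bundle, in particular a geometric quotient, and $Q$ is smooth.

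I expect the main obstacle to be precisely the last step: invoking the holomorphic slice theorem with enough care (e.g.\ reducing to the Stein situation via the locally Stein property of $\pi_G$) so that freeness of the $G$-action really yields a locally trivial principal bundle and hence smoothness of $Q$. By contrast, the identification $Y^{ss}_G[\omega_Y]=G\times_N X^{ss}_N[\omega_X]$, the closedness of orbits, and the triviality of the stabilisers $N_x$ are comparatively routine once Lemma~\ref{Lem:AllOrbitsClosed} and Matsushima's theorem are in hand.
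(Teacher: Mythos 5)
Your proof is correct and follows essentially the same route as the paper: pass to $Y=G\times_N X$, use Lemma~\ref{Lem:AllOrbitsClosed} to see that all $G$-orbits in $Y^{ss}_G[\omega_Y]$ are closed so that the analytic Hilbert quotient is geometric, and combine reductivity of stabilisers of closed orbits (Matsushima) with unipotency of $N_x$ to conclude that the action is free. The only difference is that you spell out, via the holomorphic slice theorem and an explicit local trivialisation, the final step ``free $+$ geometric quotient $\Rightarrow$ principal $N$-bundle over a smooth base,'' which the paper asserts without detail; your elaboration is sound.
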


\begin{proof}
By the quotient theory for Hamiltonian actions of reductive groups, see 
Theorem~\ref{propertiesmomentumquotients}, the set of $G$-semistable points 
$Y^{ss}_G[\omega_Y] = G\acts X^{ss}_N[\omega_X]$ admits an analytic Hilbert 
quotient by the $G$-action. Moreover, by Lemma~\ref{Lem:AllOrbitsClosed}, every 
$G$-orbit in $Y^{ss}_G[\omega_Y]$ is closed there, hence the quotient 
$Y^{ss}_G[\omega_Y] \to Y^{ss}_G[\omega_Y]\hq G$ is in fact geometric. By 
construction of the twisted product, the restriction to $X^{ss}_N[\omega_X] 
\subset Y^{ss}_G[\omega_Y]$ yields the desired geometric quotient $\pi$.  

For 
every $x\in X^{ss}_N[\omega_X]$, the $G$-orbit is closed, hence the isotropy 
subgroup $G_x$ is reductive. On the other hand, as $x \in X$, the isotropy is 
contained in $N$, and hence unipotent. It follows that $G_x=N_x = \{e\}$, and 
hence that $\pi$ is a principal $N$-fibre bundle.
\qed
\end{proof}

\subsection{Compactifications of the quotient}

We will establish the existence of natural compactifications of the quotient 
$X^{ss}_N[\omega_X] / N$, \emph{which we assume to be non-empty in this section}.

Recall that the fundamental construction of Section~\ref{subsubsectio:def_of_ss} 
involves the choice of an embedding of ${G/N}$ into a Hermitian 
$K$-representation $V$ as a $G$-orbit $G\acts v$, see 
Equation~\eqref{eq:definition_of_omega_1}. This leads to an affine completion 
$\ol{G/N}^{\mathrm{a}} := \ol{G\acts v}$ of $G/N$ to which both the K\"ahler 
form and the moment map extend. Consider the composition $\Phi \colon Y 
\hookrightarrow \ol{G/N}^{\mathrm{a}} \times Z$ of the open embedding 
$G/N \times Z \hookrightarrow \ol{G/N}^{\mathrm{a}} \times Z$ with the embedding 
\eqref{eq:untwisting_the_action} used in the main construction.

\begin{prop}\label{prop:compactI}
The inclusion $Y = G\times_N X  \overset{\Phi}{\hookrightarrow} 
\overline{G/N}^{\mathrm{a}}\times Z$ induces an open embedding \[\phi\colon 
X^{ss}_N[\omega_X] / N \hookrightarrow \overline{Q}\] of $X^{ss}_N[\omega_X] / 
N= Q$ into a compact complex space $\overline{Q}$ such that $\overline{Q} 
\setminus \phi( Q )$ is analytic and nowhere dense.
\end{prop}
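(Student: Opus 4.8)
The plan is to realise $\ol Q$ inside the analytic Hilbert quotient attached to the compactifying space $W:=\ol{G/N}^{\mathrm a}\times Z$, equipped with the K\"ahler form $\omega_W:=\omega_V\oplus\omega_Z$ and the associated moment map $\mu_W:=\mu_V+\mu_Z\colon W\to\lie{k}^*$ for the $K$-action. The crucial first point is that $\mu_W$ is \emph{proper}: by Theorem~\ref{Thm:MomentImage} the map $\mu_V$ is already proper on $\ol{G/N}^{\mathrm a}=\ol{G\acts v}$, and since $Z$ is compact $\mu_Z$ is bounded, so for compact $C\subset\lie{k}^*$ the closed set $\mu_W^{-1}(C)$ sits inside the compact set $\mu_V^{-1}\bigl(\,\ol{C-\mu_Z(Z)}\,\bigr)\times Z$. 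Applying Theorem~\ref{propertiesmomentumquotients} to the (possibly singular) Hamiltonian $G$-space $W$ then gives the open $G$-invariant set $W^{ss}:=W^{ss}_G(\mu_W)$, the analytic Hilbert quotient $\pi_W\colon W^{ss}\to W^{ss}\hq G$, and — via the homeomorphism $W^{ss}\hq G\cong\mu_W^{-1}(0)/K$ together with properness of $\mu_W$ — the fact that $W^{ss}\hq G$ is \emph{compact}.

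Next I record how $Q$ sits inside this picture. Recall that $\Phi\colon Y\hookrightarrow W$ is the $G$-equivariant proper embedding $\wh\iota\colon Y\hookrightarrow(G/N)\times Z$ followed by the open embedding $(G/N)\times Z\hookrightarrow W$, that $\Phi^*\omega_W=\omega_Y$ and $\Phi^*\mu_W=\mu_Y$, that $\Phi|_X=\ol\iota$, and that $\Phi(Y)=G\acts\ol\iota(X)$ is \emph{closed} in $(G/N)\times Z$. Corollary~\ref{cor:completely_stable_gleich_semistable} and Lemma~\ref{Lem:AllOrbitsClosed} give $\Phi(Y^{ss}_G[\omega_Y])\subset W^{ss}$ and the fact that every $G$-orbit in $\Phi(Y^{ss}_G[\omega_Y])$ meets $\mu_W^{-1}(0)$, hence is \emph{closed} in $W^{ss}$. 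Let $\ol Y\subseteq W$ be the closure of $\Phi(Y)$; this is a $G$-invariant \emph{analytic} subset (the closure of the $G$-orbit of the compact analytic set $\ol\iota(X)$, which is analytic by the theory of meromorphic actions, cf.~Section~\ref{Section:mero}), it is irreducible because $Y=G\times_NX$ is, and $\ol Y\cap\bigl((G/N)\times Z\bigr)=\Phi(Y)$, whence $\ol Y\setminus\Phi(Y)=\ol Y\cap(\partial\times Z)$ with $\partial:=\ol{G/N}^{\mathrm a}\setminus G/N$. I set
\[ \ol Q:=\pi_W(\ol Y\cap W^{ss}),\qquad B:=\pi_W\bigl(\ol Y\cap W^{ss}\cap(\partial\times Z)\bigr). \]
By the behaviour of analytic Hilbert quotients under passage to $G$-invariant analytic subsets (Section~\ref{subsect:aHq_properties}), $\ol Q$ and $B\subset\ol Q$ are analytic subspaces of $W^{ss}\hq G$, so $\ol Q$ is a compact complex space and $\pi_W\colon\ol Y\cap W^{ss}\to\ol Q$ is itself an analytic Hilbert quotient; and $\phi\colon Q\to\ol Q$, $[x]_N\mapsto\pi_W(\ol\iota(x))$, is injective and holomorphic, being induced from the geometric quotient $Y^{ss}_G[\omega_Y]\to Q$ of Theorem~\ref{thm:existence_of_geometric_quotient} via the $G$-equivariant biholomorphism $\Phi$ (distinct closed $G$-orbits in $W^{ss}$ have distinct $\pi_W$-images).

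The core of the argument is then to prove $\phi(Q)=\ol Q\setminus B$, which yields both openness of $\phi(Q)$ and analyticity of the complement. Given $q\in\ol Q$, the fibre $\pi_W^{-1}(q)$ contains a unique closed $G$-orbit, which meets $\mu_W^{-1}(0)$ and lies in $\ol Y$; since $\Phi(Y)$ and $\partial\times Z$ are disjoint and both $G$-invariant, this closed orbit lies \emph{either} entirely in $\Phi(Y)$ — then it equals $G\acts\Phi(y_0)$ with $\mu_Y(y_0)=0$, so $y_0\in Y^{ss}_G[\omega_Y]$ and $q\in\phi(Q)$ — \emph{or} entirely in $\ol Y\cap(\partial\times Z)$, in which case $q\in B$; the two cases are mutually exclusive. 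As $\ol Y\cap(\partial\times Z)\cap W^{ss}$ is closed and $G$-invariant in $W^{ss}$, the set of points whose closed orbit lies in it equals $\pi_W^{-1}(B)$, so $\phi(Q)=\ol Q\setminus B$. Restricting $\pi_W$ over $\phi(Q)$ one checks, using that $\Phi$ is a closed embedding into $(G/N)\times Z$, that $\pi_W^{-1}(\phi(Q))\cap\ol Y=\Phi(Y^{ss}_G[\omega_Y])$; hence the restricted analytic Hilbert quotient is identified via $\Phi$ with $Y^{ss}_G[\omega_Y]\to Q$, so $\phi$ is a biholomorphism onto the open set $\phi(Q)=\ol Q\setminus B$, i.e.\ an open embedding. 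Finally $\ol Q$ is irreducible (image of the irreducible $\ol Y\cap W^{ss}$) and $B\subsetneq\ol Q$ because $Q\neq\emptyset$, so $B$ is nowhere dense.

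I expect the main obstacle to be exactly this last identification together with upgrading $\phi$ from an injective holomorphic map to an embedding: everything there hinges on controlling which $G$-orbit closures in $W^{ss}$ meet the zero fibre of $\mu_W$, and concretely on the interplay between $\Phi(Y)$ being closed in $(G/N)\times Z$ while $\partial\times Z$ is closed and $G$-invariant in $W$. Establishing that $\mu_W$ is proper — which is where compactness of $Z$ enters — is the key technical input behind the compactness of $\ol Q$.
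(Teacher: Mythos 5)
Your proposal is correct and follows essentially the same route as the paper: properness of the moment map on $\ol{G/N}^{\mathrm{a}}\times Z$ gives compactness of the quotient, and Zariski-openness of $\Phi(Y)$ in its closure $\ol{Y}$ combined with closedness of the $G$-orbits in $Y^{ss}_G[\omega_Y]$ gives the saturated open embedding $\phi$. The only step you treat too lightly is the analyticity of the topological closure $\ol{Y}$ of the $G$-sweep $\Phi(Y)=G\acts\ol{\iota}(X)$, which is not automatic; the paper justifies it by passing to the compact K\"ahler manifold $\mathbb{P}(V\oplus\mathbb{C})\times Z$, on which the $G$-action is meromorphic, and invoking Fujiki's result on sweeps of compact analytic subsets.
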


\begin{proof}
We claim that $\Phi(Y)$ is Zariski-open in its closure. For this, we first look 
at the compactification $V \times Z \hookrightarrow \mathbb{P}(V \oplus 
\mathbb{C}) \times Z$; by slight abuse of notation, the composition of $\Phi$ 
with this embedding will also be denoted by $\Phi$. Since the $G$-action on the 
compact K\"ahler manifold $Z$ is Hamiltonian, it is meromorphic, see 
Remark~\ref{rem:reductive_equivalence}. As in addition the $G$-action on $V$ is 
algebraic, the $G$-action on the compact K\"ahler manifold $\mathbb{P}(V \oplus 
\mathbb{C}) \times Z$ is meromorphic. Secondly, we notice that $\Phi(Y) = G 
\acts (\{eU\} \times \iota(X) )$, where $\iota\colon X\hookrightarrow 
Z$ is the extension map; i.e., $\Phi(Y)$ is the $G$-sweep of a compact complex 
submanifold of $\mathbb{P}(V \oplus \mathbb{C}) \times Z$. It therefore follows 
from \cite[Lemma 2.4(1)]{Fuj} that $\Phi(Y)$ is Zariski-open in its closure 
inside $\mathbb{P}(V \oplus \mathbb{C}) \times Z$, and hence it is Zariski-open 
in its closure inside $\ol{G/N}^{\mathrm{a}} \times Z$. We denote this closure 
by $\ol{Y}$.

By Theorem~\ref{Thm:MomentImage} the moment map $\mu_V$ is proper on 
$\ol{G/N}^{\mathrm{a}}$. It follows that the moment map for the action of $K$ on 
$\ol{G/N}^{\mathrm{a}} \times Z$ and hence the restriction $\mu_{\ol{Y}}\colon 
\ol{Y} \to \mathfrak{k}^*$ of this moment map to the analytic subset $\ol{Y} 
\subset \ol{G/N}^{\mathrm{a}} \times Z$ is likewise proper. Recalling the 
construction of the K\"ahler form $\omega_Y$ and the associated moment map 
$\mu_Y$, we can summarise the situation in the following commutative diagram:
\[\begin{xymatrix}{
   Y   \ar@{^(->}[r] \ar[rd]_{\mu_{Y}}&  \ol{Y} 
\ar[d]^{\mu_{\ol{Y}}}\ar@{^(->}[r] & \ol{G/N}^{\mathrm{a}} \times Z 
\ar@{^(->}[r] \ar[d] & V \times Z  \ar[ld]^{\mu_V + \mu_Z}\\
		 &         \mathfrak{k}^*          \ar[r]^=      &		
		 \mathfrak{k}^*	. &
}
  \end{xymatrix}\]
Here, in the first line, the first inclusion is open and the other two 
inclusions are closed. 

Since $\mu_{\ol{Y}}$ is proper, its zero fibre is compact, and hence the 
associated analytic Hilbert quotient $\ol{Y}^{ss}_G(\mu_{\ol{Y}}) \hq G \simeq 
\mu^{-1}_{\ol{Y}}(0) / K$ is a compact complex space, which in fact comes with a 
natural closed embedding into the (non-compact) analytic Hilbert quotient $(V 
\times Z)^{ss}_G(\mu_V + \mu_Z)\hq G$. As the inclusion $Y \hookrightarrow 
\ol{Y}$ has Zariski-open image, and as every $G$-orbit in $Y^{ss}_G(\mu_Y) = 
Y^{ss}_G[\omega_Y]$ is closed by the argument in the proof of Theorem 
\ref{thm:existence_of_geometric_quotient}, the inclusion $Y^{ss}_G[\omega_Y] 
\hookrightarrow \ol{Y}^{ss}_G(\mu_{\ol{Y}})$ is Zariski-open and saturated with 
respect to the quotient map $\ol{\pi}\colon \ol{Y}^{ss}_G(\mu_{\ol{Y}}) \to 
\ol{Y}^{ss}_G(\mu_{\ol{Y}}) \hq G$. It therefore induces the desired 
Zariski-open embedding \[\phi\colon Q=X^{ss}_N[\omega_X]/N \cong 
Y^{ss}_G[\omega_Y]/G \hookrightarrow \ol{Y}^{ss}_G(\mu_{\ol{Y}})\hq G =: \ol{Q}  
\]
into the compact complex space $\ol{Q}$.
\qed
\end{proof}

\subsection{Zariski-openness of semistable points and meromorphic extension of 
the quotient map}

While there is no general result for analyticity of the complement of the set 
of semi\-stable points in a Hamiltonian $G$-manifold with non-proper moment 
map, in our setup this can be shown by hand.

\begin{thm}\label{thm:Zopen}
Let $(X, \omega_X)$ be a compact Hamiltonian $N$-manifold. Then, the set 
$X^{ss}_N[\omega_X]$ of semi\-stable points is Zariski-open in $X$. Moreover, 
the quotient map $\pi\colon X^{ss}_N[\omega_X] \to X^{ss}_N[\omega_X] / N$ 
extends to a meromorphic map\footnote{The reader is referred to \cite[Chapter 6, 
Section 3]{Whitney} for an in depth discussion of meromorphic mappings between 
complex spaces.} $\pi\colon X \dasharrow \overline{Q}$ to the compact complex 
space $\overline{Q}$ constructed in the proof of 
Proposition~\ref{prop:compactI}. 
\end{thm}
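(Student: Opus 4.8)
The plan is to exploit the compactification $\ol{Y}$ of $Y = G\times_N X$ inside $\ol{G/N}^{\mathrm{a}}\times Z$ together with the properness of the moment map $\mu_{\ol{Y}}$ that was established in the proof of Proposition~\ref{prop:compactI}. First I would recall that $\ol{Y}^{ss}_G(\mu_{\ol{Y}})$ is Zariski-open in $\ol{Y}$: this is an instance of \cite[Proposition~7.6]{PaHq} (cited already in the proof of Proposition~\ref{prop:comparison_mu_GIT}), applicable because $\mu_{\ol{Y}}$ is proper and $\ol{Y}$ is a (compact) analytic subset on which $G$ acts meromorphically, being the $G$-sweep of a compact submanifold of $\mathbb{P}(V\oplus\mathbb{C})\times Z$. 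Since $Y\hookrightarrow\ol{Y}$ has Zariski-open image and $Y^{ss}_G[\omega_Y] = Y\cap\ol{Y}^{ss}_G(\mu_{\ol{Y}})$ (which follows, as in the proof of Corollary~\ref{cor:completely_stable_gleich_semistable}, from the fact that all $G$-orbits in $Y^{ss}_G[\omega_Y]$ are already closed and hence are not affected by passing to the larger semistable set), we deduce that $Y^{ss}_G[\omega_Y]$ is Zariski-open in $\ol{Y}$, hence in $Y$. Intersecting with the analytic subset $X = \{eN\}\times X \hookrightarrow Y$ and using $X^{ss}_N[\omega_X] = X\cap Y^{ss}_G[\omega_Y]$ gives that $X^{ss}_N[\omega_X]$ is Zariski-open in $X$.

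For the meromorphic extension of $\pi$, the idea is to first extend the $G$-quotient map $\ol{\pi}\colon\ol{Y}^{ss}_G(\mu_{\ol{Y}})\to\ol{Q}$ to a meromorphic map $\ol{Y}\dasharrow\ol{Q}$, and then restrict along $X\hookrightarrow\ol{Y}$. Since $\ol{Y}$ is a compact complex space of class $\mathcal{C}$ on which $G$ acts meromorphically, the Fujiki/Barlet machinery of Section~\ref{Section:mero}—in particular Proposition~\ref{Chow}—provides a meromorphic quotient $\pi_F\colon\ol{Y}\dasharrow Q_F$ whose restriction to a Fujiki set is a geometric quotient. By the universal property of the meromorphic quotient, there is then a meromorphic map $Q_F\dasharrow\ol{Q}$ through which $\ol{\pi}$ factors on the locus where everything is defined; composing, one obtains a meromorphic map $\ol\Pi\colon\ol Y\dasharrow\ol Q$ extending $\ol\pi$ on $\ol{Y}^{ss}_G(\mu_{\ol Y})$. (Concretely: both $\ol\pi$ and $\pi_F$ are $G$-invariant and agree as set-theoretic maps on the intersection of their generic loci, so their graphs have the same closure in $\ol Y\times\ol Q$, which is the graph of the desired meromorphic map.) Restricting $\ol\Pi$ to $X = \{eN\}\times X\subset\ol Y$—which is legitimate because $X$ is not contained in the indeterminacy locus of $\ol\Pi$, as $X\cap Y^{ss}_G[\omega_Y] = X^{ss}_N[\omega_X]\neq\emptyset$ by the standing assumption of the section—yields the meromorphic map $\pi\colon X\dasharrow\ol Q$, which agrees with the geometric quotient map on $X^{ss}_N[\omega_X]$ by construction.

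The main obstacle I anticipate is the bookkeeping around restricting a meromorphic map to an analytic subset: one must check that $X$ meets the domain of definition $\mathrm{dom}(\ol\Pi)$ and, more importantly, that the restriction $\ol\Pi|_X$ genuinely recovers the quotient map $\pi$ on $X^{ss}_N[\omega_X]$ rather than some nontrivial modification of it. The first point is handled by the non-emptiness hypothesis on $Q$ together with Zariski-openness of $Y^{ss}_G[\omega_Y]$ in $\ol Y$; the second follows because on $X^{ss}_N[\omega_X]$ the map $\ol\Pi$ restricts to $\ol\pi|_{X^{ss}_N[\omega_X]}$, which is exactly the geometric $N$-quotient by the construction in the proof of Theorem~\ref{thm:existence_of_geometric_quotient} and Proposition~\ref{prop:compactI}. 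A secondary technical point is verifying that $\ol Y$ really is of class $\mathcal{C}$ so that Fujiki's quotient theorem applies; this follows since $\ol Y$ is an analytic subset of the compact K\"ahler—hence class $\mathcal{C}$—manifold $\mathbb{P}(V\oplus\mathbb{C})\times Z$, and class $\mathcal{C}$ is inherited by analytic subsets.
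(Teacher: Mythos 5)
Your argument for Zariski-openness does not go through, for two independent reasons. First, the claimed equality $Y^{ss}_G[\omega_Y] = Y\cap\ol{Y}^{ss}_G(\mu_{\ol{Y}})$ is false in general: only the inclusion ``$\subseteq$'' holds, since a point of $Y$ may be $\mu_{\ol{Y}}$-semistable because its orbit closure meets $\mu_{\ol{Y}}^{-1}(0)$ only in the boundary $\ol{Y}\setminus Y$. This is exactly the phenomenon flagged in Section~\ref{subsect:completely_stable}, where it is stated that $X^{\ol{ss}}_N[\omega_X]$ is \emph{in general strictly bigger} than $X^{ss}_N[\omega_X]$; what Corollary~\ref{cor:completely_stable_gleich_semistable} actually gives is $X^{ss}_N=X^{\ol{s}}_N$, i.e.\ the intersection with the \emph{stable} locus $G\acts\mu^{-1}(0)$, which is not a priori Zariski-open (or even open) in $\ol{Y}$. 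Your appeal to closedness of orbits in $Y^{ss}_G[\omega_Y]$ says nothing about points outside that set, so it cannot supply the missing reverse inclusion. Second, \cite[Proposition~7.6]{PaHq} is a statement about \emph{algebraic} $G$-varieties; it is invoked in the paper only in the projective comparison of Proposition~\ref{prop:comparison_mu_GIT}, where the ambient space is $\ol{G/N}^{\mathrm{a}}\times\mathbb{P}(W)$. Here $Z$ is merely compact K\"ahler, so $\ol{Y}\subset\ol{G/N}^{\mathrm{a}}\times Z$ is not algebraic and the citation does not apply; the sentence preceding the theorem (``there is no general result for analyticity of the complement \dots in our setup this can be shown by hand'') is precisely a warning that no such off-the-shelf result is available. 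The paper's actual mechanism — showing that a Fujiki set $U_F$ for the $N$-action on $X$ is contained in $X^{ss}_N[\omega_X]$, deducing constructibility by Noetherian induction on $X\setminus U_F$, and combining this with Euclidean closedness of the complement — is entirely absent from your proposal.

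The second half also has a gap that is close to circular. The universal property of the Fujiki quotient of $\ol{Y}$ applies to $G$-invariant \emph{meromorphic maps defined on $\ol{Y}$}; but $\ol{\pi}$ is only holomorphic on $\ol{Y}^{ss}_G(\mu_{\ol{Y}})$, whose complement is not yet known to be analytic, so it is not yet a meromorphic map on $\ol{Y}$ — producing such an extension is part of what must be proved. (Note also that $\ol{Y}$, taken inside $\ol{G/N}^{\mathrm{a}}\times Z$ so that $\mu_{\ol{Y}}$ is proper, is \emph{non-compact}; the compact closure in $\mathbb{P}(V\oplus\mathbb{C})\times Z$ to which Fujiki's theorem would apply does not carry the extended moment map, and you conflate the two.) The paper circumvents this by running the Fujiki machinery on $X$ itself: it takes the Fujiki quotient $\pi_F\colon X\dasharrow Q_F$ of $X$ by $N$, extends its graph $G$-equivariantly to $\hat\Gamma\subset\ol{Y}\times Q_F$ via \cite[Lemma~2.4(1)]{Fuj}, pushes $\hat\Gamma\cap(\ol{Y}^{ss}_G(\mu_{\ol{Y}})\times Q_F)$ down through the analytic Hilbert quotient to obtain a \emph{bimeromorphic} comparison map $\pi_{F,\mathrm{red}}\colon\ol{Q}\dasharrow Q_F$, and defines the extension as $(\pi_{F,\mathrm{red}})^{-1}\circ\pi_F$. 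This both avoids restricting a meromorphic map on $\ol{Y}$ to the positive-codimension subvariety $X$ and simultaneously produces the inclusion $U_F\subset X^{ss}_N[\omega_X]$ on which the Zariski-openness argument rests.
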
 

\begin{proof}
By part (1) of Theorem~\ref{propertiesmomentumquotients}, $X^{ss}_N[\omega_X]$ 
is open in the Euclidean topology.  Let $\pi_F \colon X \dasharrow Q_F$ be a 
Fujiki quotient of $X$ by the $N$-action, whose existence is guaranteed by 
Proposition~\ref{Chow}, and let $\Gamma \subset X \times Q_F$ be the graph. In 
particular, $\Gamma$ is an $N$-invariant, irreducible, compact analytic subset 
of $X \times Q_F$, where $N$ acts only on the first factor. Embedding $X \times 
Q_F$ into $Y\times Q_F$ and further into $\mathbb{P}(V \oplus \mathbb{C}) \times 
Z \times Q_F$ as in the proof of Proposition~\ref{prop:compactI} we can 
interpret $\Gamma$ as an $N$-invariant subvariety in $\mathbb{P}(V \oplus 
\mathbb{C}) \times Z \times Q_F$. Using again that the $G$-action on the latter 
space is meromorphic, we conclude that $\hat{\Gamma}:= \ol{G \acts \Gamma}$ 
is Zariski-open in its closure in $\ol{Y} \times Q_F$, and in particular 
irreducible. On $Y \subset \ol{Y}$ it is the graph of the $G$-invariant 
extension of the $N$-invariant meromorphic map $\pi_F$ from $X$ to $Y = G 
\times_N X$. It follows that $\hat{\Gamma}$ is the graph of a $G$-invariant 
meromorphic map from $\ol{Y}$ to $Q_F$, which we will call $\hat{\pi}_F$. 
The graph of the restriction of $\hat{\pi}_F$ to 
$\ol{Y}^{ss}_G(\mu_{\ol{Y}})$ is equal to $\hat{\Gamma}^{\circ} := 
\hat{\Gamma} \cap (\ol{Y}^{ss}_G(\mu_{\ol{Y}}) \times Q_F)$. Now, 
$\ol{Y}^{ss}_G(\mu_{\ol{Y}}) \times Q_F$ admits an analytic Hilbert quotient by 
the $G$-action, namely $\Pi = \ol{\pi} \times \id_{Q_F}\colon 
\ol{Y}^{ss}_G(\mu_{\ol{Y}}) \times Q_F  \to \overline{Q} \times Q_F$. As 
$\hat{\Gamma}^\circ$ is a $G$-invariant, irreducible analytic subset of 
$\ol{Y}^{ss}_G(\mu_{\ol{Y}}) \times Q_F $, its image 
$\hat{\Gamma}^\circ_{\rm red} = \Pi(\hat{\Gamma}^\circ)$ is an 
irreducible analytic subset of $\ol{Q} \times Q_F$ by the basic properties of 
analytic Hilbert quotients listed in Section~\ref{subsect:aHq_properties}. 
 
On the one hand, as orbits through points in $Y^{ss}_G(\mu_Y) \subset  
\ol{Y}^{ss}_G(\mu_{\ol{Y}})$ are closed, $\hat{\Gamma}^\circ_{\rm red}$ 
defines a meromorphic map $\pi_{F, \rm red}$ from $\ol{Q}$ to $Q_F$, cf.~the 
argument given in the proof of \cite[Proposition~4.5]{PaHq}. 
On the other hand, 
consider the open subset $U := U_F \cap X^{ss}_N[\omega_X]$, 
cf.~Proposition~\ref{Chow}. As this set is $N$-invariant, and since both 
$\pi_F|_U$ and $\pi|_U$ are geometric quotients for the $N$-action on $U$ by 
Proposition~\ref{Chow} and Theorem~\ref{thm:existence_of_geometric_quotient}, 
respectively, the respective images $\pi_F(U) \subset Q_F$ and $\pi(U) \subset Q 
\subset \ol{Q}$ are biholomorphic via $\pi_{F, \rm red}$. It follows that 
$\pi_{F, \rm red}\colon \ol{Q} \dasharrow Q_F$ is bimeromorphic. From this, we 
conclude that $(\pi_{F, \rm red})^{-1} \circ \pi_F \colon X \dasharrow \ol{Q}$ 
is a meromorphic extension of $\pi$ and that there are Zariski-open, dense 
subsets $\overline{\Omega} \subset \ol{Q}$ and $\Omega_F \subset Q_F$ that are 
biholomorphic via $\pi_{F, \rm red}$. By shrinking $U_F$ if necessary, we may 
assume that $\Omega_F = \pi_F(U_F)$ and $\ol{\Omega} \subset Q \subset \ol{Q}$.
The situation is hence summarised by the following commutative diagram
 \[\begin{xymatrix}
    { U_F \ar@{^(->}[r]\ar@{->>}[d]_{\pi_F} & X^{ss}_N[\omega_X] \ar@{->>}[d]^{\pi}&   \\
      \Omega_F \ar@{^(->}[r]^>>>>>>{(\pi_{F, \rm red})^{-1}}&  Q  \ar@{^(->}[r] & \ol{Q}.   }
   \end{xymatrix}
\]
In particular, the Zariski-open subset $U_F$ is contained in 
$X^{ss}_N[\omega_X]$. Since $X$ is compact, using a Noetherian induction 
argument applied to the analytic subset $X' :=X \setminus U_F$ we conclude that 
$X^{ss}_N[\omega_X]$ and hence $X\setminus X^{ss}_N[\omega_X]$ is constructible 
in the Zariski-topology of $X$. As we know from the start that $X\setminus 
X^{ss}_N[\omega_X]$ is closed in the Euclidean topology of $X$, the claim 
follows.
\qed
\end{proof}

\subsection{Reduced K\"ahler structure on the quotient}

We will show that using a symplectic reduction procedure the quotient 
$X^{ss}_N[\omega_X] / N$ can be endowed with a K\"ahler form naturally induced 
from $\omega_X$. This form extends to the compactification $\ol{Q}$ and its 
class pulls back under $\pi$ to the class of $[\omega_X]$ on 
$X^{ss}_N[\omega_X]$.

\begin{thm}\label{thm:reducedKaehler}
In the setup of Proposition~\ref{prop:compactI}, there exists a K\"ahler 
structure\footnote{See \cite[Sections 3.1 and 3.2]{PaHq} for the basic 
definitions regarding K\"ahler structures on (singular) complex spaces.} 
$\omega_{\ol{Q}}$ on the compact complex space $\ol{Q}$ whose restriction 
$\omega_Q = \omega_{\ol{Q}}|_Q^{}$ to $Q \hookrightarrow \ol{Q}$ is smooth and 
fulfils
 \[[\pi^*{\omega_Q}] = [\omega_X|_{X^{ss}_N[\omega_X]}] \in 
H^2(X^{ss}_N[\omega_X],\, \mathbb{R}).\]
\end{thm}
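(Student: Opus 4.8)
The plan is to produce $\omega_{\ol Q}$ from the reductive quotient theory, then separately check smoothness on $Q$ and the cohomology identity. By Proposition~\ref{prop:compactI} we have $\ol Q = \ol Y^{ss}_G(\mu_{\ol Y})\hq G$, where $\ol Y$ is the analytic subset of $\ol{G/N}^{\mathrm a}\times Z$ carrying the restriction $\omega_{\ol Y}$ of $\wh\omega_Z=\omega_V\oplus\omega_Z$ and the (proper) restricted moment map $\mu_{\ol Y}$. Applying part~(3) of Theorem~\ref{propertiesmomentumquotients} to the Hamiltonian $G$-space $\ol Y$ gives a K\"ahler structure $\omega_{\ol Q}$ on $\ol Q$ induced by symplectic reduction from $\omega_{\ol Y}$ and smooth along the natural stratification of $\ol Q$. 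Since, as established in the proof of Proposition~\ref{prop:compactI}, the inclusion $Y^{ss}_G[\omega_Y]\hookrightarrow\ol Y^{ss}_G(\mu_{\ol Y})$ is Zariski-open and $G$-saturated, and since $\omega_{\ol Y}$ restricts on $\Phi(Y)\cong Y$ to $\omega_Y$, locality of the reduction construction over the quotient shows that $\omega_Q:=\omega_{\ol Q}|_Q^{}$ is exactly the K\"ahler form obtained by reducing $\omega_Y$ on $Y^{ss}_G[\omega_Y]$. On this set the $G$-action is free and all orbits are closed (Lemma~\ref{Lem:AllOrbitsClosed} together with the proof of Theorem~\ref{thm:existence_of_geometric_quotient}); hence $K$ acts freely on $\mu_Y^{-1}(0)$, so by \eqref{eq:magic_formula} the value $0$ is a regular value of $\mu_Y$ there, classical K\"ahler reduction applies, and $Q=\mu_Y^{-1}(0)/K$ is a complex manifold on which $\omega_Q$ is a smooth K\"ahler form, consistently with $Q$ lying in the (open, free) principal stratum of $\ol Q$.

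For the cohomology class it suffices to work on $Y^{ss}_G[\omega_Y]$. Write $j\colon\mu_Y^{-1}(0)\hookrightarrow Y^{ss}_G[\omega_Y]$, $p\colon\mu_Y^{-1}(0)\to Q$, and $\pi_G\colon Y^{ss}_G[\omega_Y]\to Q$ for the $G$-quotient, so $\pi_G\circ j=p$. From the defining property of the reduced form we have the equality of $2$-forms $p^*\omega_Q=j^*\omega_Y$, whence $j^*\bigl(\pi_G^*\omega_Q-\omega_Y\bigr)=p^*\omega_Q-j^*\omega_Y=0$. Next I claim that $j$ is a homotopy equivalence. Indeed, each $G$-orbit in $Y^{ss}_G[\omega_Y]$ is closed, hence meets $\mu_Y^{-1}(0)$ (by semistability), and by part~(2) of Theorem~\ref{propertiesmomentumquotients} the composite $\mu_Y^{-1}(0)\hookrightarrow Y^{ss}_G[\omega_Y]\to Y^{ss}_G[\omega_Y]\hq G$ induces a homeomorphism $\mu_Y^{-1}(0)/K\simeq Y^{ss}_G[\omega_Y]\hq G$, so each orbit meets $\mu_Y^{-1}(0)$ in a single $K$-orbit; together with freeness of the $G$-action this shows that the natural map $G\times_K\mu_Y^{-1}(0)\to Y^{ss}_G[\omega_Y]$, $[g,m]\mapsto g\acts m$, is a $G$-equivariant homeomorphism. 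Composing the $(K\times K)$-equivariant deformation retraction of $G$ onto $K$ coming from the Cartan decomposition with this identification retracts $Y^{ss}_G[\omega_Y]$ onto $\mu_Y^{-1}(0)$. Consequently $j^*\colon H^2(Y^{ss}_G[\omega_Y],\,\mbb R)\to H^2(\mu_Y^{-1}(0),\,\mbb R)$ is injective, and the vanishing above yields $[\pi_G^*\omega_Q]=[\omega_Y|_{Y^{ss}_G[\omega_Y]}]$.

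Finally, let $\iota_X\colon X^{ss}_N[\omega_X]\hookrightarrow Y^{ss}_G[\omega_Y]$ be the inclusion as the fibre over $eN$, so that $\pi=\pi_G\circ\iota_X$. Pulling the previous identity back along $\iota_X$ gives $[\pi^*\omega_Q]=\iota_X^*[\pi_G^*\omega_Q]=\iota_X^*[\omega_Y]=[\omega_X|_{X^{ss}_N[\omega_X]}]$, where the last equality uses that $[\omega_Y]$ extends $[\omega_X]$ to $Y$, as noted in Section~\ref{subsubsectio:def_of_ss}. The main obstacle is the second paragraph: one must carefully justify, in the singular analytic setting, both that the reduced form on the free locus is smooth with $p^*\omega_Q=j^*\omega_Y$ and that $Y^{ss}_G[\omega_Y]\simeq G\times_K\mu_Y^{-1}(0)$, which is what makes $j^*$ injective on $H^2$; once these are in place the rest is formal.
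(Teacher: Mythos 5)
Your proposal is correct and follows the same overall architecture as the paper's proof: reduce $\wh{\omega}_Z$ on $\ol{Y}^{ss}_G(\mu_{\ol{Y}})$ to obtain $\omega_{\ol{Q}}$, use that $Y^{ss}_G[\omega_Y]$ is a smooth, $\ol{\pi}$-saturated locus on which $G$ acts freely to get smoothness of $\omega_Q$ together with the on-the-nose reduction identity $p^*\omega_Q=j^*\omega_Y$ (this is exactly Equation~\eqref{eq:symp_red} in the paper, extracted from the construction in \cite{Extensionofsymplectic}), and then push the class identity down to $X^{ss}_N[\omega_X]$. Where you genuinely diverge is the key cohomological step, namely why $j^*\colon H^2(Y^{ss}_G[\omega_Y],\,\mbb{R})\to H^2(\mu_Y^{-1}(0),\,\mbb{R})$ is injective. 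The paper deduces this from \emph{admissibility} of $\mu_Y$ — obtained from properness of $\mu_{\ol{Y}}$ (Theorem~\ref{Thm:MomentImage}) plus saturatedness — which yields the retraction $z\mapsto\lim_{t\to\infty}F_t(z)$ along the gradient flow of $-\norm{\mu_Y}^2$. You instead exploit the special features of the unipotent setting (all semistable $G$-orbits closed, action free) to identify $Y^{ss}_G[\omega_Y]$ with $G\times_K\mu_Y^{-1}(0)$ and retract via the Cartan decomposition; this avoids the gradient-flow machinery and is more elementary, at the price of being confined to the free, closed-orbit case, whereas the paper's argument is the one that generalises. One point to tighten: a continuous $G$-equivariant bijection $G\times_K\mu_Y^{-1}(0)\to Y^{ss}_G[\omega_Y]$ is not automatically a homeomorphism; here it is, because $0$ is a regular value on the free locus so both sides are manifolds of equal dimension and invariance of domain (or the standard immersion computation along $\mu_Y^{-1}(0)$) applies — you should say this. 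Finally, your last step goes directly via $\iota_X^*[\omega_Y]=[\omega_X]$, short-circuiting the paper's intermediate reduction $[\omega_Y|_{Y^{ss}_G[\omega_Y]}]=[\psi^*(\mathrm{pr}_Z^*\omega_Z)]$ using exactness of $\omega_V$; the two are equivalent since the composite $X\to Y\to G/N\times Z$ is constant equal to $eN$ in the first factor.
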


\begin{proof}
Once again, recall our setup in the following commutative diagram
 \begin{equation}\label{eq:big_diagram}
 \begin{gathered}
\begin{xymatrix}{
  X^{ss}_N[\omega_X] \ar@{^(->}[r] \ar@{->>}[d]_\pi &  Y^{ss}_G[\omega_Y]
\ar@{->>}[d]  \ar@(ur,ul)[rrr]^\psi \ar@{^(->}[r] & 
\ol{Y}^{ss}_G[\hat{\omega}_{Z}] \ar@{^(->}[r] \ar@{->>}[d]^{\ol{\pi}} &
\ol{G/N}^{\mathrm{a}} \times Z \ar@{^(->}[r]& V \times Z \ar[d]^{\mathrm{pr}_Z}
\\
  Q \ar[r]^<<<<<<<\cong& Y^{ss}_G[\omega_Y]\hq G \ar@{^(->}[r]  & \ol{Q} & & Z.
}
  \end{xymatrix}
 \end{gathered}
 \end{equation}
By applying the K\"ahlerian reduction procedure of \cite{Extensionofsymplectic} 
to $\ol{Y}^{ss}_G[\hat{\omega}_{Z}]$ and to the quotient $\ol{Q}$, we obtain 
a K\"ahlerian structure $\omega_{\ol{Q}}$ on $\ol{Q}$ induced by restricting 
local $K$-invariant potentials of $\hat{\omega}_Z$ to $\mu_{\ol{Y}}^{-1}(0)$ 
and by the homeomorphism $\mu_{\ol{Y}}^{-1}(0)/K \simeq \ol{Q}$, 
cf.~Theorem~\ref{propertiesmomentumquotients}. We denote the restriction of 
$\omega_{\ol{Q}}$ to $Q$ by $\omega_Q$. 

In order to show that $\omega_Q$ is smooth, we first note that 
$Y^{ss}_G[\omega_Y]  \subset \ol{Y}^{ss}_G[\hat{\omega}_{Z}]$ is smooth and 
$\ol{\pi}$-saturated, and secondly recall the observation made in the proof of 
Theorem~\ref{thm:existence_of_geometric_quotient} above that the $G$-action on 
$Y^{ss}_G[\omega_Y]$ is free. Therefore, it follows from the construction of the 
reduced K\"ahler form $\omega_{\ol{Q}}$, see \cite[Lemma 2 on page 132 and the 
proof on pages 133/134]{Extensionofsymplectic} and also compare with 
\cite[Theorem 2.10]{Sj2}, that in the fundamental commutative diagram 
\begin{equation}\label{eq:fundamental_diagram}
\begin{gathered}
 \begin{xymatrix}{\mu_Y^{-1}(0)\ar@{->>}[d]_{\pi_K} \ar@{^(->}[r]^{\tau}&Y^{ss}_G[\omega_Y] \ar@{->>}[d]^{\ol{\pi}}\\
  \mu_Y^{-1}(0)/K \ar[r]_<<<<<{\simeq}^<<<<<{\tau_{\mathrm{red}}} & Q} 
\end{xymatrix}
\end{gathered}
\end{equation}
the fibre $\mu_Y^{-1}(0)$ is smooth, the $K$-action on $\mu_Y^{-1}(0)$ is free, 
and that the K\"ahler structure $\omega_Q$ is smooth and fulfils the 
``symplectic reduction'' equation
\begin{equation}\label{eq:symp_red}
 \tau^*(\ol{\pi}^*\omega_Q) = \pi_K^*(\tau_{\mathrm{red}}^*\omega_Q) = 
\omega_Y|_{\mu_Y^{-1}(0)} = \tau^*(\omega_Y|_{Y^{ss}_G[\omega_Y]}).
\end{equation}

More is true. Since $Y^{ss}_G[\omega_Y]$ is $\ol{\pi}$-saturated, and since the 
moment map $\mu_{\ol{Y}}\colon \ol{Y} \to \mathfrak{k}^*$ is proper as observed 
in the proof of Proposition~\ref{prop:compactI}, the moment map $\mu_Y$ is 
\emph{admissible} in the sense that the gradient flow $F_t$ of $- \|\mu_Y \|^2$ 
through any point $p \in Y^{ss}_G[\omega_Y]$ exists for all times, 
cf.~\cite[\S9]{Kir2}, and hence there exists a continuous retraction of 
$Y^{ss}_G[\omega_Y]$ to $\mu_Y^{-1}(0)$ defined by $z \mapsto \lim_{t \to 
\infty} F_t(z)$, see \cite[page 109]{Sj2} and the references given there, as 
well as \cite{Ler}. In particular, the inclusion displayed in the first line of 
Diagram~\eqref{eq:fundamental_diagram} induces an isomorphism between de Rham 
cohomology groups,
\begin{equation*}
 \tau^*\colon H^2(Y^{ss}_G[\omega_Y],\, \mathbb{R} ) 
\overset{\cong}{\longrightarrow} H^2(\mu_Y^{-1}(0), \mathbb{R} ).
\end{equation*}
Equation~\eqref{eq:symp_red} therefore implies that 
\begin{equation}\label{eq:1}
 [\ol{\pi}^*\omega_Q] = [\omega_Y|_{Y^{ss}_G[\omega_Y]}] \in 
H^2(Y^{ss}_G[\omega_Y], \, \mathbb{R} ).
\end{equation}

In addition, from the right hand part of Diagram~\eqref{eq:big_diagram}, from 
Equations~\eqref{eq:definition_of_omega_1} and \eqref{eq:definition_of_omega_2}, 
and from the the fact that the de Rham cohomology class of $\omega_V$ is trivial 
we infer that
\begin{equation*}
[\omega_Y|_{Y^{ss}_G[\omega_Y]}] = [\psi^* (\mathrm{pr}^*_Z 
(\omega_Z))] \in H^2(Y^{ss}_G[\omega_Y],\, \mathbb{R}),
\end{equation*}
so that \eqref{eq:1} becomes 
\begin{equation*}
[\ol{\pi}^*\omega_Q] =[\psi^* (\mathrm{pr}^*_Z (\omega_Z))] 
\in H^2(Y^{ss}_G[\omega_Y],\, \mathbb{R}).
\end{equation*}
Finally, using this, the left hand part of Diagram~\eqref{eq:big_diagram}, and 
the fact that $Z$ as a $G$-extension of $X$ fulfils \eqref{eq:pullback} we 
conclude 
\[[\pi^*{\omega_Q}] = [\omega_X|_{X^{ss}_N[\omega_X]}] \in 
H^2(X^{ss}_N[\omega_X],\, \mathbb{R}).\]

\vspace{-0.65cm}\qed
\end{proof}

\providecommand{\bysame}{\leavevmode\hbox to3em{\hrulefill}\thinspace}
%
%

\bibliographystyle{amsalpha}
\bibliographymark{References}
\def\cprime{$'$}

\end{document}